\documentclass[11pt]{amsart}
\usepackage{amssymb, amsmath, amsthm, mathrsfs, amsopn, bm, color}
\usepackage{amsrefs, framed}

\usepackage[a4paper, centering]{geometry}
\usepackage{bookmark} 
\usepackage{hyperref} 
\usepackage[noabbrev,capitalize]{cleveref}

\def\DM{{\mathcal{DM}^{\infty}}}
\def\DMA{{{\rm DM}^\infty}}

\newcommand*{\DMloc}{\mathcal{DM}^{\infty}_{{\rm loc}}}
\newcommand*{\DMAloc}{{\rm DM}^\infty_{{\rm loc}}}

\DeclareMathOperator{\Tr}{Tr}
\newcommand*{\Trace}[3][\pm]{\Tr^{#1}(#2, #3)}
\newcommand*{\Trp}[2]{\Trace[+]{#1}{#2}}
\newcommand*{\Trm}[2]{\Trace[-]{#1}{#2}}

\newcommand*{\jump}[1]{\Theta_{#1}}
\DeclareMathOperator{\Trv}{\bf{Tr}}
\newcommand*{\Tracev}[3][\pm]{\Trv^{#1}(#2, #3)}
\newcommand*{\Trpv}[2]{\Tracev[+]{#1}{#2}}
\newcommand*{\Trstarv}[2]{\Tracev[*]{#1}{#2}}
\newcommand*{\Trmv}[2]{\Tracev[-]{#1}{#2}}
\newcommand*{\Trpmv}[2]{\Tracev[\pm]{#1}{#2}}

\def\nuint{\widetilde{\boldsymbol{\nu}}}

\newcommand{\A}{\boldsymbol{A}}
\newcommand{\B}{\boldsymbol B}
\newcommand{\C}{\boldsymbol C}

\DeclareMathOperator{\DIV}{{\rm Div}}

\newcommand{\bfu}{{\bf u}}
\def\R{\mathbb{R}}

\def\N{\mathbb{N}}

\def\MA{{\bf A}}

\DeclareMathOperator{\Div}{div}

\newcommand{\medint}{-\kern  -,375cm\int}
\newcommand{\medintinrigo}{-\kern  -,315cm\int}

\newcommand{\eps}{\varepsilon}

\newcommand{\LLN}{{\mathcal L}^N}

\newcommand{\res}{\mathop{\hbox{\vrule height 7pt width .5pt depth 0pt
\vrule height .5pt width 6pt depth 0pt}}\nolimits} % macro per la restrizione
\def\pscal#1#2{\left\langle #1\,,\, #2 \right\rangle}

\newcommand{\ban}[1]{\left\langle  #1 \right\rangle}  % <  > 

\newcommand{\Haus}[1]{{\mathscr H}^{#1}} % Misura di Hausdorff
\newcommand{\Leb}[1]{{\mathscr L}^{#1}} % Misura di Lebesgue

\newcommand{\BorelN}{\mathscr{B}(\Omega;\R)}

\newtheorem{definition}{Definition}[section]

\newtheorem{lemma}[definition]{Lemma}

\newtheorem{theorem}[definition]{Theorem}

\newtheorem{proposition}[definition]{Proposition}

\newtheorem{corollary}[definition]{Corollary}
\theoremstyle{remark}
\newtheorem{remark}[definition]{Remark}

\makeatletter
\def\@settitle{\begin{center}%
		\baselineskip14\p@\relax
		\bfseries
		\uppercasenonmath\@title
		\@title
		\ifx\@subtitle\@empty\else
		\\[5ex]%\@subtitle
		\normalsize\mdseries\@subtitle
		\fi
	\end{center}%
}
\def\subtitle#1{\gdef\@subtitle{#1}}
\def\@subtitle{}
\makeatother

\numberwithin{equation}{section}

\begin{document}
\title[Gauss--Green formulas for divergence measure tensor fields]
{Gauss--Green formulas for divergence measure \\ tensor fields on rough domains}

\author[V.~De Cicco]{Virginia De Cicco}
\address{Department of Basic and Applied Sciences for Engineering, Sapienza University of Rome \\
	Via A.\ Scarpa 10 -- I-00185 Rome (Italy)}
\email{virginia.decicco@uniroma1.it}
\author[G.~Scilla]{Giovanni Scilla}
\address{Department of Mathematics and Applications ``R. Caccioppoli'', University of Naples Federico II\\
	Via Cintia, Monte Sant'Angelo -- I-80126 Naples (Italy)}
\email{giovanni.scilla@unina.it}

\thanks{\textit{Acknowledgments}.
The authors are members of  the Istituto Nazionale di Alta Matematica (INdAM), Gruppo Nazionale per l'Analisi Matematica, la Probabilità e le loro Applicazioni (GNAMPA)}

\keywords{Pairing; divergence-measure tensor fields; Gauss--Green formula}
\subjclass[2020]{28B05,46G10,26B30}
\maketitle

\begin{abstract}
We introduce a notion of pairing between essentially bounded tensor fields with divergence measure and vector-valued functions of bounded variation, extending the classical theory to the tensorial setting. This naturally leads to an adaptation of the definition of normal trace for tensor fields with measure divergence even on a rectifiable set.
As a consequence, we establish tensorial Gauss–Green formulas that remain valid on sets with low regularity, including sets of finite perimeter. These results yield a unified and robust framework for integration by parts in the presence of irregular tensor fields and domains. 
\end{abstract}

\section{Introduction}

In continuum mechanics and in many problems of mathematical analysis, the following classical Divergence Theorem and the Gauss–Green formula in tensor form play a fundamental role (see, e.g., \cite{Goodbody, Gurtin1973}).
\begin{theorem}[Divergence Theorem in tensor form]\label{diverg}
Let $\Omega$ be a bounded open subset of $\R^N$ with smooth boundary. Let ${\MA}=(A_{ij})\in C^1(\Omega;\mathbb{R}^{N\times N})$, i.e. ${\MA}$ is a second-order regular tensor field. Then
\begin{equation*}
\int_{\Omega} \DIV{\MA} \,dV= \int_{\partial \Omega} {\MA}\, {\bf n}\, dS\,,
\end{equation*}
where 
\begin{equation*}\label{lllll}
\DIV{\MA} = \left(\sum_{j=1}^N \partial_jA_{ij}\right)_{i=1,\dots,N}\,, \quad \mbox{ and }\quad {\MA}\, {\bf n} =\left(\sum_{j=1}^NA_{ij}n_j\right)_{i=1,\dots,N}\,,
\end{equation*}
$\mathbb{R}^{N\times N}$ denotes the set of $N\times N$ matrices, ${\bf n}=(n_1,\dots,n_N)$ is the exterior normal at the boundary $\partial \Omega$ of $\Omega$, $dV$ is the volume element on $\Omega$ and $dS$ is the area element on $\partial\Omega$. 
\end{theorem}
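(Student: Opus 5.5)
The plan is to reduce the tensorial statement to the classical scalar Divergence Theorem, applied row by row. For each fixed index $i\in\{1,\dots,N\}$, consider the vector field $\mathbf{a}_i:=(A_{i1},\dots,A_{iN})$, i.e.\ the $i$-th row of $\MA$. By hypothesis $\mathbf{a}_i\in C^1(\Omega;\R^N)$. Its scalar divergence is $\Div \mathbf{a}_i=\sum_{j=1}^N \partial_j A_{ij}$, which is exactly the $i$-th component of $\DIV\MA$. Its normal flux is $\mathbf{a}_i\cdot{\bf n}=\sum_{j=1}^N A_{ij}n_j$, which is exactly the $i$-th component of $\MA\,{\bf n}$. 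So the vector identity in the statement is equivalent to the $N$ scalar identities
\begin{equation*}
\int_\Omega \Div \mathbf{a}_i\,dV=\int_{\partial\Omega}\mathbf{a}_i\cdot{\bf n}\,dS,\qquad i=1,\dots,N,
\end{equation*}
because integrals of vector-valued functions are computed componentwise.

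Each of these identities is the classical Gauss--Green (divergence) theorem for a $C^1$ vector field on a bounded open set with smooth boundary. If one wants to go one level deeper, it follows in turn from the one-dimensional fundamental theorem of calculus applied to each $\partial_j A_{ij}$. The route is the standard one: a partition of unity subordinate to a cover of $\overline\Omega$ by interior balls and boundary charts; in each chart $\partial\Omega$ is a graph, so one can flatten it or integrate along the coordinate direction transversal to the graph; finally one sums over $j$ and over the pieces of the partition.

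The only genuine point to check is regularity up to the boundary. The statement assumes only $\MA\in C^1(\Omega)$, but for the boundary integral to make sense, and for the theorem to hold, one needs $\MA\in C^1(\overline\Omega)$, or at least $\MA\in C(\overline\Omega)$ with $\DIV\MA\in L^1(\Omega)$. I would read the hypothesis in this sense. Under the weaker version, I would approximate $\Omega$ from inside by the smooth sets $\Omega_\eps=\{x\in\Omega:\ \mathrm{dist}(x,\partial\Omega)>\eps\}$, apply the identity on each $\Omega_\eps$, and pass to the limit $\eps\to0$ using continuity of $\MA$ up to $\partial\Omega$ and dominated convergence for the volume term. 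I expect this limiting step to be the main obstacle; the algebraic reduction to the scalar case is immediate.
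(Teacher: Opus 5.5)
Your proof is correct and takes essentially the same route as the source: the paper gives no proof of its own and cites \cite[Section 6(1)]{Gurtin1973}, where the tensor identity is derived from the scalar divergence theorem applied to $\MA^T{\bf a}$ for an arbitrary constant vector ${\bf a}$, and choosing ${\bf a}={\bf e}_i$ gives exactly your row-by-row reduction. Your caveat is also right: the hypothesis has to be read as $\MA\in C^1(\overline\Omega;\R^{N\times N})$, or as continuity up to $\partial\Omega$ with integrable $\DIV\MA$, which your inner parallel sets $\Omega_\eps$ handle, because with $\MA\in C^1(\Omega)$ alone the boundary integral need not be defined.
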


The tensor ﬁeld $\MA$ is commonly interpreted as the {\it {Cauchy stress tensor}}.

\begin{corollary}[Gauss--Green formula in tensor form]\label{gaussclassica}
Let ${\MA}=(A_{ij})\in C^1(\Omega;\mathbb{R}^{N\times N})$ and $\bfu\in C^1(\Omega;\R^N)$. Then the following pointwise identity holds:
\begin{equation}\label{bbbb}
\bfu \cdot \DIV{\MA}+{\MA}:\nabla\bfu=  \Div({\MA}^T\bfu)\,,\ \ \mbox{ in } \Omega\,.
\end{equation}
Moreover, we have
\begin{equation}
\int_{\Omega} \bfu \cdot \DIV{\MA} \,dV+ \int_{\Omega}{\MA}:\nabla\bfu \,dV = \int_{\partial \Omega} \bfu\cdot({\MA}\, {\bf n})\, dS\,.
\label{eq:regularform}
\end{equation}
\end{corollary}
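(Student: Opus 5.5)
The plan is to prove the pointwise identity \eqref{bbbb} by a direct computation in components, and then to obtain \eqref{eq:regularform} by applying the scalar Divergence Theorem (the case of Theorem~\ref{diverg} applied row by row, or equivalently to a single vector field) to the vector field ${\MA}^T\bfu$.

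For the pointwise identity, we write $({\MA}^T\bfu)_j=\sum_{i=1}^N A_{ij}u_i$ for $j=1,\dots,N$. Since ${\MA}$ and $\bfu$ are $C^1$, the product rule gives
\begin{equation*}
\Div({\MA}^T\bfu)=\sum_{j=1}^N\partial_j\Big(\sum_{i=1}^N A_{ij}u_i\Big)=\sum_{i=1}^N u_i\sum_{j=1}^N\partial_jA_{ij}+\sum_{i,j=1}^N A_{ij}\,\partial_ju_i .
\end{equation*}
The first sum is $\bfu\cdot\DIV{\MA}$ by the definition of $\DIV{\MA}$ in Theorem~\ref{diverg}. The second sum is ${\MA}:\nabla\bfu$, using the convention $(\nabla\bfu)_{ij}=\partial_ju_i$ and $B:C=\sum_{i,j}B_{ij}C_{ij}$. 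This proves \eqref{bbbb}.

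For the integral formula, we integrate \eqref{bbbb} over $\Omega$. The vector field ${\bf F}={\MA}^T\bfu$ is $C^1$, so the classical divergence theorem gives $\int_\Omega\Div{\bf F}\,dV=\int_{\partial\Omega}{\bf F}\cdot{\bf n}\,dS$. This is Theorem~\ref{diverg} applied to a tensor whose only nonzero row is ${\bf F}^T$. It remains to observe that
\begin{equation*}
({\MA}^T\bfu)\cdot{\bf n}=\sum_{i,j}A_{ij}u_in_j=\bfu\cdot({\MA}{\bf n}),
\end{equation*}
which yields \eqref{eq:regularform}.

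The only delicate point is regularity up to the boundary. As stated, ${\MA}$ and $\bfu$ are only $C^1$ in the open set $\Omega$, while the boundary integral and the divergence theorem need ${\MA}$ and $\bfu$ to be continuous on $\overline\Omega$ and $\DIV{\MA}$, $\nabla\bfu$ to be integrable. I would therefore read the hypotheses as ${\MA}\in C^1(\overline\Omega;\R^{N\times N})$ and $\bfu\in C^1(\overline\Omega;\R^N)$, in line with the implicit assumptions of Theorem~\ref{diverg}. Under these assumptions the argument above is complete, and no real obstacle remains.
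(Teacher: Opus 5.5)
Your proof is correct and is the derivation the paper has in mind. The paper gives no argument of its own: it cites Gurtin and presents the statement as a corollary of Theorem~\ref{diverg}, which amounts to your componentwise product rule for $\Div(\MA^T\bfu)$, followed by the divergence theorem and the identity $(\MA^T\bfu)\cdot{\bf n}=\bfu\cdot(\MA{\bf n})$. Your reading of the hypotheses as $C^1$ up to the boundary, which is also implicit in Theorem~\ref{diverg}, is the right fix for the imprecise regularity assumption in the statement.
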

For Theorem \ref{diverg}, see for instance \cite[Section 6(1)]{Gurtin1973}, while the identities \eqref{bbbb} and \eqref{eq:regularform} are stated in \cite[Section 4(1), eq. (11), and Section 18(1)]{Gurtin1973}.

Several foundational issues in continuum mechanics--particularly those related to the representation of Cauchy fluxes--have motivated the development of weaker or generalized forms of the Gauss–Green formulas. Considerable effort has been devoted to establishing these results in more general settings and under minimal regularity assumptions.

For a detailed historical overview of divergence theorems we refer to \cite{ChTo3}. See also the recent and comprehensive treatment in \cite{SchoSchu}.
In the papers \cite{Anz,Cas,ChenFrid,ChFr1,ChTo2,ChTo,ChToZi,CCDM,CD3,CDCM,Schu,Silh1991} the study is confined to the case of vector fields $\A$ and scalar functions $\bfu$. For the case of tensor fields $\mathbf A$ and vector functions $\mathbf u$, we refer to \cite{DGMM,LuSiZa,Silh}. 

In particular, in \cite{LuSiZa} a second-order \emph{measure-valued} tensor field $\mathbf A$ is considered, namely a stress field represented by a tensor-valued Radon measure whose distributional divergence is a vector-valued measure. In that framework, the following notion is introduced: a tensor $\mathbf A$ is said to be \emph{equilibrated} if there exists a vector–valued Radon measure $\boldsymbol\mu := \DIV\mathbf A$ in an open set $\Omega$, called the divergence of $\mathbf A$, and a measure on $\partial\Omega$, called the normal trace of $\mathbf A$, such that the Gauss–Green formula \eqref{eq:regularform} holds for every sufficiently regular vector-valued function $\mathbf u$. An equilibrated tensor field admits a natural mechanical interpretation: it represents the stress field in a continuous body acted upon by a body force described by a measure $\boldsymbol\mu$ in $\Omega$ and by a boundary traction given in the sense of measures on $\partial\Omega$.

The aim of this paper is to identify a broad class of equilibrated tensor fields, namely the class of essentially bounded matrix-valued fields with measure divergence
$$\DMA(\Omega;\mathbb{R}^{N\times N}) :=\{ \MA \in L^\infty(\Omega;\mathbb{R}^{N\times N}) : \DIV\MA\in \mathcal M(\Omega;\R^N)
\},$$
and to establish Gauss--Green formulas for such tensors under minimal regularity assumptions.
We restrict our attention to essentially bounded fields, although $L^p$, $1\leq p <+\infty$, and measure-valued tensor fields have also been investigated in the literature (see \cite{CDCS,LuSiZa,Silh}). The analysis of these more general settings will be addressed in a forthcoming contribution.

Divergence-measure vector ﬁelds arise naturally in the context of nonlinear hyperbolic equations.
The theory of these fields is a fundamental issue in order introduce new techniques in entropy methods (see  \cite{ChenFrid,ChFr1,ChTo2,ChTo,ChToZi,CDC2,CDD,CDDG,Pan1}).
In particular, much of the recent progress in this area stems from applications to the uniqueness theory of weak entropy or kinetic solutions to hyperbolic conservation laws. Divergence-measure matrix-valued fields provide the appropriate framework for the analysis of hyperbolic systems of conservation laws and balance laws (see \cite[Section 5]{ChTo2} and \cite[Sections 10–11]{ChToZi}). The results established in the present paper may contribute to the development of an entropy approach for systems as well.

Another motivation for our study arises from elasticity and plasticity theory. In this framework one investigates the deformation $\mathbf u$ of an elastic body occupying a domain in $\mathbb{R}^N$, clamped along a portion of its boundary and subjected to a prescribed vector field of external forces (see, for instance, \cite[Sect.~6.6]{ABM} for the linearized elasticity system).

For this reason, in the present paper we allow for very irregular strain functions $\mathbf u$, belonging to the class $BV$ of functions of bounded variation. In a subsequent work, we plan to extend the analysis to strains associated with $BD$ functions of bounded deformation, which naturally arise in plasticity theory. \\

\emph{A brief literature review on the pairing.}  It is well known that the classical product rule \eqref{bbbb}
holds for smooth scalar functions $u$ and vector fields $\A$ in $\mathbb{R}^N$.
A first extension of this identity to the nonsmooth setting of
$BV$ functions and vector fields whose divergence belongs to a
Lebesgue space was obtained by Anzellotti \cite{Anz},
who introduced a pairing in such irregular setting. 
This construction provides a weak formulation of the product rule
and leads to generalized Gauss--Green formulas.
Later, in \cite{ChenFrid} the authors further developed this theory and proved that this pairing extends the classical scalar product
$\A\cdot\nabla u$.
Using a regularization argument, they proved that for every bounded
$u\in BV(\mathbb{R}^N)$ and every bounded divergence-measure
vector field $\A$ there exists a finite Radon measure
$(\A,Du)$ satisfying
\begin{equation}\label{eq:Leibniz}
\operatorname{div}(u\A)=u^*\,\operatorname{div}\A
+(\A,Du)
\end{equation}
in the sense of measures, where $u^*$ denotes the precise
representative of $u$. 
Subsequently, under the weaker assumption that $u^*\in L^1_{|\Div\A|}(\R^N)$ all the main properties and features 
as coarea, Leibniz and Gauss--Green formulas are proved in \cite{CD3}.

While the theory is well understood in the scalar–vector setting, the extension to tensor fields is not straightforward. The tensorial framework presents additional difficulties, since the natural pairing will involve the matrix contraction $\MA:D\bfu$ between the tensor field $\MA$ and the matrix field $D\bfu$, thus requiring a suitable extension of the divergence-measure theory to matrix-valued fields. \\

\emph{Main results.} 
Our aim is to introduce a notion of pairing between tensor fields with divergence measure and vector-valued functions of bounded variation, in order to extend the previous theory to the tensorial setting and, in particular, to get a tensor form of the Leibniz rule \eqref{eq:Leibniz}. 
The main result of the paper is Theorem \ref{main} below, which extends the validity of the Gauss–Green formula \eqref{eq:regularform}. It provides a generalization, under very weak assumptions and in a unified statement, of the classical divergence theorem in tensor form (see Theorem \ref{diverg} and its Corollary \ref{gaussclassica}) and of some partial extensions already present in the literature.
Specifically, the Gauss--Green formula considered in \cite[Remark 10.2.2]{ABM} is established for $\bfu\in BV_{\rm loc}(\R^N;\R^N)$ and regular tensor fields $\MA\in C^1(\R^N;\mathbb{R}^{N\times N})$. On the other hand, the case of $\bfu\in C^1(\R^N;\R^N)$ and irregular tensor fields is studied in \cite{DGMM}. 

The class of sets of finite perimeter provides a natural framework for the foundations of continuum mechanics, since it admits well-defined notions of oriented boundary and outer normal (see \cite{ChTo2,Zie}). Within the class of sets of finite perimeter, the following unifying result generalizes all the previous results for essentially bounded tensor fields.

\begin{theorem}\label{main}
 Let $\MA\in\DMAloc(\R^N;\mathbb{R}^{N\times N})$ and $\bfu\in BV_{\rm loc}(\R^N;\R^N)$ such that  $\bfu^*\in L^1_{|\DIV\MA|,{\rm loc}}(\R^N;\R^N)$. Let $E\subset \R^N$ be a bounded set of finite perimeter. Then the following Gauss--Green formulas hold
\begin{eqnarray}
& \displaystyle \int_{E^1} \bfu^* \cdot d\DIV\MA+ \int_{E^1}(\MA:D\bfu)  =- \int_{\partial^*E} \bfu^+\cdot \Trpv{\MA}{\partial^*E}\, d\Haus{N-1}\,, \label{eq:gaussgreen1} \\
& \displaystyle \int_{E^1\cup \partial^*E} \bfu^* \cdot d\DIV\MA + \int_{E^1\cup\partial^*E}(\MA:D\bfu)  =- \int_{\partial^*E} \bfu^-\cdot \Trmv{\MA}{\partial^*E}\, d\Haus{N-1} \,. \label{eq:gaussgreen2}
\end{eqnarray}
\end{theorem}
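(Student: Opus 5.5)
The plan is to reduce everything to the scalar theory of \cite{CD3} row by row. Write $\MA$ with rows $\A_i=(A_{i1},\dots,A_{iN})$, $i=1,\dots,N$, and $\bfu=(u_1,\dots,u_N)$. Since $\DIV\MA=(\Div\A_i)_i$, each row $\A_i$ belongs to $\DMAloc(\R^N;\R^N)$. The assumption $\bfu^*\in L^1_{|\DIV\MA|,{\rm loc}}$ gives $u_i^*\in L^1_{|\Div\A_i|,{\rm loc}}$ for each $i$, because $|\Div\A_i|\le|\DIV\MA|$. I will use the componentwise definitions given in the paper:
\begin{itemize}
\item the pairing is $\MA:D\bfu=\sum_i(\A_i,Du_i)$;
\item the trace $\Trpmv{\MA}{\partial^*E}$ has $i$-th component $\Trace{\A_i}{\partial^*E}$;
\item the precise and one-sided representatives $\bfu^*,\bfu^\pm$ are taken componentwise on $\partial^*E$, with orientation fixed by the measure-theoretic normal.
\end{itemize}
If the paper instead defines the pairing distributionally via $\Div(\MA^T\bfu)-\bfu^*\cdot\DIV\MA$, the first step is to check that this coincides with the componentwise sum. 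This follows from $\MA^T\bfu=\sum_i u_i\A_i$ and linearity of the divergence, together with the scalar Leibniz rule \eqref{eq:Leibniz} in the form of \cite{CD3} applied to each pair $(\A_i,u_i)$.

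Next I would invoke, for each $i$, the scalar Gauss--Green formulas on sets of finite perimeter from \cite{CD3}, which hold under $u_i^*\in L^1_{|\Div\A_i|}$:
\[
\int_{E^1}u_i^*\,d\Div\A_i+\int_{E^1}(\A_i,Du_i)=-\int_{\partial^*E}u_i^+\,\Trp{\A_i}{\partial^*E}\,d\Haus{N-1},
\]
with the analogous identity on $E^1\cup\partial^*E$ involving $u_i^-$ and $\Trm{\A_i}{\partial^*E}$. Summing over $i$ gives \eqref{eq:gaussgreen1} and \eqref{eq:gaussgreen2}, because $\sum_i u_i^*\,d\Div\A_i=\bfu^*\cdot d\DIV\MA$ and $\sum_i u_i^\pm\Trace{\A_i}{\partial^*E}=\bfu^\pm\cdot\Trpmv{\MA}{\partial^*E}$.

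Boundedness of $E$ allows a localization. Take a cutoff $\varphi\in C_c^\infty$ with $\varphi\equiv1$ on a neighbourhood of $\overline E$ and replace $\bfu$ by $\varphi\bfu$. This makes all the measures finite without changing any quantity on $E^1\cup\partial^*E$, since the pairing is local.

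The main obstacle is consistency of conventions, if the paper proves things intrinsically rather than componentwise. First, one must check that the tensorial normal traces built from the pairing really reduce to the scalar traces of the rows. Second, the signs and normal orientation have to match those of \cite{CD3}: the minus sign on the right-hand side suggests the trace is defined with respect to the interior normal. If no scalar result under the weak integrability $u^*\in L^1_{|\Div\A|}$ can be cited directly, I would first prove the statement for bounded $\bfu$. I would then truncate each $u_i$ at level $k$ and let $k\to\infty$, using dominated convergence with respect to $|\DIV\MA|$ on the left and the coarea/monotone convergence behaviour of the pairing and trace terms. That limit passage, controlling $(\A_i,DT_ku_i)\to(\A_i,Du_i)$ in total variation on $E^1$ and the boundary traces, is the technically delicate step.
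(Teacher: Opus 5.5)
Your proof is correct, but it is not the route the paper takes.

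\textbf{Your route.} You obtain the theorem as a corollary of the scalar formulas of \cite[Theorem 5.1]{CD3}. You apply them to each row $\A_i$ of $\MA$ and each component $u_i$, which is admissible since $|u_i^*|\le|\bfu^*|$ and $|\Div\A_i|\le|\DIV\MA|$, and then sum. This works because the pairing, $\DIV\MA$, the normal traces and $\bfu^*,\bfu^\pm$ all split along the rows. That is exactly how the paper defines them: see \eqref{eq:pairingsum} and the componentwise definition of the tensor trace. The trace is oriented by the interior normal as in \cite{CD3}, so your sign and orientation concern is settled in your favour. The cutoff localization is harmless but unnecessary, since \cite{CD3} is already stated for $\DMloc$ and $BV_{\rm loc}$.

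\textbf{The paper's route.} The paper argues at the tensor level. For bounded $\bfu$ it applies the scalar Leibniz rule once, to the pair $(\B,\chi_E)$ with $\B=\MA^T\bfu\in\DM$. It then uses $\Div(\chi_E\B)(\R^N)=0$ and $\chi_E^*=\chi_{E^1}+\frac12\chi_{\partial^*E}$. The traces of $\B$ are rewritten as $\bfu^\pm\cdot\Trpmv{\MA}{\partial^*E}$ via the product trace formula of Proposition~\ref{p:tracesb}. This gives $\int_{E^1}d\Div(\MA^T\bfu)=-\int_{\partial^*E}\bfu^+\cdot\Trpv{\MA}{\partial^*E}\,d\Haus{N-1}$. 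Then \eqref{f:anz1} yields \eqref{eq:gaussgreen1}, and a componentwise truncation removes the boundedness assumption.

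\textbf{What each buys.} Your route is shorter and inherits the unbounded case for free from \cite{CD3}. The paper's route needs from the scalar theory only the bounded pairing with $\chi_E$. It also produces the identity for $\Div(\MA^T\bfu)$ on $E^1$ and the trace formula for $\MA^T\bfu$, which are reused later for weakly regular and rough domains.

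\textbf{On your fallback.} You are right that the truncation limit on the Borel set $E^1$ needs more than weak$^*$ convergence of $(\MA:D\bfu^M)$. Weak$^*$ convergence is all that Step 3 of Theorem~\ref{thm:main}, which the paper invokes there, provides. Convergence in total variation follows from linearity of the pairing together with \eqref{eq:abscont} applied to $\bfu-\bfu^M$. Indeed, $|D(\bfu-\bfu^M)|(U)\to0$ on every bounded open set $U$ by the coarea formula.
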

Here $E^1$ and $\partial^*E$ are the measure theoretic interior and reduced boundary of $E$ respectively, and 
$\Trace{\MA}{\partial^* E}$
are the normal traces of \(\MA\) when \(\partial^* E\) is oriented
with respect to the interior unit normal vector. Moreover,  $\bfu^*$ and $\bfu^\pm$ are the precise representative of $\bfu$ and the traces of $\bfu$ on $\partial^* E$ respectively, in the sense of $BV$ functions (see Section \ref{sec:preliminaries}) 
and $L^1_{|\DIV\MA|,{\rm loc}}(\R^N;\R^N)$ denotes the space of the locally integrable functions with respect to the measure $|\DIV\MA|$.

Theorem \ref{main} extends to the tensorial setting the Gauss--Green formulas obtained in \cite[Theorem 5.3]{Cas} for Lipschitz domains and in \cite[Theorem 5.1]{CD3} for sets of finite perimeter. 

The pairing measure $(\MA:D\bfu)$ appearing in formulas \eqref{eq:gaussgreen1} and \eqref{eq:gaussgreen2} will be introduced in Section \ref{sec:newpairing} in distributional sense, as follows: for every test function $\varphi\in C^\infty_c(\Omega)$
\begin{equation*}\label{eq:pairingvettoriale23}
\pscal{(\MA:D\bfu)}{\varphi} :=
-\int_\Omega \varphi \bfu\cdot  \DIV \MA \,dx -\int_\Omega \MA:[\bfu \otimes\nabla\varphi]\,dx.
\end{equation*}
When $\bfu$ is a regular function, $(\MA:D\bfu)$ coincides with the usual scalar product $\MA:D\bfu$ between the matrices $\MA$ and $D\bfu$. 
This pairing preserves the main
structural features of the classical theory, such as the
absolute continuity with respect to $|D\bfu|$ and Leibniz rule.
Finally, we specialize Theorem \ref{main} to some particular domains already considered in literature. More precisely, we establish some tensor Gauss–Green formulas on \emph{rough domains} of finite perimeter (see \eqref{eq:roughdom} below) and on the subclass of \emph{weakly regular} sets (satisfying \eqref{eq:weaklyregintro}). They will be proved independently in Theorem \ref{t:GGrough} and Theorem \ref{t:GGw}, respectively.
\begin{theorem}
\label{t:GGroughintro}
Let
$\MA\in\DMAloc(\R^N;\mathbb{R}^{N\times N})$ and $\bfu\in BV_{\rm loc}(\R^N;\R^N)\cap L^\infty_{\rm loc}(\R^N;\R^N)$. 
Let $E\subset\R^N$ be a bounded open set of finite perimeter satisfying 
\begin{equation}
\Haus{N-1}(\partial E \setminus E^0) < +\infty\,, 
\label{eq:roughdom}
\end{equation}
where $E^0$ denotes the measure-theoretic exterior of $E$. 
Then there exists $g\in L^\infty(\partial E\setminus E^0;\Haus{N-1})$ such that the following Gauss--Green formula holds
\begin{equation}\label{eq:gaussgreen1333intro} 
\displaystyle \int_{E} \bfu^* \cdot d\DIV\MA + \int_{E}(\MA:D\bfu) 
=\displaystyle - \int_{\partial^*E} \bfu^+\cdot \Trpv{\MA}{\partial^*E}\ d\Haus{N-1}
- \int_{\partial E\cap E^1} g(x)\ d\Haus{N-1}\,.\nonumber
\end{equation}
In particular, if
\begin{equation}
\Haus{N-1}(\partial E) = \Haus{N-1}(\partial^* E)\,,
\label{eq:weaklyregintro}
\end{equation}
the previous Gauss--Green formula reduces to
\begin{eqnarray*}
& \displaystyle \int_{ E} \bfu^* \cdot d\DIV\MA + \int_{ E}(\MA:D\bfu)  =- \int_{\partial E} \bfu^+\cdot \Trpv{\MA}{\partial E} d\Haus{N-1}\,.\label{eq:gaussgreen145intro} 
\end{eqnarray*}
Here $ \Trpv{\MA}{\partial E}$ and $ \Trpv{\MA}{\partial^* E}$
are the normal traces of \(\MA\) on \(\partial  E\) and \(\partial^*  E\), respectively,  when they are oriented
with respect to the interior unit normal vector.
\end{theorem}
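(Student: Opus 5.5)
The plan is to reduce \cref{t:GGroughintro} to \cref{main}, applied on $E$ viewed as a set of finite perimeter, and then to account for the part of the topological boundary that lies in the measure-theoretic interior. Since $E$ is open, every point of $E$ is a density-one point, so $E\subset E^1$. Hence
\[
E^1 = E \cup (\partial E\cap E^1)\cup (E^1\setminus \overline{E}).
\]
The last set is contained in the complement of $\overline E$. That complement is open and $E$ has measure zero in it, so it lies in $E^0$; therefore $E^1\setminus\overline E=\emptyset$. Thus $E^1=E\cup(\partial E\cap E^1)$ as a disjoint union.

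The boundedness hypothesis $\bfu\in L^\infty_{\rm loc}$ guarantees $\bfu^*\in L^1_{|\DIV\MA|,{\rm loc}}$, so \eqref{eq:gaussgreen1} applies. It gives
\[
\int_{E}\bfu^*\cdot d\DIV\MA+\int_E(\MA:D\bfu) = -\int_{\partial^*E}\bfu^+\cdot\Trpv{\MA}{\partial^*E}\,d\Haus{N-1} - \Big[\int_{\partial E\cap E^1}\bfu^*\cdot d\DIV\MA+\int_{\partial E\cap E^1}(\MA:D\bfu)\Big].
\]
It therefore suffices to show that the measure
\[
\lambda:=\bfu^*\,\DIV\MA+(\MA:D\bfu)
\]
restricted to $S:=\partial E\cap E^1$ is of the form $g\,\Haus{N-1}\res S$ with $g\in L^\infty$; one then extends $g$ by zero to $\partial E\setminus E^0$.

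\textbf{Step 1: $|\DIV\MA|\res S\ll\Haus{N-1}$.} Since $\Haus{N-1}(S)<\infty$ by \eqref{eq:roughdom}, $S$ is $\sigma$-finite for $\Haus{N-1}$. The standard property of $\DMA$ fields (proved earlier, componentwise from the vector case) is $|\DIV\MA|\ll\Haus{N-1}$. So $\DIV\MA\res S=h\,\Haus{N-1}\res S$. The density $h$ is bounded by $C\|\MA\|_\infty$ via the usual blow-up/density estimate $|\DIV\MA|(B_r(x))\le C\|\MA\|_\infty r^{N-1}$ at points of $\Haus{N-1}$-density. Alternatively one writes it as a jump of normal traces on the countably $\Haus{N-1}$-rectifiable part of $S$, using the traces $\Trpmv{\MA}{\cdot}$ introduced earlier. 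The purely unrectifiable part of $S$ carries no $|\DIV\MA|$ mass, again by the trace theory: $|\DIV\MA|$ vanishes on $\Haus{N-1}$-$\sigma$-finite purely unrectifiable sets.

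\textbf{Step 2: the pairing term.} By the absolute continuity $|(\MA:D\bfu)|\le\|\MA\|_\infty|D\bfu|$ established in Section~\ref{sec:newpairing}, and since $S$ is $\sigma$-finite for $\Haus{N-1}$, $D\bfu\res S$ is concentrated on the jump set $J_\bfu\cap S$. There $D\bfu=(\bfu^+-\bfu^-)\otimes\nu\,\Haus{N-1}$, with density bounded by $2\|\bfu\|_{L^\infty}$ (we use local boundedness on a neighborhood of $\overline E$). Thus $(\MA:D\bfu)\res S=k\,\Haus{N-1}\res S$ with $|k|\le 2\|\MA\|_\infty\|\bfu\|_\infty$. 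Also $|\bfu^*h|$ is bounded, so $g:=\bfu^*h+k\in L^\infty$.

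\textbf{Step 3: the weakly regular case.} If $\Haus{N-1}(\partial E)=\Haus{N-1}(\partial^*E)$, then, since $\partial^*E\subset\partial E$ and the measure is finite, $\Haus{N-1}(\partial E\setminus\partial^*E)=0$. In particular $\Haus{N-1}(S)=0$, so the $g$-term vanishes. Moreover, the normal traces on $\partial E$ and on $\partial^*E$ agree $\Haus{N-1}$-a.e., giving the reduced formula.

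The main obstacle is Step 1, namely the bound on the density of $\DIV\MA$ relative to $\Haus{N-1}$ on a set that may not be rectifiable. I would handle it by applying the known vector-field result row by row, noting that each row of $\MA$ lies in $\DMA(\R^N;\R^N)$.
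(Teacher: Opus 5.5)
\textbf{Verdict.} Your reduction is correct and takes a different route from the paper's, but Step 1 has a gap: both arguments you give for bounding $h$ are false, although the needed bound is true. The reduction itself works: $E^1=E\cup(\partial E\cap E^1)$ for open $E$, so subtracting the contribution of $S=\partial E\cap E^1$ from \eqref{eq:gaussgreen1} reduces everything to showing that $\lambda=\bfu^*\cdot\DIV\MA+(\MA:D\bfu)$, which equals $\Div(\MA^T\bfu)$ by \eqref{f:anz1}, satisfies $\lambda\res S=g\,\Haus{N-1}\res S$ with $g$ bounded. Steps 2 and 3 are fine.

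\textbf{Why Step 1 fails as written.}
\begin{itemize}
\item The inequality $|\DIV\MA|(B_r(x))\le C\|\MA\|_\infty r^{N-1}$ is false for the total variation. The field $F=(\sin(kx_1),0,\dots,0)$ has $\|F\|_\infty\le 1$, yet $|\Div F|(B_r)\approx c\,k\,r^N$, which is unbounded in $k$. Only the \emph{signed} value satisfies $|\Div F(B_r(x))|\le N\omega_N\|F\|_\infty r^{N-1}$, for a.e.\ $r$, via normal traces on spheres.
\item $|\DIV\MA|$ can charge purely unrectifiable sets. Suppose $\mu\ge 0$ satisfies $\mu(B_r(x))\le Cr^{N-1}$ on all balls, e.g.\ $\mu=\Haus{1}\res K$ with $K$ the four-corner Cantor set in $\R^2$. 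The Meyers--Ziemer inequality $\int|\varphi|\,d\mu\le C'\int|\nabla\varphi|\,dx$ together with Hahn--Banach gives $F\in L^\infty$ with $\Div F=\mu$.
\item With $E=B_1\setminus K$ one has $\partial E\cap E^1=K$. So your rectifiable-part/trace argument misses exactly the mass that $g$ has to carry.
\end{itemize}

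\textbf{How to repair it.} What is true, and enough, is that $|\Div F|(B)\le C_N\|F\|_\infty\Haus{N-1}(B)$ for every bounded divergence-measure field $F$ and every Borel set $B$.
\begin{itemize}
\item For compact $K$, cover it by finitely many balls $B_{r_i}(x_i)$ with $r_i<\delta$ and $\sum_i r_i^{N-1}\le C(\Haus{N-1}(K)+\varepsilon)$.
\item Test with $\varphi=\max_i\psi_i$, where $\psi_i=1$ on $B_{r_i}(x_i)$ and decays linearly to $0$ on a thin annulus. Then $\int|\nabla\varphi|\,dx\lesssim\sum_i r_i^{N-1}$, so $|\int\varphi\,d\Div F|\le\|F\|_\infty\int|\nabla\varphi|\,dx$.
\item Since $|\int\varphi\,d\Div F-\Div F(K)|\le|\Div F|(\{\varphi>0\}\setminus K)\to 0$ as $\delta\to0$, you get $|\Div F(K)|\le C_N\|F\|_\infty\Haus{N-1}(K)$.
\item Apply this to compact subsets of $B\cap P$ and of $B\setminus P$, where $P$ is a Hahn set for $\Div F$. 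This gives the total-variation bound.
\end{itemize}
Apply the lemma directly to $F=\MA^T\bfu$, which is bounded near $\overline E$ and has $\Div F=\lambda$. This gives $|g|\le C_N\|\MA^T\bfu\|_{L^\infty}$ and makes the split into $h$ and $k$ unnecessary.

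\textbf{Comparison with the paper.} The paper does not go through Theorem~\ref{main}. It extends $\MA^T\bfu$ by zero outside $E$ and uses \eqref{eq:roughdom} to invoke the Chen--Li--Torres results: rough open sets are extension domains for bounded divergence-measure fields, and their Gauss--Green formula up to the boundary supplies the bounded $g$. It then computes the pairing with $D\chi_E$ on $\partial^*E$ via Proposition~\ref{p:tracesb} and concludes with \eqref{f:anz1}. In your repaired argument, \eqref{eq:roughdom} only serves to make $\Haus{N-1}(\partial E\cap E^1)$ finite. The price is that the $L^\infty$ bound, which the paper imports, must be proved by hand as above.
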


\section{Preliminaries} \label{sec:preliminaries}

\medskip
\noindent\emph{Vectors and matrices.} 
Let $N \ge 2$. For ${\bf a}, {\bf b} \in \mathbb{R}^N$, we write ${\bf a} \cdot {\bf b}$ for the Euclidean scalar product and denote by $|{\bf a}| = \sqrt{{\bf a} \cdot {\bf a}}$ the associated norm. We denote by $\mathbb{R}^{N \times N}$ the set of real $N \times N$ matrices. Given $\mathbf A, \mathbf B \in \mathbb{R}^{N \times N}$, their Frobenius scalar product is defined as
\[
\mathbf A : \mathbf B := \mathrm{tr}(\mathbf A^T \mathbf B),
\]
where $\mathbf A^T$ is the transpose of $\mathbf A$ and $\mathrm{tr}(\mathbf A)$ its trace. The associated Frobenius norm is
\[
|\mathbf A| := \sqrt{\mathbf A : \mathbf A}.
\]
For ${\bf a}, {\bf b} \in \mathbb{R}^N$, the tensor product is defined by
\[
{\bf a} \otimes {\bf b} := {\bf a} \, {\bf b}^T \in \mathbb{R}^{N \times N}.
\]
The following identities (see, e.g., \cite[Section 3]{Gurtin1973}) follow directly from the definitions:
\begin{equation}
\mathbf A : ({\bf v} \otimes {\bf w}) = {\bf v} \cdot (\mathbf A {\bf w}) = {\bf w} \cdot (\mathbf A^T {\bf v})\,,
\label{eq:ident1}
\end{equation}
for all ${\bf v}, {\bf w} \in \mathbb{R}^N$ and $\mathbf A \in \mathbb{R}^{N \times N}$.

\medskip

Given a matrix $\mathbf A \in \mathbb{R}^{N \times N}$, we denote by $\DIV \mathbf A$ its distributional divergence, defined columnwise by
\[
\DIV \mathbf A := (\Div \mathbf A_1, \dots, \Div \mathbf A_N),
\]
where $\mathbf A_i$ is the $i$-th column of $\mathbf A$ and
\[
\Div \mathbf A_i := \sum_{j=1}^N \frac{\partial A_{ij}}{\partial x_j}, \quad i=1,\dots,N.
\]
Similarly, one may define the rowwise divergence
\[
\underline{\DIV}\, \mathbf A := (\Div \mathbf A^1, \dots, \Div \mathbf A^N),
\]
where $\mathbf A^i$ is the $i$-th row of $\mathbf A$ and
\[
\Div \mathbf A^i := \sum_{j=1}^N \frac{\partial A_{ji}}{\partial x_j}, \quad i=1,\dots,N.
\]
If $\mathbf A$ is symmetric, then $\underline{\DIV}\, \mathbf A = \DIV \mathbf A$.

Finally, observe that for every Borel set $B \subset \mathbb{R}^N$ one has the total variation estimate
\begin{equation}
|\DIV \mathbf A|(B) \le \sum_{i=1}^N |\Div \mathbf A_i|(B),
\label{eq:totvariationineq}
\end{equation}
see, e.g., \cite[p.~4]{AFP}.

\medskip

\noindent\emph{Measures.} The following definitions and basic facts about measures can be found, e.g., in \cite[Chapter 1]{AFP}.

We denote by
$\Leb{N}$ 
and $\Haus{N-1}$
the Lebesgue measure 
and the $(N-1)$-dimensional 
Hausdorff measure in $\R^N$, respectively. {Unless otherwise stated, a measurable set is understood to be $\Leb{N}$-measurable. 

Following the notation of \cite{AFP}, for every open set $\Omega\subseteq \R^N$ we denote by $\mathcal{M}_{\rm loc}(\Omega;\R^N)$ the space of vector Radon measures on $\Omega$, and by $\mathcal{M}(\Omega;\R^N)$ the space of finite vector Radon measures on $\Omega$, while by $\mathcal{M}(\Omega)$ and $\mathcal{M}_{\rm loc}(\Omega)$ the spaces of scalar Radon measures and scalar finite Radon measures on $\Omega$, respectively.

Given $\bm\mu\in\mathcal{M}_{\rm loc}(\Omega;\R^N)$ and a $\bm\mu$-measurable set $E$, the {\em restriction} 
$\bm\mu\res E$
is the Radon measure defined by
\[
\bm\mu\res E(B)=\bm\mu(E\cap B), \qquad \forall\ B\ \text{$\bm\mu$-measurable},\ 
B\subset\Omega.
\]

The {\em total variation} $|\bm\mu|$ of $\bm\mu \in \mathcal{M}_{\rm loc}(\Omega;\R^N)$ is 
the nonnegative Radon measure defined by
\[
|\bm\mu|(E) := \sup\left\{ \sum_{h=0}^\infty |\bm\mu(E_h)| \colon \ E_h\ 
\text{$\bm\mu$-measurable sets, pairwise disjoint},\ E=\bigcup_{h=0}^\infty E_h 
\right\},
\]
for every $\bm\mu$-measurable set $E \Subset \Omega$. If $\bm\mu \in \mathcal{M}(\Omega;\R^N)$, then $|\bm\mu|(\Omega) < \infty$.

\medskip  

A measure $\bm\mu \in \mathcal{M}_{\rm loc}(\Omega;\R^N)$ is {\em absolutely continuous} with respect to a given
nonnegative measure $\bm\nu$ (notation: $\bm\mu \ll \bm\nu$) 
if $|\bm\mu|(B)=0$ for every Borel set $B$ such that $\bm\nu(B)=0$. 
Two positive measures $\bm\nu_1, \bm\nu_2\in \mathcal{M}_{\rm loc}(\Omega;\R^N)$ are {\em mutually 
singular}
(notation: $\bm\nu_1 \perp \bm\nu_2$) 
if there exists a Borel set $E$ such that $\bm\nu_1|(E)=0$ and 
$|\bm\nu_2|(\Omega\setminus E) = 0$. Given $\bm\mu\in \mathcal{M}_{\rm loc}(\Omega;\R^N)$, the \emph{Lebesgue decomposition} of $\bm\mu$ with respect to $(\Leb{N})^N$ is 
\begin{equation*}
\bm\mu=\bm\mu^a + \bm\mu^s\,,
\end{equation*}
where $\bm\mu^a:=(\mu_i^a)_{i=1,\dots,N}$ is the {\em absolutely continuous part}, satisfying $\mu_i^a \ll \Leb{N}$, and $\bm\mu^s:=(\mu_i^s)_{i=1,\dots,N}$ is the {\em singular part}, satisfying $\mu_i^s \perp \Leb{N}$.

A measure $\bm\mu$ in $\Omega$ is \emph{concentrated on} $E\subset\Omega$ if $\bm\mu(\Omega\backslash E)=0$. The intersection of the closed sets $E\subset\Omega$ such that $\bm\mu$ is concentrated on $E$ is called the \emph{support of $\bm\mu$} and is denoted by ${\rm supp}(\bm\mu)$. In particular,
\begin{equation*}
\Omega\backslash {\rm supp}(\bm\mu) = \left\{x\in\Omega:\,\, \bm\mu(B_r(x))=0 \mbox{ for some $r>0$} \right\}\,.
\end{equation*}

We denote by $L^1_\mu(\Omega;\mathbb{R}^N)$ the space of $\mathbb{R}^N$-valued functions that are integrable with respect to $\mu$ on $\Omega$, and by $L^1_{\mu,\mathrm{loc}}(\Omega;\mathbb{R}^N)$ the space of functions that belong to $L^1_\mu(K;\mathbb{R}^N)$ for every compact set $K\subset\Omega$. In the scalar-valued case, we omit the target space and simply write $L^1(\Omega,\mu)$ and $L^1_{\mathrm{loc}}(\Omega,\mu)$.

\medskip

\medskip
\noindent\emph{Approximate limits.}
%\label{ss:def}
The following basic definitions and results can be found, e.g., in \cite[Sections 3.6 and 4.5]{AFP}.

We say that a function \(\bfu\in L^1_{\rm loc}(\Omega;\R^N)\) has an {\em approximate limit} 
\({\bf z}\in\R^N\) at
$x\in\Omega$ if
\begin{equation*}
\lim_{r\rightarrow0^{+}}\frac{1}{\Leb{N}\left(  B_r(x)\right)}\int_{B_r\left(  
x\right)
}\left|  \bfu(y)  - {\bf z}  \right|  \,dy=0\,;
\label{eq:approxlim1}
\end{equation*}
in this case we say that $x$ is a {\em Lebesgue point} of $\bfu$.
The set $S_{\bfu}\subset\Omega$ of points where this property does not hold is called the
{\em approximate discontinuity set} of $\bfu$, and, thanks to Lebesgue's differentiation theorem, we know that $\Leb{N}(S_\bfu) = 0$.
For any $x\in \Omega \setminus S_\bfu$ the approximate limit ${\bf z}$ is uniquely 
determined and is denoted by ${\bf z}=:\tilde{\bfu}(x)$. 

Given $\bfu \in L^1_{\rm loc}(\Omega;\R^N)$, we say that \(x\in\Omega\) is an {\em approximate jump point} of \(\bfu\) if
there exist \({\bf a}, {\bf b}\in\R^N\), \({\bf a}\neq {\bf b}\), and a unit vector \(\bm\nu\in\R^N\) such that 
\begin{equation}\label{f:disc}
\begin{gathered}
\lim_{r \to 0^+} \frac{1}{\Leb{N}(B_r^+(x))}
\int_{B_r^+(x)} \|\bfu(y) - {\bf a}\|\, dy = 0,
\\
\lim_{r \to 0^+} \frac{1}{\Leb{N}(B_r^-(x))}
\int_{B_r^-(x)} \|\bfu(y) - {\bf b}\|\, dy = 0,
\end{gathered}
\end{equation}
where \(B_r^+(x) := \{y\in B_r(x):\ (y-x)\cdot \bm\nu > 0\}\), and 
\(B_r^-(x) := \{y\in B_r(x):\ (y-x)\cdot \bm\nu < 0\}\).
The triplet \(({\bf a}, {\bf b}, \bm\nu)\), uniquely determined by \eqref{f:disc} 
up to a permutation
of \(({\bf a}, {\bf b})\) and a change of sign of \(\bm\nu\),
is denoted by \((\bfu^+(x), \bfu^-(x), \bm\nu_u(x))\).
The set of approximate jump points of \(\bfu\) is denoted by \(J_{\bfu}\), and it clearly satisfies $J_{\bfu} \subset S_{\bfu}$. It is easy to check that $J_\bfu$ and $S_\bfu$ are Borel sets, and $\widetilde{\bfu}$, $\bfu^+$ and $\bfu^-$ are Borel functions. 

Finally, for $\bfu \in L^1_{\rm loc}(\Omega;\R^N)$ we define the {\em precise representative} of $\bfu$ in $x \in \Omega$ as
\begin{equation*} \label{def:precise_repr}
\bfu^{*}(x) := \lim_{r\to0^+}\frac{1}{\Leb{N}\left(  B_r(x)\right)} \int_{B_r(x)}\bfu(y) \, d y,
\end{equation*}
whenever the limit exists. It is then clear that 
\begin{equation}
\bfu^*(x)=
\begin{cases}
\tilde{\bfu}(x) & x\in \Omega \setminus S_{\bfu}, \\
\displaystyle \frac{\bfu^+(x)+ \bfu^-(x)}{2} & x\in J_{\bfu}.
\end{cases}
\label{eq:preciserepresentative}
\end{equation} 
A priori, it is not clear whether $\bfu^*$ is well posed in $S_{\bfu} \setminus J_{\bfu}$, in general. However, for sufficiently regular functions it is known that $S_{\bfu} \setminus J_{\bfu}$ is suitably small (for instance, when $\bfu$ is a function of bounded variation, see Section \ref{sec:BV}). 

Let $\bfu\in L^1(\Omega;\R^N)$, and consider the sequence 
\begin{equation}
\bfu_\varepsilon:= \eta_\varepsilon * \bfu = (\eta_\varepsilon * u_1, \eta_\varepsilon * u_2, \dots, \eta_\varepsilon * u_N)\,, 
\label{eq:mollification}
\end{equation}
where $\eta_\varepsilon(x):= \frac{1}{\varepsilon^N} \eta(\frac{x}{\varepsilon})$ and $\eta$ is a positive symmetric mollifier. Then, combining \cite[Proposition 3.64(b) and Proposition 3.69(b)]{AFP} we get
\begin{equation}
\bfu_\varepsilon \to \bfu^* \quad \mbox{ pointwise in $\Omega\backslash(S_\bfu\backslash J_\bfu)$ as $\varepsilon \to 0$.}
\label{eq:convtoprecise}
\end{equation}

\medskip
\subsection{Functions of bounded variation and sets of finite perimeter. } \label{sec:BV}
For a detailed treatment of the theory of $BV$ functions, we refer the reader to the monograph \cite{AFP}.

We say that \(\bfu\in L^1(\Omega)\) is a function of bounded variation in \(\Omega\)
if its distributional derivative \(D\bfu\in \mathbb{R}^{N\times N}\) %of \(\bfu\) 
is a finite Radon matrix-valued measure in \(\Omega\).
The vector space of all functions of bounded variation in \(\Omega\)
is denoted by \(BV(\Omega)\).

If \(\bfu\in BV(\Omega)\), then \(D\bfu\) can be decomposed as
the sum of the absolutely continuous and the singular part with respect
to the Lebesgue measure, i.e.\
\[
D\bfu = D^a \bfu + D^s \bfu,
\qquad D^a \bfu = \nabla \bfu \, \Leb{N},
\]
where \(\nabla \bfu\) is the approximate gradient of \(\bfu\),
defined \(\Leb{N}\)-a.e.\ in \(\Omega\)
(see \cite[Section~3.9]{AFP}).
On the other hand, 
the jump set $J_{\bfu}$ is countably $\Haus{N-1}$--rectifiable,
$\Haus{N-1}(S_\bfu  \setminus J_\bfu) = 0$
(see \cite[Definition~2.57 and Theorem~3.78]{AFP}), and
the singular part \(D^s \bfu\) can be further decomposed
as the sum of its Cantor and jump part, i.e.
\[
D^s \bfu = D^c \bfu + D^j \bfu,
\qquad
D^c \bfu := D^s \bfu \res (\Omega\setminus S_\bfu),
\quad
D^j \bfu := D^s \bfu \res J_\bfu,
\]
where the symbol \(\bm\mu\res B\) denotes the restriction of the measure \(\bm\mu\)
to the set \(B\).
We will denote by \(D^d \bfu := D^a \bfu + D^c \bfu\) the diffuse part of the measure \(D\bfu\).

The precise representative $\bfu^*$ of $\bfu$ is defined in
$\Omega\setminus(S_\bfu\setminus J_\bfu)$
(hence $\Haus{N-1}$-a.e.\ in $\Omega$) as in \eqref{eq:preciserepresentative}. The mollified functions $\bfu_\varepsilon$ pointwise converge to
$\bfu^*$ in its domain
(see \cite[Corollary~3.80]{AFP}).

Let $\Omega$ be an open set with Lipschitz boundary. Then for $\Haus{N-1}$-a.e. $x\in\partial \Omega$, there exists a vector $\bfu_{\Omega}(x)\in\R$ such that
\begin{equation}
\lim_{\rho\to0}\rho^{-N}\int_{\Omega\cap B_\rho(x)} |\bfu(y)-\bfu_{\Omega}(x)|\,\mathrm{d}y=0
\label{eq:boundarytracethm}
\end{equation} 
(see \cite[Theorem 3.87]{AFP}). The function $\bfu_\Omega\in L^1(\partial\Omega)$ is called a \emph{trace} of $\bfu$ on $\partial\Omega$. 

We conclude this section recalling an approximation result by smooth functions, due to Anzellotti and Giaquinta (see, e.g., \cite[Theorem 3.9]{AFP} combined with \cite[Remarks 1.18 and 2.12]{GiustiMinSur}).

\begin{theorem}\label{thm:AG}
Let $\bfu\in BV(\Omega;\R^N)$. Then there exists a sequence $(\bfu_k)_{k\in\N}\subset C^\infty(\Omega;\R^N)$ such that 
\begin{equation*}
\bfu_k\to \bfu \quad \mbox{ in }L^1(\Omega;\R^N) \quad \mbox{ and } \quad \lim_{k\to+\infty}\int_\Omega |\nabla \bfu_k|\,\mathrm{d}x =  \int_\Omega|D\bfu|\,.
\end{equation*}
If, in addition, $\Omega$ is a bounded open set with Lipschitz boundary, then $\bfu_{k,\Omega}=\bfu_\Omega$ for all $k$. 
\end{theorem}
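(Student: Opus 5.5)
The plan is to follow the classical Meyers--Serrin type construction with a partition of unity, as in \cite[Theorem 3.9]{AFP}, carried out directly on the vector-valued function $\bfu$. I would not argue component by component: $|D\bfu|$ is the total variation of a matrix-valued measure and is not the sum of the variations of the components, so strict convergence of the components would not give $\int_\Omega|\nabla\bfu_k|\,dx\to|D\bfu|(\Omega)$.

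\emph{Construction.} Fix $\delta>0$ and an exhaustion $\Omega_j:=\{x\in\Omega:\ {\rm dist}(x,\partial\Omega)>1/(m+j)\}\cap B_{m+j}(0)$, with $m$ chosen so that $|D\bfu|(\Omega\setminus\Omega_1)<\delta$. Set $\Omega_0=\emptyset$. I would take a smooth partition of unity $(\psi_j)$ with $\psi_j\in C^\infty_c(\Omega_{j+1}\setminus\overline{\Omega}_{j-1})$, $0\le\psi_j\le1$ and $\sum_j\psi_j\equiv1$ in $\Omega$. Each point then lies in the support of at most three $\psi_j$, and $\sum_j\nabla\psi_j\equiv0$. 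For each $j$ I would choose $\eps_j$ small enough that $\eta_{\eps_j}*(\psi_j\bfu)$ is still supported in $\Omega_{j+1}\setminus\overline\Omega_{j-1}$ and
\begin{equation*}
\|\eta_{\eps_j}*(\psi_j\bfu)-\psi_j\bfu\|_{L^1}<\delta2^{-j},\qquad \|\eta_{\eps_j}*(\bfu\otimes\nabla\psi_j)-\bfu\otimes\nabla\psi_j\|_{L^1}<\delta2^{-j}.
\end{equation*}
Then I define $\bfu_\delta:=\sum_j\eta_{\eps_j}*(\psi_j\bfu)$. This sum is locally finite, so $\bfu_\delta\in C^\infty(\Omega;\R^N)$, and $\|\bfu_\delta-\bfu\|_{L^1}<\delta$.

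\emph{Variation estimate.} Using $D(\psi_j\bfu)=\psi_jD\bfu+\bfu\otimes\nabla\psi_j$ and $\sum_j\bfu\otimes\nabla\psi_j=0$, I would write
\begin{equation*}
\nabla\bfu_\delta=\sum_j\eta_{\eps_j}*(\psi_jD\bfu)+\sum_j\big(\eta_{\eps_j}*(\bfu\otimes\nabla\psi_j)-\bfu\otimes\nabla\psi_j\big).
\end{equation*}
The second sum has $L^1$ norm less than $\delta$. For the first sum, mollification does not increase the total variation, so its $L^1$ norm is at most $\sum_j\int\psi_j\,d|D\bfu|=|D\bfu|(\Omega)$. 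Hence $\int_\Omega|\nabla\bfu_\delta|\le|D\bfu|(\Omega)+\delta$. On the other hand, lower semicontinuity of the total variation under $L^1$ convergence gives $\liminf_{\delta\to0}\int_\Omega|\nabla\bfu_\delta|\ge|D\bfu|(\Omega)$. Taking $\delta=1/k$ yields the first statement.

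\emph{Traces.} This step is the main obstacle. Let $\Omega$ be bounded and Lipschitz, so that the ball factor in $\Omega_j$ plays no role. I would rely on the trace inequality for Lipschitz domains (cf.\ \cite[Remark 2.12]{GiustiMinSur}). With $S_t:=\{x\in\Omega:{\rm dist}(x,\partial\Omega)<t\}$, it states that every $\mathbf w\in BV(\Omega;\R^N)$ satisfies
\begin{equation*}
\int_{\partial\Omega}|\mathbf w_\Omega|\,d\Haus{N-1}\le C\,|D\mathbf w|(S_t)+\frac{C}{t}\int_{S_t}|\mathbf w|\,dx.
\end{equation*}
I apply it to $\mathbf w=\bfu_\delta-\bfu$, which lies in $BV$ since $\int_\Omega|\nabla\bfu_\delta|<\infty$. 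The delicate point is a bound on $|D\bfu_\delta|(S_t)$ that is uniform in $\delta$. The construction gives it, because every piece $\eta_{\eps_j}*(\psi_jD\bfu)$ stays inside the annulus $\Omega_{j+1}\setminus\overline\Omega_{j-1}$. Writing $j(t)$ for the smallest $j$ with $\Omega\setminus\Omega_{j-1}\subset S_t$, the mass of $\nabla\bfu_\delta$ inside $S_t$ is thus controlled by $|D\bfu|(\Omega\setminus\Omega_{j(t)-2})+\delta$. This quantity tends to $\delta$ as $t\to0$, uniformly in the choice of $\delta$. Consequently $\int_{\partial\Omega}|(\bfu_\delta)_\Omega-\bfu_\Omega|\le C\omega(t)+C\delta+C\delta/t$, where $\omega(t)\to0$. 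Letting $\delta\to0$ first and then $t\to0$ shows that the traces converge in $L^1(\partial\Omega)$.

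Exact equality $\bfu_{k,\Omega}=\bfu_\Omega$ for every $k$ needs one more step. I would let $\delta$ depend on $j$, replacing $\delta2^{-j}$ by $\delta_j\to0$ faster than $1/(m+j)$. Then for every fixed $k$, $\int_{S_t}|\bfu_k-\bfu|=o(t)$ as $t\to0$ and $|D(\bfu_k-\bfu)|(S_t)\to0$. The trace inequality then forces $(\bfu_k-\bfu)_\Omega=0$.
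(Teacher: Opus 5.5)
Your proposal is correct. Its first part is exactly the construction behind the paper's citation of \cite[Theorem 3.9]{AFP}, which is carried out there directly for $\R^N$-valued maps, so your warning against a componentwise argument is well taken.

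For the boundary claim your route differs from the one in the cited remarks of \cite{GiustiMinSur}, which is pointwise. There, the same construction gives $\lim_{\rho\to0}\rho^{-N}\int_{\Omega\cap B_\rho(x)}|\bfu_k-\bfu|\,dy=0$ at every $x\in\partial\Omega$. The Lebesgue-point characterization \eqref{eq:boundarytracethm} of the trace then yields $\bfu_{k,\Omega}=\bfu_\Omega$ by the triangle inequality; the limit there is unique because $\Leb{N}(\Omega\cap B_\rho(x))\ge c\rho^N$ on a Lipschitz domain. You instead apply a strip trace inequality to $\bfu_k-\bfu$, which requires $\int_{S_t}|\bfu_k-\bfu|=o(t)$ rather than $o(\rho^N)$ on boundary balls. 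Both routes rest on the same fact: the approximation errors on the annuli are small compared with their distance from $\partial\Omega$. Your version needs the localized strip inequality as an extra tool but is a single global estimate. The cited one only needs \eqref{eq:boundarytracethm}, which the paper already recalls.

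I have two comments on the details. First, your intermediate paragraph on $L^1$ convergence of the traces is not needed, and its uniformity claim is false as written. In general $m=m(\delta)\to\infty$ as $\delta\to0$, so for fixed small $t$ and small $\delta$ you get $1/(m+1)<t$. Hence $j(t)=2$, and your bound degenerates to $|D\bfu|(\Omega)+\delta$, because the piece $j=1$ lives on all of $\Omega_2$. This could be repaired by keeping $m$ independent of $\delta$, since your weighted estimate $\sum_j\int\psi_j\,d|D\bfu|$ never uses $|D\bfu|(\Omega\setminus\Omega_1)<\delta$. Alternatively, use continuity of the trace under strict convergence \cite{AFP}. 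But none of this is necessary: in your final paragraph, $|D(\bfu_k-\bfu)|(S_t)\to0$ is simply continuity from above of the finite measure $|D(\bfu_k-\bfu)|$ for each fixed $k$, so no uniformity in $\delta$ is involved.

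Second, the construction need not be modified. The set $\Omega_{j+1}\setminus\overline\Omega_{j-1}$ meets $S_t$ only when $j>1/t-m-1$. So the original choice already gives $\int_{S_t}|\bfu_k-\bfu|\le\sum_{j>1/t-m-1}\delta 2^{-j}\le 2^{m+2}\delta\,2^{-1/t}=o(t)$. If you do change the errors to $\delta_j$, the condition to impose is on the tails, $\sum_{j\ge J}\delta_j=o(1/J)$, not a termwise rate such as ``faster than $1/(m+j)$''.
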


For every $M>0$, let
\begin{equation*}\label{gtrun}
T_M(s) := \max\{\min\{s, M\}, -M\}
\,,
\qquad s\in\R.
\end{equation*}
For $\bfu \in \BorelN$, define the componentwise truncation map $\bm\tau_M: \R^N \to \R^N$ by
\begin{equation*}
\bm\tau_M(\bfu):=(T_M(u_1), T_M(u_2), \dots, T_M(u_N))\,,
\end{equation*}
and, for brevity, set
\begin{equation}
\bfu^{M}:= \bm\tau_M(\bfu)\,.
\label{eq:truncfunct}
\end{equation}
Since $T_M$ is 1-Lipschitz and $T_M(0)=0$, it follows that $\bm\tau_M$ is 1-Lipschitz, $\bm\tau_M({\bf 0})={\bf 0}$ and
\begin{equation}
\|\bm\tau_M(\bfu)\| \leq  \|\bfu\|\,.
\label{eq:contraction}
\end{equation}

\begin{proposition}[Properties of the truncated functions] 
\label{p:trunc}
Let $\bfu\in BV(\Omega; \R^N)$, and for every $M>0$, define $\bfu^{M}$ as in \eqref{eq:truncfunct}. Then $\bfu^{M} \in BV(\Omega; \R^N)$ and the following properties hold:
\begin{enumerate}
\item[(i)]
$(\bfu^\pm)^{M} = (\bfu^{M})^\pm \to \bfu^\pm$
$\Haus{N-1}$-a.e.\ in $\Omega$. In particular, $(\bfu^{M})^*\to \bfu^*$,
$\Haus{N-1}$-a.e.\ in $\Omega$;
\item[(ii)]
$|D \bfu^{M}| \leq |D\bfu|$ in the sense of measures, for every $M>0$;
\item[(iii)]
$\|[\bfu^{M}]^\pm\| \leq \|\bfu^\pm\|$ for every $M>0$, hence
\[
\|({\bfu^*})^{M}\| \leq \frac{1}{2}(\|\bfu^+\|+\|\bfu^-\|)
\qquad
\forall M>0 \quad \mbox{ for $\Haus{N-1}$-a.e. $x\in\Omega$;}
\]
\item[(iv)] if $\mu\in \mathcal M(\Omega)$ with $\mu \ll \Haus{N-1}$ and $\bfu\in BV(\Omega;\R^N)\cap L^1_{\mu}(\Omega;\R^N)$, then
$({\bfu^*})^{M} \to {\bfu^*}$ in $L^1_{\mu}(\Omega;\R^N)$.
\end{enumerate}
\label{prop:truncconv}
\end{proposition}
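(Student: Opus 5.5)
The idea is to work componentwise and reduce everything to the scalar truncation theory in \cite[Section 3.6--3.9]{AFP}. The truncation $\bm\tau_M$ acts coordinatewise, so $u^M_i = T_M(u_i)$ for each $i$, and each scalar component $u_i$ lies in $BV(\Omega)$. The scalar chain rule for Lipschitz functions then gives $T_M(u_i)\in BV(\Omega)$. Since a vector function is $BV$ exactly when all its components are, we get $\bfu^M\in BV(\Omega;\R^N)$.

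For (i), the starting point is that $J_\bfu=\bigcup_i J_{u_i}$ up to orientation. On $J_\bfu$ we fix a common normal $\bm\nu_\bfu$, and the one-sided approximate limits in \eqref{f:disc} then act componentwise. The map $T_M$ is $1$-Lipschitz and continuous, so
\[
\medint_{B_r^\pm(x)} |T_M(u_i)-T_M(u_i^\pm(x))|\,dy\le \medint_{B_r^\pm(x)}|u_i-u_i^\pm(x)|\,dy\to0 .
\]
This shows $(\bfu^M)^\pm=\bm\tau_M(\bfu^\pm)$ at every point of $J_\bfu$. The same estimate at Lebesgue points gives $\widetilde{\bfu^M}=\bm\tau_M(\tilde\bfu)$ outside $S_\bfu$. (If $\bfu^+=\bfu^-$ after truncation, the point simply becomes a Lebesgue point of $\bfu^M$, and the formula is still consistent.) Because $\Haus{N-1}(S_\bfu\setminus J_\bfu)=0$, these identities hold $\Haus{N-1}$-a.e. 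The traces $\bfu^\pm$ are finite $\Haus{N-1}$-a.e., so $\bm\tau_M(\bfu^\pm)=\bfu^\pm$ as soon as $M\ge\max_i|u_i^\pm(x)|$. This gives the pointwise convergence, and \eqref{eq:preciserepresentative} then yields $(\bfu^M)^*\to\bfu^*$.

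For (ii), the componentwise inequalities $|DT_M(u_i)|\le|Du_i|$ are not enough, because the Frobenius norm couples the rows. Instead we use the chain rule in a finer form.
\begin{itemize}
\item On the diffuse part: $D^d(T_M(u_i))=\chi_{\{|\tilde u_i|<M\}}D^du_i$, so each row of $D^d\bfu^M$ is either the corresponding row of $D^d\bfu$ or zero. Hence $|D^d\bfu^M|\le|D^d\bfu|$.
\item On the jump part: $D^j\bfu^M=(\bm\tau_M(\bfu^+)-\bm\tau_M(\bfu^-))\otimes\bm\nu_\bfu\Haus{N-1}\res J_\bfu$. The $1$-Lipschitz bound $\|\bm\tau_M(a)-\bm\tau_M(b)\|\le\|a-b\|$ then gives the inequality there.
\end{itemize}
The diffuse and jump parts are mutually singular, so the two estimates add up to $|D\bfu^M|\le|D\bfu|$. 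This is the step I expect to need the most care. One point is to justify the row-selection formula for the diffuse part via the scalar Vol'pert chain rule, \cite[Theorem 3.99]{AFP}. The other is to check that the jump set of $\bfu^M$ is contained in $J_\bfu$, which follows from the identities already proved in (i).

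Property (iii) follows from (i) together with \eqref{eq:contraction}: $\|(\bfu^M)^\pm\|=\|\bm\tau_M(\bfu^\pm)\|\le\|\bfu^\pm\|$. For the stated bound on $(\bfu^*)^M$, note that by (i) it agrees $\Haus{N-1}$-a.e. with $\tfrac12(\bm\tau_M(\bfu^+)+\bm\tau_M(\bfu^-))$, and then apply the triangle inequality.

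For (iv), the measure $\mu$ satisfies $\mu\ll\Haus{N-1}$, so the $\Haus{N-1}$-a.e. convergence in (i) holds $\mu$-a.e. We also have the pointwise domination $\|(\bfu^*)^M-\bfu^*\|\le 2\|\bfu^*\|$ from \eqref{eq:contraction}, where $\bfu^*\in L^1_\mu$ by assumption. Dominated convergence then gives $(\bfu^*)^M\to\bfu^*$ in $L^1_\mu(\Omega;\R^N)$.
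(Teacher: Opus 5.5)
\textbf{The gap is in (iv).} You dominate $\|(\bfu^*)^M-\bfu^*\|$ by $2\|\bfu^*\|$ ``from \eqref{eq:contraction}''. But \eqref{eq:contraction} controls $\bm\tau_M(\bfu^*)$. The object you actually work with is different: in (iii), when you appeal to (i), and in the way the statement is used in Step~3 of Theorem~\ref{thm:main}, it is $(\bfu^M)^*=\tfrac12\bigl(\bm\tau_M(\bfu^+)+\bm\tau_M(\bfu^-)\bigr)$, which in general differs from $\bm\tau_M\bigl(\tfrac12(\bfu^++\bfu^-)\bigr)$. For this object, \eqref{eq:contraction} alone only gives the (iii) bound $\tfrac12(\|\bfu^+\|+\|\bfu^-\|)$. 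That bound need not be $\mu$-integrable when only $\bfu^*\in L^1_\mu$ is assumed.

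A planar example shows this. Let $\bfu=\pm a_k\mathbf e_1$ on the two squares of side $w_k$ above and below disjoint intervals $I_k$ of a segment, and $\bfu=0$ elsewhere. Let $\mu=\sum_k b_k\Haus{1}\res I_k$ with $w_k=2^{-k}$ and $a_k=b_k=2^{k/2}$. Then $\bfu\in BV$ and $\bfu^*=0$ $\mu$-a.e., but $\int\|\bfu^\pm\|\,d\mu=\sum_k a_kb_kw_k=\infty$.

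The missing ingredient is that $\bm\tau_M$ is odd as well as $1$-Lipschitz, so
\[
\|\bm\tau_M(\mathbf a)+\bm\tau_M(\mathbf b)\|=\|\bm\tau_M(\mathbf a)-\bm\tau_M(-\mathbf b)\|\le\|\mathbf a+\mathbf b\| .
\]
This gives $\|(\bfu^M)^*\|\le\|\bfu^*\|$ on $\Omega\setminus(S_\bfu\setminus J_\bfu)$ (with $\bfu^\pm:=\widetilde{\bfu}$ off $S_\bfu$), hence $\mu$-a.e. With this, your dominated-convergence argument is complete.

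The paper's own proof dominates by the (iii) bound, so it has the same defect for a general $\mu$. It is harmless for $\mu=|\DIV\MA|$: there $\mu\res J_\bfu\le C\|\MA\|_\infty\Haus{N-1}\res J_\bfu$, and $\|\bfu^\pm-\bfu^*\|=\tfrac12\|\bfu^+-\bfu^-\|$ is $\Haus{N-1}$-integrable on $J_\bfu$. Once repaired, your choice of dominating function $2\|\bfu^*\|$ is the better one.

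\textbf{(i)--(iii).} Your (i) and (iii) match the paper, which simply quotes \cite[Proposition 3.69(c)]{AFP} for what your one-sided Lipschitz estimate checks by hand.

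For (ii) you take a different route. The paper gets $\bfu^M\in BV$ and $|D\bfu^M|\le|D\bfu|$ in one stroke from the Ambrosio--Dal Maso chain rule for Lipschitz maps $\R^N\to\R^N$. You instead split into diffuse and jump parts and apply the scalar Vol'pert formula row by row. That route is correct, but your stated reason for needing it is not: the componentwise inequalities do suffice. Write $D\bfu=G\,|D\bfu|$ with $|G|=1$ and rows $\mathbf g_i$. The scalar estimate $|Du^M_i|\le|Du_i|=|\mathbf g_i|\,|D\bfu|$ gives $Du^M_i=\mathbf h_i\,|D\bfu|$ with $|\mathbf h_i|\le|\mathbf g_i|$ $|D\bfu|$-a.e. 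Hence $|D\bfu^M|=\bigl(\sum_i|\mathbf h_i|^2\bigr)^{1/2}|D\bfu|\le|D\bfu|$. The Frobenius norm couples the rows, but monotonically in the row norms, so nothing finer is needed.
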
 
\begin{proof}
The fact that $\bfu^M \in BV(\Omega;\R^N)$ for every $M$ and the identities in (i) follow from the general chain rule in \cite[Theorem 2.1]{AmbrosioDalMaso} (and the argument of its proof), using that $\bm\tau_M({\bf 0})={\bf 0}$ and $\bm\tau_M$ is 1-Lipschitz, which also implies (ii). From \cite[Proposition 3.69(c)]{AFP}, if $x\in J_\bfu$, then $x\in J_{\bfu^M}$ if and only if $(\bfu^+)^M(x)\neq (\bfu^-)^M(x)$, in which case $(\bfu^M)^\pm(x) =(\bfu^\pm)^M(x)$. Otherwise, $x\not\in S_{\bfu^M}$ and $\widetilde{\bfu^M}(x) = \widetilde{\bfu}^M(x)=(\bfu^-)^M(x) = (\bfu^+)^M(x)$. Therefore, we can write
\begin{equation*}
(\bfu^M)^\pm(x) =(\bfu^\pm)^M(x) \quad \mbox{ for all $x\in\Omega\backslash(S_\bfu\backslash J_\bfu)$,}
\end{equation*}
hence $\Haus{N-1}$-a.e. in $\Omega$. Note also that for $M> \max\{\|\bfu^+(x)\|, \|\bfu^-(x)\|\}$ we have $(\bfu^M)^\pm(x) = \bfu^\pm(x)$, and this implies that $[\bfu^{M}]^\pm \to \bfu^\pm$ $\Haus{N-1}$-a.e. in $\Omega$. 
Property (iii) follows from \eqref{eq:contraction}, the identities in (i) and the definition of $\bfu^*$. Finally, (iv) is a consequence of (i), the bound in (iii), and the Lebesgue Dominated Convergence Theorem. 
\end{proof}

Let \(E\subset\R^N\) be an \(\Leb{N}\)-measurable set.
For every open set \(\Omega\subset\R^N\) the \emph{perimeter} \(P(E, \Omega)\)
is defined by
\[
P(E, \Omega) := \sup\left\{
\int_E \Div \bm\varphi\, dx:\ \bm\varphi\in C^1_c(\Omega, \R^N),\ \|\bm\varphi\|_\infty\leq 1
\right\}.
\]
We say that \(E\) is of \emph{finite perimeter} in \(\Omega\) if \(P(E, \Omega) < +\infty\).
Denoting by \(\chi_E\) the characteristic function of \(E\),
if \(E\) is a set of finite perimeter in \(\Omega\), then
\(D\chi_E\) is a finite Radon measure in \(\Omega\) and
\(P(E,\Omega) = |D\chi_E|(\Omega)\).
If \(\Omega\subset\R^N\) is the largest open set such that \(E\)
is locally of finite perimeter in \(\Omega\),
we call \emph{reduced boundary} \(\partial^* E\) of \(E\) the set of all points
\(x\in \Omega\) in the support of \(|D\chi_E|\) such that the limit
\[
\nuint_E(x) := \lim_{\rho\to 0^+} \frac{D\chi_E(B_\rho(x))}{|D\chi_E|(B_\rho(x))}
\]
exists in \(\R^N\) and satisfies \(|\nuint_E(x)| = 1\).
The function \(\nuint\colon\partial^* E\to S^{N-1}\) is called
the measure theoretic unit interior normal to \(E\).
We recall the classical result by De Giorgi (see, e.g., \cite[Theorem~3.59]{AFP}) which states that
\(\partial^* E\) is countably \((N-1)\)-rectifiable
and
\begin{equation} 
|D\chi_E| = \Haus{N-1}\res \partial^* E\,.
\label{eq:varchi}
\end{equation}
Given \(E\subset\R^N\) an \(\Leb{N}\)-measurable set, 
for every \(t\in [0,1]\) we denote by \(E^t\) the set
\[
E^t := \left\{x\in\R^N:\
\lim_{\rho\to 0^+} \frac{\LLN(E\cap B_\rho(x))}{\LLN(B_\rho(x))} = t\right\}
\]
of all points where \(E\) has density \(t\).
The sets \(E^0\), \(E^1\), \(\partial^e E := \R^N\setminus (E^0 \cup E^1)\) are called 
respectively the measure theoretic exterior, the measure theoretic interior and
the essential boundary of \(E\).
If \(E\) has finite perimeter in \(\Omega\), Federer's structure theorem
states that
\(\partial^* E\cap\Omega \subset E^{1/2} \subset \partial^e E\)
and \(\Haus{N-1}(\Omega\setminus(E^0\cup \partial^e E \cup E^1)) = 0\)
(see \cite[Theorem~3.61]{AFP}).

\subsection{The pairing for vector fields}\label{pimo}

Let us consider
\begin{equation*}
\DMloc(\Omega;\R^N) :=\left\{ \A\in L^\infty_{\rm{loc}}(\Omega; \R^N):\,\, \Div\A\in \mathcal M(\Omega)\right\},
\end{equation*}
that is, the space of all locally bounded
vector fields whose distributional divergence is a bounded Radon measure in \(\Omega\).

Then, according to \cite{Anz,ChenFrid}, the pairing between $\A\in\DMloc(\Omega;\R^N)$ and $D u$, for a given function $u \in BV_{\rm loc}(\Omega) \cap L^{\infty}_{\rm loc}(\Omega)$ is the distribution given by
\begin{equation}\label{f:pairing}
 \varphi \in C^{\infty}_{c}(\Omega) \to \ban{(\A, Du), \varphi} := - \int_{\Omega}\varphi u^* \, d \Div\A - \int_{\Omega} u \A \cdot \nabla \varphi \, dx,
 \end{equation}
where $u^{*}$ is the precise representative of the $BV$-function $u$. Since the measure $\Div \A$ is absolutely continuous with respect to $\Haus{N-1}$, and $u^*$ is well defined $\Haus{N-1}$-almost everywhere, this definition is well-posed.
As proven in  \cite{ChenFrid}, it holds that
\begin{equation}
u \A\in\DMloc(\Omega;\R^N)\,,
\label{eq:uAmeas}
\end{equation}
and the pairing is a Radon measure in \(\Omega\), absolutely continuous with respect to \(|Du|\), satisfying the following Leibniz-type formula:
\begin{equation} 
(\A, Du) = \Div (u \A) - u^{*} \Div \A. 
\label{f:anz}
\end{equation} 
In \cite{CD3} the authors, redefining the pairing distribution 
for functions $u \in BV_{\rm loc}(\Omega)$ such that $u^* \in L^1_{|\Div \A|,{\rm loc}}(\Omega)$ (thus requiring a dependence between the scalar functions and the vector field), 
still obtained a measure with the same properties as before, and proved a coarea formula and a Leibniz rule, and finally achieved generalizations of the Gauss--Green formulas.
The following representation result can be found in \cite[Theorem 4.12]{CD3} and \cite[Theorem 3.5]{CCDM}.

\begin{theorem}\label{t:pairingCD3}
Let \(\A\in\DMloc(\Omega;\R^N)\) and \(u\in BV_{\rm loc}(\Omega)\cap L^1_{|\Div \A|,{\rm loc}}(\Omega)\).
Then the measure \((\A, Du)\) admits the following decomposition:
\begin{itemize}
	\item[(i)]
	absolutely continuous part: 
	\((\A, Du)^a = \A \cdot \nabla u\, \Leb{N}\);
		
	\item[(ii)]
	jump part:
	\(\displaystyle
	(\A, Du)^j = 
	\frac{\Trp{\A}{J_u}+\Trm{\A}{J_u}}{2}
	\, (u^+-u^-) \, \Haus{N-1} \res J_u
	\);
	
	\item[(iii)]
	Cantor part:
	\begin{equation*}\label{eq:cantorpart}
	(\A, Du)^c\res (\Omega\setminus S_{\A})= \widetilde{\A} \cdot D^c u\res (\Omega\setminus S_{\A}),
	\end{equation*}
	where \(S_{\A}\) is the approximate discontinuity set  of \(\A\).

\end{itemize}
\end{theorem}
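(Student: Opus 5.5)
The statement is Theorem~\ref{t:pairingCD3}, which the paper quotes from \cite[Theorem 4.12]{CD3} and \cite[Theorem 3.5]{CCDM}. If I were to prove it directly, I would first reduce to bounded $u$ by truncation. The identity $(\A,Du)=\Div(u\A)-u^*\Div\A$ holds for the truncations $u^M=T_M(u)$. Proposition~\ref{p:trunc}(iv) applies with $\mu=|\Div\A|\ll\Haus{N-1}$, so $(u^M)^*\to u^*$ in $L^1_{|\Div\A|,{\rm loc}}$, while $u^M\to u$ in $L^1_{\rm loc}$. Hence $(\A,Du^M)\to(\A,Du)$ as distributions. I would also aim for the bound $|(\A,Du^M)|\le\|\A\|_\infty|Du^M|\le\|\A\|_\infty|Du|$, using Proposition~\ref{p:trunc}(ii). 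On $\{|u^+|,|u^-|<M\}$ the truncated and untruncated pairings coincide. So the three decompositions pass to the limit, and it suffices to treat $u\in BV\cap L^\infty$.

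For bounded $u$ I would mollify: $u_\eps=\eta_\eps*u$. For smooth $u_\eps$ one has $(\A,Du_\eps)=\A\cdot\nabla u_\eps\,\Leb{N}$. Moreover $u_\eps\to u^*$ pointwise $\Haus{N-1}$-a.e. by \eqref{eq:convtoprecise}, and $|u_\eps|\le\|u\|_\infty$. Dominated convergence against $|\Div\A|$ then gives $(\A,Du_\eps)\rightharpoonup(\A,Du)$. Since $|\A\cdot\nabla u_\eps|\le\|\A\|_\infty|\nabla u_\eps|$ and $|Du_\eps|\rightharpoonup|Du|$, the limit satisfies $|(\A,Du)|\le\|\A\|_\infty|Du|$. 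So the pairing is absolutely continuous with respect to $|Du|$, and it remains to identify the densities on the three disjoint pieces.

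\emph{Absolutely continuous part.} I would localize at Lebesgue points $x$ of $\A$ at which $u$ is approximately differentiable. There I would blow up, setting $u_{x,r}(y)=(u(x+ry)-\tilde u(x))/r$ and $\A_{x,r}(y)=\A(x+ry)$. The rescaled divergence has total variation on $B_1$ equal to $r^{1-N}|\Div\A|(B_r(x))$. This vanishes as $r\to0$ at $\Leb{N}$-a.e. $x$, because $|\Div\A|$ charges no positive Lebesgue set outside an $\Leb{N}$-null set in density. So the pairing density is $\A(x)\cdot\nabla u(x)$. Alternatively, and more cleanly, I would use the locality property $(\A,Du)\res\{u=v\}=(\A,Dv)$ for Lipschitz $v$ coinciding with $u$ on large sets. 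This is the Lusin-type approximation of $BV$ functions by Lipschitz functions, for which the pairing is $\A\cdot\nabla v$.

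\emph{Jump part.} $J_u$ is countably rectifiable, so I would cover it by oriented Lipschitz graphs $\Sigma$. For each such $\Sigma$ I would use the normal traces $\Trace{\A}{\Sigma}$ from the Gauss--Green theory for $\DMloc$ fields in \cite{CD3}. On each side of $\Sigma$ I would apply the Leibniz formula on the half-domains, then subtract. The result is $\Div(u\A)\res\Sigma=(u^+\Trp{\A}{\Sigma}-u^-\Trm{\A}{\Sigma})\Haus{N-1}$ and $\Div\A\res\Sigma=(\Trp{\A}{\Sigma}-\Trm{\A}{\Sigma})\Haus{N-1}$, with sign conventions fixed by the orientation. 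Subtracting $u^*=\frac{u^++u^-}{2}$ times the second identity from the first gives exactly (ii).

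\emph{Cantor part.} At points outside $S_\A$ I would again blow up. Since $|D^cu|$-a.e. point has $r^{1-N}|Du|(B_r)\to\infty$, the contribution $u^*\Div\A$ rescales negligibly relative to $|Du|(B_r)$. The field $\A_{x,r}$ converges in $L^1$ to the constant $\widetilde\A(x)$. Weak convergence of the normalized $Du$ then yields density $\widetilde\A(x)\cdot\frac{dD^cu}{d|D^cu|}(x)$.

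I expect the Cantor part to be the main obstacle. Strong $L^1$ convergence of $\A_{x,r}$ must be paired with merely weak convergence of the rescaled derivatives. To handle this I would use the product bound $|(\A-\widetilde\A(x),Du)|\le\|\A-\widetilde\A(x)\|$-type estimates, localized in $L^\infty$-weak$^*$. I would also restrict to points where $|D^cu|(B_r)\gg r^{N-1}$, so that the divergence term is lower order.
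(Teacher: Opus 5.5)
The paper gives no proof of this statement (it is quoted from \cite{CD3} and \cite{CCDM}), so your proposal has to stand on its own. Much of it is sound in outline: the bound $|(\A,Du)|\le\|\A\|_\infty|Du|$, the Lusin-type argument for (i), and your route to (ii). For (ii) the real content is the trace identity $\Trace{u\A}{\Sigma}=u^\pm\,\Trace{\A}{\Sigma}$, the scalar case of Proposition~\ref{p:tracesb} (proved there by blow-up). Combining it with the Leibniz formula is exactly how the paper gets the jump part of its tensor analogue, Theorem~\ref{t:pairing}(ii).

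The genuine gap is in (iii). Your claim that $r^{1-N}|Du|(B_r(x))\to\infty$ at $|D^cu|$-a.e.\ $x$ is false. The set where $\limsup_{r\to0}r^{1-N}|Du|(B_r(x))>0$ is $\sigma$-finite with respect to $\Haus{N-1}$, and $D^cu$ vanishes on such sets, so this ratio tends to $0$ $|D^cu|$-a.e.; only $r^{-N}|Du|(B_r(x))$ diverges. Moreover, no scaling comparison can make $u^*\Div\A$ negligible at Cantor points. Take $N=2$, $c$ the Cantor--Vitali function, $u(x)=c(x_1)$ and $\A(x)=(c(x_1),0)$. Then $|\Div\A|=|Du|=Dc\otimes\Leb{1}$ and $u^*\Div\A=(\A,Du)=c(x_1)\,Dc\otimes\Leb{1}$. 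So the mechanism you invoke to discard the divergence term does not exist.

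What is negligible is only $(u^*-c_r)\Div\A$. Here you test with $\varphi((\cdot-x)/r)$ and subtract $c_r:=u_{B_r(x)}\to\widetilde u(x)$ from $u$ (constants do not change the pairing). Controlling this term needs an ingredient absent from your sketch. At $|D^cu|$-a.e.\ $x$, the point $x$ is a Lebesgue point of $u^*$ with respect to $\mu:=|\Div\A|+|Du|$, with $u^*(x)=\widetilde u(x)$. Also $\limsup_{r\to0}|\Div\A|(B_r(x))/|Du|(B_r(x))<\infty$ by Besicovitch. Together these give $\int_{B_r(x)}|u^*-c_r|\,d|\Div\A|=o(|Du|(B_r(x)))$.

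The term you single out as the main obstacle is in fact routine. By H\"older and the Sobolev--Poincar\'e inequality it is bounded by $\frac{C}{r}|Du|(B_r(x))\,\|\A-\widetilde\A(x)\|_{L^N(B_r(x))}$. This norm is $o(r)$ by interpolating the $L^1$ approximate continuity of $\A$ with its $L^\infty$ bound. A doubling selection of radii then gives the density $\widetilde\A(x)\cdot\frac{dDu}{d|Du|}(x)$.

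Alternatively, your $r^{N-1}$ heuristic becomes correct after slicing. By the coarea formula for the pairing in \cite{CD3}, it suffices to show $\Trace{\A}{\partial^*E_t}=\widetilde\A\cdot\nuint_{E_t}$, with $E_t:=\{u>t\}$, at points of $\partial^*E_t\setminus(S_{\A}\cup\jump{\A})$. Here $\jump{\A}$ is the $\sigma$-finite set where $|\Div\A|$ has positive upper $(N-1)$-density, which $|D^cu|$ does not charge. At such points $|D\chi_{E_t}|(B_r)\sim r^{N-1}$ while $|\Div\A|(B_r)=o(r^{N-1})$.

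Two smaller points. First, in your blow-up for (i), $r^{1-N}|\Div\A|(B_r(x))\to0$ is not enough, because $u^*_{x,r}$ is only $O(1/r)$; the Lusin route avoids this. Second, in the truncation step weak$^*$ convergence would not transfer the decomposition. What does is convergence in total variation, which follows from the bound $|(\A,D(u-u^M))|\le\|\A\|_\infty|D(u-u^M)|$ for the unbounded $u-u^M$, obtained by lower semicontinuity as in Step~3 of Theorem~\ref{thm:main}.
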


\subsection{Matrix-valued fields with measure divergence}

We consider the space of essentially bounded matrix fields with measure divergence:
\begin{equation*}
\DMA(\Omega;\mathbb{R}^{N\times N}) :=\left\{ \MA \in L^\infty(\Omega;\mathbb{R}^{N\times N}) : \DIV\MA\in \mathcal M(\Omega;\R^N)\right\}.
\end{equation*}

The following lemma provides an approximation result for stress fields. See \cite[Theorem 1.2]{ChenFrid} for the analogous result concerning vector fields, and \cite[Lemma 2.3]{KT} for more regular stress fields. 

\begin{lemma}
Given $\MA\in \DMA(\Omega;\mathbb{R}^{N\times N})$, there exists $\{\MA_k\}\subset C^\infty({\Omega};\mathbb{R}^{N\times N})$ such that 
\begin{enumerate}
\item[(i)] $\MA_k \to \MA$ in $L^1(\Omega;\mathbb{R}^{N\times N})$;
\item[(ii)] $\displaystyle \lim_{k\to+\infty} \int_\Omega |\DIV\MA_k| =  |\DIV\MA|(\Omega)$.
\end{enumerate}
\label{lem:approxim}
\end{lemma}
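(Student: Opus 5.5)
The plan is the Anzellotti–Giaquinta / Chen–Frid strategy: mollify, then cut off near the boundary. Extend $\MA$ by zero outside $\Omega$ and fix a locally finite partition of unity $\{\zeta_j\}_{j\ge 1}$ subordinate to an exhaustion of $\Omega$. Concretely, take open sets $\Omega_j \Subset \Omega_{j+1}$ with $\bigcup_j\Omega_j=\Omega$, let $\zeta_j\in C^\infty_c$ be supported in $\Omega_{j+1}\setminus\overline{\Omega_{j-1}}$, and require $\sum_j\zeta_j\equiv 1$ on $\Omega$. For a given $k$ choose mollification radii $\eps_j$ small enough that $\eta_{\eps_j}*(\zeta_j\MA)$ is still supported in $\Omega_{j+2}\setminus\overline{\Omega_{j-2}}$, and define
\[
\MA_k:=\sum_{j}\eta_{\eps_j}*(\zeta_j\MA).
\]
This is a locally finite sum of smooth fields, so $\MA_k\in C^\infty(\Omega;\mathbb{R}^{N\times N})$. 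Everything is done row by row (each row $\MA_i$ is a vector field with $\Div\MA_i$ a finite measure, by \eqref{eq:totvariationineq}), but the total variation has to be controlled for the vector measure as a whole.

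For (i), pick $\eps_j$ so that $\|\eta_{\eps_j}*(\zeta_j\MA)-\zeta_j\MA\|_{L^1}<2^{-j}/k$. Summing over $j$ and using $\sum_j\zeta_j=1$ gives $\|\MA_k-\MA\|_{L^1(\Omega)}\le 1/k$.

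For (ii), use that $\DIV$ commutes with convolution and the product rule $\DIV(\zeta_j\MA)=\zeta_j\DIV\MA+\MA\nabla\zeta_j$. This gives
\[
\DIV\MA_k=\sum_j\eta_{\eps_j}*(\zeta_j\DIV\MA)+\sum_j\Big(\eta_{\eps_j}*(\MA\nabla\zeta_j)-\MA\nabla\zeta_j\Big),
\]
because $\sum_j\MA\nabla\zeta_j=\MA\nabla(\sum_j\zeta_j)=0$.
\begin{itemize}
\item \emph{First sum.} Convolution does not increase total variation, so $\int|\eta_{\eps_j}*(\zeta_j\DIV\MA)|\le\int\zeta_j\,d|\DIV\MA|$. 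Since $\zeta_j\ge0$ and $\sum_j\zeta_j=1$, the first sum is bounded in total variation by $|\DIV\MA|(\Omega)$.
\item \emph{Second sum.} Each $\MA\nabla\zeta_j$ lies in $L^1$, so we may also require $\|\eta_{\eps_j}*(\MA\nabla\zeta_j)-\MA\nabla\zeta_j\|_{L^1}<2^{-j}/k$. The second sum then has norm at most $1/k$.
\end{itemize}
Hence $\limsup_k\int_\Omega|\DIV\MA_k|\le|\DIV\MA|(\Omega)$.

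The reverse inequality comes from lower semicontinuity. By (i), $\MA_k\to\MA$ in $L^1$, so $\DIV\MA_k\to\DIV\MA$ in distributions. For any $\boldsymbol\varphi\in C^1_c(\Omega;\R^N)$ with $\|\boldsymbol\varphi\|_\infty\le1$ we have
\[
\int\boldsymbol\varphi\cdot d\DIV\MA=\lim_k\int\boldsymbol\varphi\cdot\DIV\MA_k\,dx\le\liminf_k\int_\Omega|\DIV\MA_k|.
\]
Taking the supremum over such $\boldsymbol\varphi$ gives $|\DIV\MA|(\Omega)\le\liminf_k\int_\Omega|\DIV\MA_k|$, which together with the previous bound proves (ii).

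The main point needing care is the total variation estimate for the vector measure. One must use the Frobenius/Euclidean norm consistently, and check that $|\eta*\boldsymbol\mu|\le\eta*|\boldsymbol\mu|$ holds pointwise for vector measures. This follows from Jensen's inequality applied to the polar decomposition $\boldsymbol\mu=\boldsymbol\sigma|\boldsymbol\mu|$ with $|\boldsymbol\sigma|=1$. The rest is routine.
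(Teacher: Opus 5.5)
Your proposal is correct and is the same Anzellotti--Giaquinta/Chen--Frid construction that the paper relies on: partition of unity, variable-radius mollification, and lower semicontinuity for the reverse inequality. The paper just cites \cite[Theorems 1.1 and 1.2]{ChenFrid} and says the argument adapts to matrices, and your explicit use of $|\eta*\boldsymbol\mu|\le\eta*|\boldsymbol\mu|$ to control the full vector measure is exactly that adaptation. (One harmless slip: finiteness of each $\Div\MA_i$ follows from $|(\DIV\MA)_i|\le|\DIV\MA|$, not from \eqref{eq:totvariationineq}, which is the opposite inequality; in any case you never use the row-by-row reduction.)
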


\begin{proof}
The existence of the sequence $\{\MA_k\}$ satisfying $(i)$ and $(ii)$ can be obtained by adapting the arguments of \cite[Theorems 1.1 and 1.2]{ChenFrid} to the matrix-valued setting. 
\end{proof}

The next result shows that the divergence measure of $\MA$ is absolutely continuous with respect to $\Haus{N-1}$. 

\begin{proposition}\label{ABS}
Let $\MA \in \DMA(\Omega;\mathbb{R}^{N\times N})$, and let $B\subset\Omega$ be a Borel measurable set such that $\Haus{N-1}(B)=0$. Then $|\DIV \MA|(B)=0$. 
\label{prop:absolutecont}
\end{proposition}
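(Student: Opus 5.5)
The plan is to reduce to the scalar (vector-field) case, applied row by row. Write $\mathbf A^i$ for the $i$-th row of $\MA$, viewed as a vector field. By definition, the $i$-th component of $\DIV\MA$ is $\Div \mathbf A^i$ (the divergence in the $j$ index of $A_{ij}$). Since $\MA\in L^\infty(\Omega;\mathbb{R}^{N\times N})$, each $\mathbf A^i$ lies in $L^\infty(\Omega;\R^N)$. The hypothesis $\DIV\MA\in\mathcal M(\Omega;\R^N)$ says precisely that each scalar distribution $\Div\mathbf A^i$ is a finite Radon measure on $\Omega$. Hence $\mathbf A^i\in\DMloc(\Omega;\R^N)$ for every $i=1,\dots,N$. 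This identification is immediate from the definitions and involves no real work.

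Next, I would invoke the known scalar result: for a bounded divergence-measure vector field $\A$, the measure $\Div\A$ is absolutely continuous with respect to $\Haus{N-1}$. This is the result of Chen--Frid \cite{ChenFrid}, already used implicitly in Section~\ref{pimo} to justify the well-posedness of \eqref{f:pairing}. If I had to justify it directly, I would use the standard argument. Cover a compact $K\subset B$ with $\Haus{N-1}(K)=0$ by finitely many balls $B_{r_k}(x_k)$ with $\sum r_k^{N-1}<\eps$. Take a cutoff $\varphi$ equal to $1$ on $K$, supported in the union of the doubled balls, with $|\nabla\varphi|\le C/r_k$ on each ball. Then
\[
\Bigl|\int\varphi\, d\Div\A\Bigr| = \Bigl|\int \A\cdot\nabla\varphi\,dx\Bigr|\le C\|\A\|_\infty\sum_k r_k^{-1} r_k^N\le C\|\A\|_\infty\eps .
\]
This controls $\Div\A$ on $K$ up to the mass of $\Div\A$ on the small neighbourhood minus $K$. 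That remainder tends to zero by outer regularity. Applying the same estimate to the positive and negative parts of $\Div\A$, via a Hahn decomposition and inner regularity, gives $|\Div\A|(K)=0$, and then $|\Div\A|(B)=0$ by inner regularity. Because of the Hahn-decomposition step, I regard this as the main (though classical) obstacle, and I would simply cite \cite{ChenFrid} for it.

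Applying this result to each row gives $|\Div\mathbf A^i|(B)=0$ for $i=1,\dots,N$. Finally, the total variation estimate \eqref{eq:totvariationineq}, read with the components of $\DIV\MA$ (here the rows, consistently with the paper's convention $\Div\mathbf A_i=\sum_j\partial_jA_{ij}$), yields
\[
|\DIV\MA|(B)\le\sum_{i=1}^N|\Div\mathbf A^i|(B)=0,
\]
which concludes the proof.
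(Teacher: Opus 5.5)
Your proposal is correct and follows essentially the same route as the paper. The paper also applies \cite[Proposition 3.1]{ChenFrid} to each scalar measure $\Div\MA_i$, where $\Div\MA_i=\sum_j\partial_j A_{ij}$ is the same row-wise divergence you use, and then concludes via the total variation estimate \eqref{eq:totvariationineq}, citing the scalar result exactly as you propose rather than reproving it.
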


\begin{proof}
In view of \eqref{eq:totvariationineq}, it will suffice to apply \cite[Proposition 3.1]{ChenFrid} to each real-valued measure $|\Div \MA_i|$, $i=1,\dots,N$. 
\end{proof}

As a consequence, the set
\begin{equation}\label{f:jump}
\jump{\MA} 
:= \left\{
x\in\Omega:\
\limsup_{r \to 0^+}
\frac{|\DIV \MA| (B_r(x))}{r^{N-1}} > 0
\right\},
\end{equation} 
is a Borel set, \(\sigma\)-finite with respect to \(\Haus{N-1}\). Moreover,
the measure \(\DIV \MA\) admits the decomposition
\[
\DIV\MA = \DIV^a\MA + \DIV^c\MA + \DIV^j\MA,
\]
where \(\DIV^a\MA\) is absolutely continuous with respect to \(\Leb{N}\),
\(\DIV^c\MA (B) = 0\) for every Borel set \(B\) with \(\Haus{N-1}(B) < +\infty\),
and
\begin{equation}\label{divj}
\DIV^j\MA = {\bf f}\, \Haus{N-1}\res\jump{\MA}
\end{equation}
for some Borel vector-valued function \({\bf f}\)
(see \cite[Proposition~2.5]{ADM}).

\subsection{Normal traces}
\label{distrtraces}
We first recall the definition of traces of the normal component of a vector field \(\C\in \DM(\Omega;\R)\), which can be defined as distributions
\(\Trace{\C}{\Sigma}\)
on every oriented countably \(\Haus{N-1}\)--rectifiable set
\(\Sigma\subset\Omega\)
(see \cite{AmbCriMan,Anz,ChenFrid}).
According to \cite{AmbCriMan}, given a domain \(\Omega'\Subset\Omega\) of class \(C^1\), the trace of the normal component of \(\C\) on \(\partial\Omega'\) is defined in the distributional sense by
\begin{equation}\label{f:disttr}
\pscal{\Trace[]{\C}{\partial\Omega'}}{\varphi}
:= \int_{\Omega'} \C\cdot \nabla\varphi\, dx + \int_{\Omega'} \varphi\, d\Div\C,
\qquad
\forall\varphi\in C^\infty_c(\Omega).
\end{equation}
It is shown in \cite[Proposition 3.2]{AmbCriMan} that this distribution is in fact induced by an \(L^\infty\) function on \(\partial\Omega'\),
which we continue to denote by \(\Trace[]{\C}{\partial\Omega'}\), and satisfies the estimate
\[
\|\Trace[]{\C}{\partial\Omega'}\|_{L^\infty(\partial\Omega')}
\leq \|\C\|_{L^\infty(\Omega';\R)}.
\]
Now, since \(\Sigma\) is oriented and countably $\Haus{N-1}$--rectifiable,
we can find countably many {oriented} \(C^1\) hypersurfaces \(\Sigma_i\),
with classical normal vectors \(\bm\nu_{\Sigma_i}\),
and pairwise disjoint Borel sets \(E_i\subseteq \Sigma_i\)
such that \(\Haus{N-1}(\Sigma\setminus \bigcup_i E_i) = 0\).
Moreover, we may assume, without loss of generality, that for each $i$ there exist two open, bounded sets \(\Omega_i, \Omega'_i\) with \(C^1\) boundaries
and exterior normal vectors \(\bm\nu_{\Omega_i}\) and \(\bm\nu_{\Omega_i'}\) respectively,
such that
\(E_i\subseteq \partial\Omega_i \cap \partial\Omega'_i\)
and
\[
\bm\nu_{\Sigma_i}(x) = \bm\nu_{\Omega_i}(x) = -\bm\nu_{\Omega'_i}(x)
\qquad \forall x\in E_i.
\]
We then define the orientation on \(\Sigma\) by setting
\(\bm\nu_{\Sigma}(x) := \bm\nu_{\Sigma_i}(x)\) for
\(\Haus{N-1}\)-a.e.\ $x\in E_i$.
Using the localization property established in \cite[Proposition 3.2]{AmbCriMan},
the normal traces of \(\C\) on \(\Sigma\) are defined by
\[
\Trm{\C}{\Sigma} := \Tr(\C, \partial\Omega_i),
\quad
\Trp{\C}{\Sigma} := -\Tr(\C, \partial\Omega'_i),
\qquad
\Haus{N-1}-\text{a.e.\ on}\ E_i.
\]
These two normal traces belong to
\(L^{\infty}(\Sigma, \Haus{N-1}\res\Sigma)\) (see \cite[Proposition 3.2]{AmbCriMan})
and satisfy
\begin{equation}\label{mmm}
\Div \C \res\Sigma =
\left[\Trp{\C}{\Sigma} - \Trm{\C}{\Sigma}\right]
\, \Haus{N-1} \res\Sigma\,.
\end{equation}

For our purposes, we need to extend definition \eqref{f:disttr} to a matrix-valued field. 
Given a domain \(\Omega'\) as above, we define
the trace of the normal component of \(\MA\in\DMA(\Omega;\mathbb{R}^{N\times N})\) on \(\partial\Omega'\) in the distributional sense as the $n$-tuple of scalar distributions
\begin{equation*}
\Tracev[]{\MA}{\partial\Omega'}:= \biggl(\Trace[]{\MA_1}{\partial\Omega'}, \Trace[]{\MA_2}{\partial\Omega'},\dots, \Trace[]{\MA_N}{\partial\Omega'}\biggr)\,.
\end{equation*}
Then, recalling \eqref{f:disttr}, we have
\begin{equation}\label{f:disttrv}
\begin{split}
\pscal{\Tracev[]{\MA}{\partial\Omega'}}{\bm \varphi}
& = \sum_{i=1}^N \pscal{\Trace[]{\MA_i}{\partial\Omega'}}{\varphi_i} \\
&=\int_{\Omega'} \MA:\nabla\bm\varphi \, dx + \int_{\Omega'} \bm\varphi \cdot d\DIV\MA,
\qquad
\forall \bm\varphi\in C^\infty_c(\Omega;\R^N).
\end{split}
\end{equation}
Therefore, this distribution is induced by a vector-valued \(L^\infty\) function on \(\partial\Omega'\) (which we still
denote by \(\Tracev[]{\MA}{\partial\Omega'}\)), the estimate
\[
\|\Tracev[]{\MA}{\partial\Omega'}\|_{L^\infty(\partial\Omega';\mathbb{R}^{N})}
\leq \|\MA\|_{L^\infty(\Omega';\mathbb{R}^{N\times N})}
\label{eq:normtraceestimate}
\]
holds, together with the localization property
\begin{equation*}
\Tracev[]{\MA}{\partial\Omega_1} = \Tracev[]{\MA}{\partial\Omega_2} \quad \Haus{N-1}-\text{a.e.\ on}\ E,
\end{equation*}
if $E$ is a Borel subset of $\partial\Omega_1 \cap \partial\Omega_2$ and $\bm\nu_{\Omega_1}=\bm\nu_{\Omega_2}$ on $E$. This allows us to define the normal traces of \(\A\) on \(\Sigma\) by
\[
\Trmv{\MA}{\Sigma} := \Trv(\MA, \partial\Omega_i),
\quad
\Trpv{\MA}{\Sigma} := -\Trv(\MA, \partial\Omega'_i),
\qquad
\Haus{N-1}-\text{a.e.\ on}\ E_i\,,
\]
thus extending \cite[Definition 3.3]{AmbCriMan} to a matrix-valued field. 
These two normal traces belong to
\(L^{\infty}_{{\small \Haus{N-1}\res\Sigma}}(\Sigma; \R^N)\) 
and, arguing as for \cite[Proposition 3.4(ii)]{AmbCriMan} with minor changes, we have 
\begin{equation*}\label{mmmv}
\DIV^j \MA \res\Sigma =
\left[\Trpv{\MA}{\Sigma} - \Trmv{\MA}{\Sigma}\right]
\, \Haus{N-1} \res\Sigma\,.
\end{equation*}
On the other hand, it holds that
\begin{equation*}
\Trp{\MA}{\Sigma}\not=\Trm{\MA}{\Sigma}
\qquad
\text{$\Haus{N-1}$-a.e. in $\Sigma\cap \jump{\MA}$}.
\end{equation*}
By \eqref{divj}, it then follows that for every Borel set $B\subset \Omega$ 
\begin{equation*}\label{Divj2}
|\DIV^j \MA\res\Sigma|(B)=0 \ \ \ \Rightarrow\ \ \ \Haus{N-1}\res \Theta_{\MA}(B)=0,
\end{equation*}
that is,
\begin{equation*}\label{Divj3}
\Haus{N-1}\res \Theta_{\MA} \ll |\DIV^j \MA|.
\end{equation*}

\section{A new pairing for tensor fields} \label{sec:newpairing}

Let us introduce a new pairing between a matrix-valued field $\MA$ and the matrix derivative of a function in $BV(\Omega;\R^N)$. In analogy with \cite{CD3}, for $\MA \in {\rm DM}^\infty(\Omega;\mathbb{R}^{N\times N})$, we introduce the function space
\begin{equation*}
BV(\Omega;\R^N) \cap L^1_{|\DIV\MA|}(\Omega;\R^N) := \left\{\bfu \in BV(\Omega;\R^N):\,\, \bfu^* \in  L^1_{|\DIV\MA|}(\Omega;\R^N)\right\} \,. 
\end{equation*}

Then, for every \(\MA\in\DMA(\Omega;\mathbb{R}^{N\times N})\) and $\bfu\in BV(\Omega;\R^N) \cap L^1_{|\DIV\MA|}(\Omega;\R^N)$ we define the linear functional
\((\MA: D\bfu) \colon C^\infty_c(\Omega) \to \R\) by
\begin{equation}\label{eq:pairingvettoriale}
\pscal{(\MA:D\bfu)}{\varphi} :=
-\int_\Omega \varphi \bfu^*\cdot d \DIV\MA - \int_\Omega  \MA:[\bfu \otimes\nabla\varphi]\, dx. 
\end{equation}
It is easy to check that
\begin{equation}
(\MA:D\bfu)=\sum_{i=1}^N(\A_i, Du_i)\,,
\label{eq:pairingsum}
\end{equation}
where $(\A_i,Du_i)$ is the pairing \eqref{f:pairing}. 
Similarly, we can define
\begin{equation*}\label{eq:pairingvettoriale2}
\pscal{\underline{(\MA:D\bfu)}}{\varphi} :=
-\int_\Omega \varphi \bfu^*\cdot d \underline{\DIV}\MA - \int_\Omega  \, \MA:[\bfu \otimes\nabla\varphi]\, dx. 
\end{equation*}
and we have
$$
\underline{(\MA:D\bfu)}=\sum_{i=1}^N(\A^i, Du_i).
$$
Moreover, $\underline{(\MA:D\bfu)}=(\MA:D\bfu)$ whenever $\MA$ is symmetric.
In the following we focus on $(\MA:D\bfu)$; the same arguments and results can be applied to $\underline{(\MA:D\bfu)}$.

\begin{remark}
From the classical product rule \eqref{bbbb} it is immediate to check that, if $\MA\in C^1(\Omega;\mathbb{R}^{N\times N})$ and $\bfu\in C^1(\Omega;\R^N)$, then   
\begin{equation*}\label{bbbb1}
(\MA:D\bfu)=\MA:\nabla\bfu\,\Leb{N}
\end{equation*}
in the sense of measures.
\end{remark}

The structural properties of the space $BV(\Omega;\mathbb{R}^N)$, in particular the fact that each component of a vector-valued $BV$ function still belongs to $BV$, are crucial in order to deduce the following result from \cite[Theorem 4.12]{CD3}, in view of property \eqref{eq:pairingsum}. However, we prefer to provide an independent proof for the reader’s convenience and for future reference.

\begin{theorem}\label{thm:main}
Let $\MA\in\DMA(\Omega;\mathbb{R}^{N\times N})$ and $\bfu\in BV(\Omega;\R^N) \cap L^1_{|\DIV\MA|}(\Omega;\R^N)$. Then, the distribution \((\MA: D\bfu)\) defined in \eqref{eq:pairingvettoriale} is a Radon measure in \(\Omega\), absolutely continuous with respect to \(|D\bfu|\), and it holds that for every open set $U\Subset\Omega$
\begin{equation}
|(\MA: D\bfu)|(B) \leq \|\MA\|_{L^\infty(U)} |D\bfu|(B) \quad \mbox{for every Borel set $B \subset U$.}
\label{eq:abscont}
\end{equation}
Moreover, $\Div(\MA^T \bfu)\in \mathcal{M}(\Omega;\R^N)$ and the equation
\begin{equation}\label{f:anz1}
\Div(\MA^T \bfu)= \bfu^* \cdot \DIV\MA + (\MA: D\bfu)
\end{equation}
holds in the sense of measures in \(\Omega\). 
\end{theorem}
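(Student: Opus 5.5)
The plan is to prove the statement directly: first in the bounded case by mollifying $\bfu$, then in general by truncation via Proposition \ref{p:trunc}.

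\emph{Step 1: bounded $\bfu$.} Assume first $\bfu\in BV(\Omega;\R^N)\cap L^\infty(\Omega;\R^N)$. Let $\bfu_\varepsilon=\eta_\varepsilon*\bfu$ as in \eqref{eq:mollification}, taken on a neighbourhood of ${\rm supp}\,\varphi$. For smooth $\bfu_\varepsilon$, a direct distributional computation from \eqref{f:disttrv} gives
\[
-\int_\Omega \varphi\,\bfu_\varepsilon\cdot d\DIV\MA-\int_\Omega \MA:[\bfu_\varepsilon\otimes\nabla\varphi]\,dx=\int_\Omega\varphi\,\MA:\nabla\bfu_\varepsilon\,dx .
\]
This holds because $\varphi\bfu_\varepsilon$ is an admissible test field, and $\nabla(\varphi\bfu_\varepsilon)=\bfu_\varepsilon\otimes\nabla\varphi+\varphi\nabla\bfu_\varepsilon$.

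We then let $\varepsilon\to0$ in each term.
\begin{itemize}
\item In the second term, $\bfu_\varepsilon\to\bfu$ in $L^1_{\rm loc}$.
\item In the first term, $\bfu_\varepsilon\to\bfu^*$ pointwise off $S_\bfu\setminus J_\bfu$ by \eqref{eq:convtoprecise}. This exceptional set is $\Haus{N-1}$-null, hence $|\DIV\MA|$-null by Proposition \ref{prop:absolutecont}. Since $\|\bfu_\varepsilon\|_\infty\le\|\bfu\|_\infty$, dominated convergence applies.
\end{itemize}
Hence $\pscal{(\MA:D\bfu)}{\varphi}=\lim_\varepsilon\int\varphi\,\MA:\nabla\bfu_\varepsilon\,dx$. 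Using $|\varphi\,\MA:\nabla\bfu_\varepsilon|\le|\varphi|\,\|\MA\|_{L^\infty(U)}|\nabla\bfu_\varepsilon|$ and $\limsup_\varepsilon\int|\varphi||\nabla\bfu_\varepsilon|\le\int|\varphi|\,d|D\bfu|$ (standard, since $|\nabla\bfu_\varepsilon|\le\eta_\varepsilon*|D\bfu|$), we obtain
\[
|\pscal{(\MA:D\bfu)}{\varphi}|\le\|\MA\|_{L^\infty(U)}\int|\varphi|\,d|D\bfu|
\]
for $\varphi\in C_c^\infty(U)$. A distribution with this bound extends to a Radon measure. Its total variation is dominated by $\|\MA\|_{L^\infty(U)}|D\bfu|$ on open subsets of $U$, and by outer regularity on all Borel $B\subset U$; this is \eqref{eq:abscont}. (The Frobenius norm is used consistently, with Cauchy--Schwarz for the contraction.)

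\emph{Step 2: general $\bfu$ by truncation.} Assume only $\bfu^*\in L^1_{|\DIV\MA|}$, and apply Step 1 to $\bfu^M$. By Proposition \ref{p:trunc}(ii),
\[
|(\MA:D\bfu^M)|(B)\le\|\MA\|_{L^\infty(U)}|D\bfu|(B),
\]
uniformly in $M$. As $M\to\infty$:
\begin{itemize}
\item $\bfu^M\to\bfu$ in $L^1$, so the second term of \eqref{eq:pairingvettoriale} converges.
\item For the first term, use $(\bfu^M)^*\to\bfu^*$ in $L^1_{|\DIV\MA|}$. This follows from Proposition \ref{p:trunc}(i),(iii) and dominated convergence, with $|\DIV\MA|\ll\Haus{N-1}$ as in Proposition \ref{p:trunc}(iv). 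The dominating function $\frac12(\|\bfu^+\|+\|\bfu^-\|)$ must be $|\DIV\MA|$-integrable; I expect this to be the main obstacle.
\end{itemize}
On $J_\bfu\setminus\jump{\MA}$ there is no $|\DIV\MA|$ mass from the jump part, while the Cantor part vanishes on $\sigma$-finite sets; there $\bfu^+=\bfu^-=\bfu^*$ a.e. On $J_\bfu\cap\jump{\MA}$ one can instead argue componentwise via \eqref{eq:pairingsum}, appealing to the scalar result of \cite{CD3}. Alternatively, I would first try to use $\|(\bfu^*)^M\|\le\|\bfu^*\|$ componentwise. This is valid because $T_M$ is monotone and $(u_i^*)^M=T_M(\frac{u_i^++u_i^-}{2})$ is bounded by $|u_i^*|$, once one notes $(\bfu^M)^*=\frac12(T_M u_i^++T_M u_i^-)$, whose absolute value is $\le|u_i^*|$ only when the two traces have the same sign. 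The mixed-sign case must be handled via the $L^1$ assumption on $\bfu^\pm$, as in \cite{CD3}.

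Once the convergence holds, the distributions converge, and the uniform bound passes to the limit. Thus $(\MA:D\bfu)$ is a Radon measure satisfying \eqref{eq:abscont}, and in particular it is absolutely continuous with respect to $|D\bfu|$.

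\emph{Step 3: the Leibniz formula.} Finally, \eqref{f:anz1} is just a rewriting of the definition. Indeed,
\[
\pscal{\Div(\MA^T\bfu)}{\varphi}=-\int\MA^T\bfu\cdot\nabla\varphi\,dx=-\int\MA:[\bfu\otimes\nabla\varphi]\,dx
\]
by \eqref{eq:ident1}. Therefore
\[
\Div(\MA^T\bfu)=\bfu^*\cdot\DIV\MA+(\MA:D\bfu)
\]
as distributions. The right-hand side is a sum of two finite measures, since $\bfu^*\in L^1_{|\DIV\MA|}$, so $\Div(\MA^T\bfu)$ is a finite measure and \eqref{f:anz1} holds in the sense of measures.
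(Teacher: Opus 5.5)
Your Steps 1 and 3 are correct, and they take a more economical route than the paper. The paper approximates $\MA$ by smooth fields (Lemma~\ref{lem:approxim}) to get the Leibniz rule for Lipschitz $\bfu$, and proves a priori via \eqref{eq:equiboundeddiv} that $\Div(\MA^T\bfu)$ is a measure. You instead test $\DIV\MA$ directly against $\varphi\bfu_\varepsilon$. This gives \eqref{eq:abscont} straight from the $\varepsilon\to0$ limit, and the finiteness of $\Div(\MA^T\bfu)$ then follows from the identity in Step 3. No approximation of $\MA$ is needed.

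The gap is in Step 2, exactly where you flag the ``main obstacle'': you never establish $(\bfu^M)^*\to\bfu^*$ in $L^1_{|\DIV\MA|}(\Omega;\R^N)$. None of the remedies you sketch closes it.
\begin{itemize}
\item You appeal to an ``$L^1$ assumption on $\bfu^\pm$'', but that is not a hypothesis; only $\bfu^*\in L^1_{|\DIV\MA|}$ is assumed.
\item The pointer to the scalar result of \cite{CD3} is not an argument.
\item Your assertion that $|(u_i^M)^*|\le|u_i^*|$ can fail when $u_i^+$ and $u_i^-$ have opposite signs is false, and correcting it is precisely the missing step. Since $T_M$ is odd, nondecreasing and $1$-Lipschitz,
\[
|T_M(a)+T_M(b)| = |T_M(a)-T_M(-b)| \le |a-(-b)| = |a+b| \qquad \text{for all } a,b\in\R .
\]
\end{itemize}
Hence $|(u_i^M)^*|=\tfrac12|T_M(u_i^+)+T_M(u_i^-)|\le|u_i^*|$ at every point of $\Omega\setminus(S_\bfu\setminus J_\bfu)$. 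So $\|(\bfu^M)^*\|\le\|\bfu^*\|$ holds $\Haus{N-1}$-a.e., hence $|\DIV\MA|$-a.e.\ by Proposition~\ref{prop:absolutecont}. Combined with the pointwise convergence in Proposition~\ref{p:trunc}(i), dominated convergence with the integrable majorant $\|\bfu^*\|$ finishes Step 2.

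Alternatively, the majorant $\frac12(\|\bfu^+\|+\|\bfu^-\|)$ is itself $|\DIV\MA|$-integrable under your hypotheses:
\begin{itemize}
\item By the trace representation of the jump part, $|\DIV\MA|\res J_\bfu\le C\|\MA\|_\infty\,\Haus{N-1}\res J_\bfu$.
\item $\int_{J_\bfu}\|\bfu^+-\bfu^-\|\,d\Haus{N-1}=|D^j\bfu|(\Omega)<\infty$.
\item Therefore $\|\bfu^\pm\|\le\|\bfu^*\|+\tfrac12\|\bfu^+-\bfu^-\|$ is $|\DIV\MA|$-integrable.
\end{itemize}
That majorant is what the paper's Proposition~\ref{p:trunc}(iii)--(iv) relies on. The paper simply invokes Proposition~\ref{p:trunc}(iv) at this point, so your suspicion that something must be checked is justified. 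As written, however, your proposal leaves the general case $\bfu^*\in L^1_{|\DIV\MA|}$ unproved.
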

\begin{proof}
{We first assume that $\bfu\in BV(\Omega;\R^N)\cap L^\infty(\Omega;\R^N)$. In this case, }we may follow the ideas of \cite[Theorem 3.1]{ChenFrid}. Let $\{\MA_k\}$ be the sequence of Lemma~\ref{lem:approxim}, and let $\{\bfu_k\}$ be the sequence approximating $\bfu$ given in Theorem~\ref{thm:AG}. Then, arguing as in \cite{ChenFrid}, we can prove that 
\begin{equation}
\begin{split}
\int_\Omega |\Div(\MA_k^T\bfu_k)|\, dx & = \sup \left  \{\int_\Omega (\MA_k^T\bfu_k) \cdot \nabla \varphi \, dx: \,\, \varphi\in C^1_c(\Omega)\,,\,\, \|\varphi\|_\infty\leq 1 \right \} \\
& \leq 3 \left \{\|\bfu\|_\infty|\DIV \MA_k|(\Omega)+ \|\MA\|_\infty\|\nabla \bfu_k\|_{L^1(\Omega)} \right\}
\end{split}
\label{eq:equiboundeddiv}
\end{equation}
whence, passing to the limit as $k\to+\infty$, we get
\begin{equation*}
|\Div(\MA^T \bfu)|(\Omega) \leq3 \left \{\|\bfu\|_\infty|\DIV \MA|(\Omega)+ \|\MA\|_\infty|D \bfu|(\Omega) \right\}
<+\infty\,.
\label{eq:equiboundeddiv111}
\end{equation*} 
Since $\MA^T \bfu \in L^\infty(\Omega;\mathbb{R}^N)$, we conclude that $\MA^T \bfu \in \mathcal{DM}^\infty(\Omega;\mathbb{R}^N)$. 

We subdivide into steps the rest of the proof. \\
\noindent
{\bf Step 1.} First, we assume that $\bfu$ is Lipschitz continuous on compact subsets of $\Omega$. We claim that \eqref{f:anz1} holds in the form
\begin{equation}
\Div(\MA^T \bfu)= \bfu \cdot \DIV\MA + \MA: \nabla\bfu\,\Leb{N}
\label{eq:leibniz1}
\end{equation}
in the sense of measures on $\Omega$. 

From Lemma \ref{lem:approxim} we have $\DIV\MA_k \rightharpoonup \DIV\MA$ as Radon measures on $\Omega$. Then
\begin{equation*}
\bfu \cdot \DIV\MA_k +\MA_k: \nabla\bfu\,\Leb{N} \rightharpoonup \bfu \cdot \DIV\MA + \MA: \nabla\bfu\,\Leb{N} \quad \mbox{ in $\mathcal{M}(\Omega)$.}
\end{equation*}
Moreover, $\Div(\MA_k^T \bfu)\rightharpoonup\Div(\MA^T \bfu)$ in the sense of distributions. Collecting these facts we can compute the limit in the sense of distributions of the identity
\begin{equation*}
\Div(\MA_k^T \bfu)= \bfu \cdot \DIV\MA_k + \MA_k: \nabla\bfu\,\Leb{N}\,,
\end{equation*} 
so that \eqref{eq:leibniz1} holds in the sense of distributions. Since $C^1_c(\Omega)$ is dense in $C_c(\Omega)$, the same identity holds in the sense of measures. \\
\\
\noindent
{\bf Step 2.} Let $\bfu\in BV(\Omega;\R^N)\cap L^\infty(\Omega;\R^N)$, and define the sequence of mollifications $\{\bfu_\varepsilon\}$ as in \eqref{eq:mollification}. Then, from {\bf Step 1} we get 
\begin{equation*}
\Div(\MA^T\bfu_\varepsilon)= \bfu_\varepsilon \cdot \DIV\MA + \MA: \nabla\bfu_\varepsilon\,\Leb{N} \quad \mbox{ in $\mathcal{M}(\Omega)$.}    
\end{equation*}
Since $\Haus{N-1}(S_\bfu\backslash J_\bfu)=0$, from \eqref{eq:convtoprecise} we have that $\bfu_\varepsilon$ converges to $\bfu^*$ $\Haus{N-1}$-a.e. $x$ in $\Omega$ and, by virtue of Proposition \ref{prop:absolutecont}, also $|\DIV\MA|$-a.e. on $\Omega$. Then, by 
the Dominated Convergence Theorem applied to the measure $\DIV\MA$, we obtain
\begin{equation}
\bfu_\varepsilon \cdot \DIV\MA \rightharpoonup \bfu^* \cdot \DIV\MA \,\, \mbox{ as $\varepsilon\to0$  in $\mathcal{M}(\Omega)$.}
\label{eq:stima1}
\end{equation}  
Arguing as for \eqref{eq:equiboundeddiv}, we get that the sequence $\{\Div(\MA^T\bfu_\varepsilon)\}$ is equibounded in $\mathcal{M}(\Omega)$. Since $\Div(\MA^T\bfu_\varepsilon)$ converges to $\Div(\MA^T\bfu)$ on $\Omega$ in the sense of distributions, we then have
\begin{equation}
\Div(\MA^T\bfu_\varepsilon) \rightharpoonup \Div(\MA^T\bfu) \,\, \mbox{ as $\varepsilon\to0$  in $\mathcal{M}(\Omega)$.}
\label{eq:stima2}
\end{equation}
Collecting \eqref{eq:stima1} and \eqref{eq:stima2}, we obtain
 \begin{equation*}
\MA: \nabla\bfu_\varepsilon \rightharpoonup \Div(\MA^T\bfu) - \bfu^* \cdot \DIV\MA = (\MA: D\bfu) \,\, \mbox{ as $\varepsilon\to0$  in $\mathcal{M}(\Omega)$,}
\label{eq:stima3}
\end{equation*}
so that $(\MA: D\bfu)$ is a Radon measure on $\Omega$. To prove \eqref{eq:abscont}, we can follow the argument of \cite[Theorem 1.5]{Anz}. 
 \\
\noindent
{\bf Step 3.} Now, we deal with the general case $\bfu\in BV(\Omega;\R^N) \cap L^1_{|\DIV\MA|}(\Omega;\R^N)$, for which we can adapt the second part of the argument for the proof of \cite[Proposition 4.4]{CDCM}. Let $M>0$ and $\bfu^M$ be defined as in \eqref{eq:truncfunct}. Then, by Proposition \ref{prop:truncconv} we have $\bfu^M\in BV(\Omega;\R^N)\cap L^\infty(\Omega;\R^N)$, $(\bfu^M)^* \to \bfu^*$ $\Haus{N-1}$- a.e. in $\Omega$ and in $L^1_{|\DIV\MA|}(\Omega;\R^N)$. 

Passing to the limit as $M\to+\infty$ in
\begin{equation*}
\pscal{(\MA:D\bfu^M)}{\varphi} =
-\int_\Omega \varphi (\bfu^M)^*\cdot d \DIV \MA - \int_\Omega  \, \MA:[\bfu^M \otimes\nabla\varphi]\, dx\,,
\end{equation*}
we obtain $(\MA:D\bfu)$ as the weak* limit of $(\MA:D\bfu^M)$ in the sense of measures. Therefore, $(\MA: D\bfu)$ is a Radon measure on $\Omega$. Moreover, using Proposition~\ref{prop:truncconv}(ii) and \eqref{eq:abscont} for $(\MA: D\bfu^M)$, for each $B\subset U\Subset \Omega$ we get 
\begin{equation*}
|(\MA: D\bfu^M)|(B) \leq \|\MA\|_{L^\infty(U)} |D\bfu^M|(B) \leq \|\MA\|_{L^\infty(U)} |D\bfu|(\Omega) \quad \mbox{ for every $M$,}
\end{equation*}
whence \eqref{eq:abscont} follows also for $(\MA: D\bfu)$. Finally, $\Div(\bfu\MA)$ is a measure and \eqref{f:anz1} holds. 
\end{proof}

\begin{remark}\label{pipo}
If $\DIV \MA<\!\!<\Leb{N}$ (and, with a slight abuse of notation, we denote its density also by $\DIV \MA$), the pairing can be defined equivalently as
\begin{equation*}\label{eq:pairingvettoriale2}
\pscal{(\MA:D\bfu)}{\varphi} :=
-\int_\Omega \varphi \bfu\cdot  \DIV \MA \,dx -\int_\Omega  \, \MA:[\bfu \otimes\nabla\varphi]\, dx.
\end{equation*}
It is well posed under the assumption that $\DIV \MA\in L^N(\Omega;\R^N)$, since $\bfu\in BV(\Omega;\R^N)\subset L^{\frac N{N-1}}(\Omega;\R^N)$ {(see, e.g., \cite[Theorem 3.47]{AFP})}, if $N>1$.  In this case, H\"{o}lder's inequality yields $\bfu\in L^1_{|\DIV\MA|}(\Omega;\R^N)$ and
\begin{equation*}
|\Div(\MA^T\bfu)|(\Omega) \leq3 \left \{\|\bfu\|_{\frac N{N-1}}\|\DIV \MA\|_N+ \|\MA\|_\infty|D \bfu|(\Omega) \right\}
<+\infty\,,
\label{eq:equiboundeddiv1117}
\end{equation*} 
whence $\Div(\MA^T\bfu)\in \mathcal{M}(\Omega)$. 
For $N=1$ we have $\bfu\in BV(\Omega)\subset L^{\infty}_{\rm{loc}}(\Omega)$. Then $\bfu\in L^1_{|D\A|}(\Omega)$ and
\begin{equation*}
|D(\MA^T\bfu)|(\Omega) \leq3 \left \{\|\bfu\|_{\infty}\|\DIV \MA\|_1+ \|\MA\|_\infty|D \bfu|(\Omega) \right\}
<+\infty\,,
\label{eq:equiboundeddiv11176}
\end{equation*} 
so that $D(\MA^T\bfu)\in \mathcal{M}(\Omega)$. 
\end{remark}

If $\bfu \in BV(\Omega;\R^N)\cap L^\infty(\Omega;\R^N)$, as a consequence of Theorem~\ref{thm:main} we have \(\MA^T\bfu\in\DM(\Omega;\R^N)\). Then, according to definition \eqref{f:disttr}, we can consider the normal traces of $\MA^T\bfu$ on a countably \(\Haus{N-1}\)--rectifiable set $\Sigma\subset\Omega$. The following result provides a representation formula for these traces, in the same vein of \cite[Proposition 3.1]{CD3}.
\begin{proposition}\label{p:tracesb}
	Let \(\MA\in\DMA(\Omega;\mathbb{R}^{N\times N})\),  \({\bfu}\in BV(\Omega;\R^N)\cap L^\infty(\Omega;\R^N)\) and let \(\Sigma\subset\Omega\) 
	be a countably \(\Haus{N-1}\)--rectifiable set, oriented
	as in Section~\ref{distrtraces}.
	Then the normal traces of \(\MA^T\bfu\) on \(\Sigma\) are given by
	\begin{equation}\label{f:trus}
		\Trace{\MA^T\bfu}{\Sigma} = 
		\begin{cases}
			\bfu^\pm\cdot \Trpmv{\MA}{\Sigma},
			& \Haus{N-1}-\text{a.e.\ in}\ J_{\bfu}\cap\Sigma,\\
			\widetilde{\bfu}\cdot\Trpmv{\MA}{\Sigma},
			& \Haus{N-1}-\text{a.e.\ in}\ \Sigma\setminus J_{\bfu}. 	
		\end{cases}
	\end{equation}
\end{proposition}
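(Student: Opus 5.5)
The plan is to reduce the statement to the scalar case, component by component, and then reassemble. Write $\MA^T\bfu=\sum_{i=1}^N u_i\A_i$, where $\A_i$ is the $i$-th column of $\MA$; indeed $(\MA^T\bfu)_j=\sum_i A_{ij}u_i$. Since $\bfu\in BV\cap L^\infty$, each $u_i\in BV(\Omega)\cap L^\infty(\Omega)$. Each $\A_i\in\DMloc(\Omega;\R^N)$, because $\Div\A_i$ is the $i$-th component of $\DIV\MA$. By \eqref{eq:uAmeas}, every $u_i\A_i$ is then a bounded divergence-measure field.

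The normal trace functional \eqref{f:disttr} is linear in the field. For the $C^1$ domains $\Omega_i,\Omega'_i$ used to orient $\Sigma$, this gives
\[
\Trace[]{\MA^T\bfu}{\partial\Omega_i}=\sum_{k=1}^N\Trace[]{u_k\A_k}{\partial\Omega_i},
\]
and similarly on $\partial\Omega'_i$, $\Haus{N-1}$-a.e. on $E_i$. Hence $\Trace{\MA^T\bfu}{\Sigma}=\sum_k\Trace{u_k\A_k}{\Sigma}$.

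Next I invoke the scalar representation result \cite[Proposition 3.1]{CD3}. It states that $\Trace{u_k\A_k}{\Sigma}=u_k^\pm\,\Trace{\A_k}{\Sigma}$ $\Haus{N-1}$-a.e. on $J_{u_k}\cap\Sigma$, and $\widetilde{u_k}\,\Trace{\A_k}{\Sigma}$ on $\Sigma\setminus J_{u_k}$. By definition, $\Trace{\A_k}{\Sigma}$ is exactly the $k$-th component of $\Trpmv{\MA}{\Sigma}$.

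The main obstacle is the bookkeeping between the jump set of the vector function and those of its components. Up to an $\Haus{N-1}$-null set, $J_\bfu=\bigcup_k J_{u_k}$. On $J_\bfu$ the orientation $\bm\nu_\bfu$ is common to all components jumping there. At such points, a component not jumping (i.e. $x\notin S_{u_k}$) satisfies $u_k^+=u_k^-=\widetilde{u_k}$, consistently with \cite[Proposition 3.69]{AFP}.

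One also has to ensure that the labels $\pm$ of $\bfu^\pm$ are chosen according to the orientation $\bm\nu_\Sigma$ on $J_\bfu\cap\Sigma$. Since $J_\bfu$ and $\Sigma$ are both rectifiable, their approximate tangent planes coincide $\Haus{N-1}$-a.e. on the intersection, so $\bm\nu_\bfu=\pm\bm\nu_\Sigma$ there. The same relabelling, applied to every component $u_k$ at once, makes the scalar formula hold with $u_k^\pm=(\bfu^\pm)_k$ simultaneously for all $k$. Points of $\Sigma\cap(S_\bfu\setminus J_\bfu)$ are $\Haus{N-1}$-negligible and can be ignored.

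Summing over $k$ then gives
\[
\sum_k u_k^\pm\,(\Trpmv{\MA}{\Sigma})_k=\bfu^\pm\cdot\Trpmv{\MA}{\Sigma}\quad\text{on } J_\bfu\cap\Sigma,
\]
and $\widetilde{\bfu}\cdot\Trpmv{\MA}{\Sigma}$ on $\Sigma\setminus J_\bfu$. This is \eqref{f:trus}.

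If one wants to avoid citing \cite{CD3} directly, the fallback is to reprove the scalar statement. One would blow up at $\Haus{N-1}$-a.e. $x\in\Sigma$, using the localization property of traces and the $L^1$ convergence of the rescaled $u$ to its one-sided limits. This would be the technical core, but the componentwise reduction above lets us take it as known.
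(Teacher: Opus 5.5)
Your proof is correct, but it follows a different route from the paper's. The paper does not reduce to components. It redoes the blow-up of \cite[Proposition 3.1]{CD3} directly in vector form. It tests \eqref{f:disttr} for $\MA^T\bfu$ with $\eta_\varepsilon$, and \eqref{f:disttrv} for $\MA$ with the vector test function $\bfu^-(x)\eta_\varepsilon$. The difference $\Trm{\MA^T\bfu}{\Sigma}(x)-\bfu^-(x)\cdot\Trmv{\MA}{\Sigma}(x)$ is then bounded by two terms:
\begin{itemize}
\item a volume term, which vanishes by the $L^1$ convergence of the blow-ups of $\bfu$ to its one-sided limit;
\item a measure term, which vanishes because $|\DIV\MA|\res\Omega_i$ and $|\Div(\MA^T\bfu)|\res\Omega_i$ have zero $(N-1)$-dimensional density at $\Haus{N-1}$-a.e.\ point of $\partial\Omega_i$.
\end{itemize}
Since the vector one-sided limits $\bfu^\pm(x)$ enter the blow-up directly, the paper never compares $J_\bfu$ with the sets $J_{u_k}$ or aligns orientations component by component. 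The cost is redoing the localization and density analysis.

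Your approach instead uses linearity of the normal trace in the field and the decomposition $\MA^T\bfu=\sum_k u_k\A_k$, which is the trace-level analogue of \eqref{eq:pairingsum}, and then imports the scalar theorem. The extra work is the bookkeeping you carry out, and you do it correctly:
\begin{itemize}
\item $J_\bfu=\bigcup_k J_{u_k}$ up to $\Haus{N-1}$-null sets;
\item at each $x\in J_\bfu$, every $u_k$ either jumps with normal $\pm\bm\nu_\bfu(x)$ and traces $(\bfu^\pm)_k$, or is approximately continuous with value $(\bfu^+)_k=(\bfu^-)_k$;
\item a single relabelling aligns $\bm\nu_\bfu$ with $\bm\nu_\Sigma$ simultaneously for all $k$.
\end{itemize}

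This makes your argument shorter and modular, in the spirit of the paper's comment before Theorem~\ref{thm:main} that the pairing results could be deduced componentwise from \cite{CD3}. The paper's proof, by contrast, is self-contained.

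A cosmetic point: your $\A_k$, with $(\A_k)_j=A_{kj}$, is a row of $\MA$ in the usual sense. This is harmless, since it is exactly the object whose divergence and normal trace give the $k$-th components of $\DIV\MA$ and $\Trv(\MA,\cdot)$ in the paper's conventions.
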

\proof
It will suffice to show
\eqref{f:trus} for \(\Tr^-\), since the proof for \(\Tr^+\) is analogous. Recalling the notation of Section~\ref{distrtraces}, we may assume that $J_\bfu$ is oriented with $\bm\nu_\Sigma$ on $J_\bfu\cap\Sigma$. We adapt the argument of Proposition 3.1 in \cite{CD3}.

We will perform a blow-up argument around each point \(x\in \Sigma\) such that
\begin{itemize}
	\item[(a)] \(x\in\Omega\setminus (S_{\bfu}\setminus J_{\bfu})\), 
	\(x\in E_i\) for some \(i\), the set \(E_i\) has density \(1\) at \(x\), 
	and \(x\) is a Lebesgue point of both \(\Trmv{\MA}{\Sigma}\) and \(\Trm{\MA^T\bfu}{\Sigma}\)
	with respect to
	\(\Haus{N-1}\res\partial\Omega_i\);
	\\
	\item[(b)]
	\(|\DIV\MA| \res\Omega_i (B_\varepsilon(x)) = o(\varepsilon^{N-1})\)  \quad \mbox{ and }\quad \(|\Div (\MA^T\bfu)| \res\Omega_i  (B_\varepsilon(x)) = o(\varepsilon^{N-1})\)
	as $\varepsilon\to0$.
	\end{itemize}
Note that \(\Haus{N-1}\)-a.e.\ \(x\in \Sigma\) satisfies (a), since $\Haus{N-1}(S_\bfu\backslash J_\bfu)=0$, and (b), as a consequence of \cite[Theorem~2.56 and (2.41)]{AFP}. 
In order not to overburden the notation, we set
 \(\bfu^-(x) := \widetilde{\bfu}(x)\) if \(x\in \Omega\setminus S_{\bfu}\).

Let \(\eta\in C^{\infty}_c(\R^N)\) be a cut-off function, with ${\rm supp}\,\eta\subseteq B_1(0)$, such that \(0\leq \eta \leq 1\), and for every \(\varepsilon > 0\) set \(\eta_{\varepsilon}(y) := \eta\left(\frac{y-x}{\varepsilon}\right)\). We will use both the definitions of trace \eqref{f:disttr} and \eqref{f:disttrv}. In particular, for $\varepsilon$ small enough, we can choose $\eta_\varepsilon$ and $\bm\varphi=\bfu^-(x)\eta_{\varepsilon}$  as test functions in \eqref{f:disttr} and \eqref{f:disttrv}, respectively. Note that
$$
\nabla\bm\varphi(y)=\bfu^-(x)\otimes\nabla\eta_{\varepsilon}(y),
$$
hence by \eqref{eq:ident1}
$$
\MA:\nabla\bm\varphi(y)=\MA:(\bfu^-(x)\otimes\nabla\eta_{\varepsilon}(y))=\bfu^-(x)\cdot(\MA\nabla\eta_{\varepsilon}(y)) = \nabla\eta_{\varepsilon}(y) \cdot (\MA^T \bfu^-(x))\,.
$$
Then, for \(\varepsilon > 0\) small enough, we can write
\begin{equation}\label{f:tra}
\begin{split}
&\frac{1}{\varepsilon^{N-1}}
\int_{\partial\Omega_i} 
[\Tr(\MA^T \bfu, \partial\Omega_i) -
\bfu^-(x) \cdot\Trv(\MA, \partial\Omega_i)]
\, \eta_{\varepsilon}(y)\, d\Haus{N-1}(y)
\\ = {} &%%%%%
\frac{1}{\varepsilon^{N-1}}
\int_{\Omega_i} \nabla\eta_{\varepsilon}(y) \cdot \MA^T(y)\bfu(y)\, dy+\frac{1}{\varepsilon^{N-1}}
\int_{\Omega_i} \eta_{\varepsilon}(y) \, d\Div(\MA^T\bfu)
\\ & -\frac{1}{\varepsilon^{N-1}} 
\int_{\Omega_i} \bfu^-(x)\cdot(\MA(y) \nabla\eta_{\varepsilon}(y) )\, dy
-\frac{1}{\varepsilon^{N-1}} 
\int_{\Omega_i} \eta_{\varepsilon}(y)\bfu^-(x)\cdot d\DIV \MA(y)\,
\\ = {} &
\frac{1}{\varepsilon^{N-1}}
\int_{\Omega_i} \nabla\eta_{\varepsilon}(y) \cdot [\MA^T(y)\bfu(y) - \MA^T(y)\bfu^-(x)]\, dy
\\ & + 
\frac{1}{\varepsilon^{N-1}}
\int_{\Omega_i} \eta_{\varepsilon}(y) \, d[\Div(\MA^T\bfu) - \bfu^-(x)\cdot \DIV\MA](y)\, \\
=: & \,\,\,\, J_1(\varepsilon) + J_2(\varepsilon) \,.
\end{split}
\end{equation}

Using the change of variables \(z = (y-x)/\varepsilon\),
as \(\varepsilon \to 0\) the left-hand side of this equality converges to
\[
[\Trm{\MA^T\bfu}{\Sigma}(x) - \bfu^-(x)\cdot\Trmv{\MA}{\Sigma}] \int_{\Pi_x} \eta(z)\, d\Haus{N-1}(z)\,,
\]
where \(\Pi_x\) is the tangent plane to \(\Sigma_i\) at \(x\).
Clearly \(\eta\) can be chosen in such a way that
\(\int_{\Pi_x} \eta\, d\Haus{N-1} > 0\).

Therefore, in order to prove \eqref{f:trus} for \(\Tr^-\) it suffices
to show that the two integrals \(J_1(\varepsilon)\) and \(J_2(\varepsilon)\)
on the right hand side of \eqref{f:tra} converge to \(0\)
as \(\varepsilon \to 0\).

With the change of variables \( z = (y-x) / \varepsilon\) 
we have that
\[
J_1(\varepsilon) =
\int_{\Omega_i^\varepsilon} \MA^T(x+\eps z)[ \bfu(x+\varepsilon z) - \bfu^-(x)] \cdot \nabla\eta(z) 
\, dz,
\]
where
\[
\Omega_i^\varepsilon := \frac{\Omega_i - x}{\varepsilon}.
\]
As \(\varepsilon\to 0\), these sets locally converge to the half-space 
\(P_x := \{z\in\R^N:\ \pscal{z}{ \bm\nu(x)} < 0\}\),
hence
\[
\lim_{\varepsilon\to 0}
\int_{\Omega_i^\varepsilon\cap B_1} |\bfu(x+\varepsilon z) - \bfu^-(x)|\, dz  = 
\lim_{\varepsilon\to 0}
\int_{P_x \cap B_1} |\bfu(x+\varepsilon z) - \bfu^-(x)|\, dz  = 0
\]
(see \cite[Remark 3.85]{AFP}) so that
\[
|J_1(\varepsilon)| \leq
\|\MA\|_{L^\infty(B_\eps(x))}\, \|\nabla\eta\|_{\infty} 
\int_{\Omega_i^\varepsilon\cap B_1} |\bfu(x+\varepsilon z) - \bfu^-(x)|\, dz  
\to 0.
\]
As for $J_2(\varepsilon)$, using (b) we have
\begin{equation*}
\limsup_{\varepsilon\to0}|J_2(\varepsilon)| \leq \limsup_{\varepsilon\to 0} |\bfu^-(x)| \frac{|\DIV \MA| (B_\varepsilon(x))}{\varepsilon^{N-1}} + \limsup_{\varepsilon\to 0} \frac{|\Div(\MA^T\bfu)| (B_\varepsilon(x))}{\varepsilon^{N-1}} = 0\,,
\end{equation*}
so that $J_2(\varepsilon)\to0$ and the proof is complete. 
\endproof

\begin{remark}
Let $\Omega'\Subset\Omega$ be an open set with Lipschitz boundary. Then \eqref{f:trus} holds with $\Sigma=\partial\Omega'$ and $\bfu^+(x)= \bfu^{\Omega'}(x)$, $\bfu^-(x)= \bfu^{\Omega\backslash\overline{\Omega'}}(x)$, where these traces are defined for $\Haus{N-1}$-a.e. $x\in\partial\Omega'$ by \eqref{eq:boundarytracethm}. See \cite[Lemma 5.6]{Cas} for the analogous result in the scalar case. 
\end{remark}

Since $J_{\bfu}$ is countably $\Haus{N-1}$-rectifiable,
Proposition~\ref{p:tracesb} immediately yields the following result, which is the analog of \cite[Corollary 3.2]{CD3}.

\begin{corollary}\label{c:traces}
	Let \(\MA\in\DMA(\Omega;\mathbb{R}^{N\times N})\) and \(\bfu\in BV(\Omega;\R^N)\cap L^\infty(\Omega;\R^N)\). Then the normal traces of \(\MA^T\bfu\) on \(J_{\bfu}\) are given by
	\begin{equation*}\label{f:tr}
		\Trace{\MA^T\bfu}{J_{\bfu}} = \bfu^\pm \cdot \Tracev{\MA}{J_\bfu},
		\qquad \Haus{N-1}-\text{a.e.\ in}\ J_\bfu. 	
	\end{equation*}
	In particular
	\begin{equation}\label{f:truA}
		\Div(\MA^T\bfu)\res J_\bfu =
		\left[
		\bfu^+ \cdot \Trpv{\MA}{J_\bfu} - \bfu^- \cdot \Trmv{\MA}{J_\bfu}
		\right]\, \Haus{N-1} \res J_\bfu.
	\end{equation}
\end{corollary}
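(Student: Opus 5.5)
The first identity should follow directly from Proposition~\ref{p:tracesb} applied with $\Sigma = J_\bfu$. Since $J_\bfu$ is countably $\Haus{N-1}$-rectifiable, we orient it by choosing the orientation $\bm\nu_\Sigma$ of Section~\ref{distrtraces} to agree with the jump direction $\bm\nu_\bfu$ on $J_\bfu$; this is the same convention used in the proof of Proposition~\ref{p:tracesb}. Every point of $\Sigma = J_\bfu$ lies in $J_\bfu\cap\Sigma$, so only the first case of \eqref{f:trus} applies. That gives $\Trace{\MA^T\bfu}{J_\bfu} = \bfu^\pm\cdot\Tracev{\MA}{J_\bfu}$ for $\Haus{N-1}$-a.e. point of $J_\bfu$, with the $\pm$ signs matched.

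For \eqref{f:truA}, Theorem~\ref{thm:main} gives $\MA^T\bfu\in\DM(\Omega;\R^N)$, so we can use the jump formula \eqref{mmm} for divergence-measure vector fields with $\C = \MA^T\bfu$ and $\Sigma = J_\bfu$:
\[
\Div(\MA^T\bfu)\res J_\bfu = \bigl[\Trp{\MA^T\bfu}{J_\bfu} - \Trm{\MA^T\bfu}{J_\bfu}\bigr]\,\Haus{N-1}\res J_\bfu .
\]
Substituting the trace formula from the first part into this identity gives \eqref{f:truA}.

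There are two points to check. The first is that the orientation used to define $\bfu^\pm$ and the orientation used for the traces of $\MA$ and of $\MA^T\bfu$ are the same on $J_\bfu$; choosing $\bm\nu_\Sigma=\bm\nu_\bfu$ settles this, and if one swaps $\bm\nu$ to $-\bm\nu$ then both $\bfu^\pm$ and $\Tr^\pm$ swap together, so the formula does not change. The second is that \eqref{mmm} applies to $\C=\MA^T\bfu$ on an arbitrary countably rectifiable set. It does, because the vector-field theory in \cite{AmbCriMan} is stated exactly in that generality.
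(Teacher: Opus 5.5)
Your proposal is correct and matches the paper's argument: the corollary follows from Proposition~\ref{p:tracesb} with $\Sigma=J_\bfu$ (countably $\Haus{N-1}$-rectifiable, oriented compatibly with $\bm\nu_\bfu$), and \eqref{f:truA} follows by substituting these traces into the jump formula \eqref{mmm} for $\MA^T\bfu\in\DM(\Omega;\R^N)$. One correction to your side remark: reversing the orientation sends $\Tr^{\pm}$ to $-\Tr^{\mp}$, for the traces of both $\MA^T\bfu$ and $\MA$, so it is a swap together with a sign change, and it is this, combined with $\bfu^\pm\mapsto\bfu^\mp$, that leaves both formulas invariant (a pure swap would flip the sign of the right-hand side of \eqref{f:truA}).
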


We are now ready to state the main decomposition theorem
for the pairing measure. 

\begin{theorem}\label{t:pairing} 
Let \(\MA\in\DMA(\Omega;\mathbb{R}^{N\times N})\) and $\bfu \in BV(\Omega;\R^N) \cap L^1_{|\DIV\MA|}(\Omega;\R^N)$. 
Then the measure \((\MA : D\bfu)\) admits the following decomposition:
\begin{itemize}
	\item[(i)]
	absolutely continuous part: 
	\((\MA : D\bfu)^a = \MA: \nabla \bfu\, \Leb{N}\);
	
	\item[(ii)]
	jump part:
	\(\displaystyle
	(\MA : D\bfu)^j = \Trstarv{\MA}{J_\bfu}
		\cdot (\bfu^+-\bfu^-) \, \Haus{N-1} \res J_\bfu
	\).

\item[(iii)]
	Cantor part: \((\MA:D\bfu)^c\res(\Omega\setminus S_{\MA}) = 
	\widetilde{\MA} : D^c \bfu \res(\Omega\setminus S_{\MA})\), where \(S_{\MA}\) is the approximate discontinuity set  of \(\MA\).
\end{itemize}
\end{theorem}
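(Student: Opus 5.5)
The most economical route uses the componentwise identity \eqref{eq:pairingsum}, $(\MA:D\bfu)=\sum_{i=1}^N(\A_i,Du_i)$, where $\A_i$ is the $i$-th row-vector field $(A_{i1},\dots,A_{iN})$ entering $\Div\A_i$. I will apply Theorem~\ref{t:pairingCD3} to each scalar pairing and sum. Each component $u_i$ of $\bfu$ lies in $BV(\Omega)$. By \eqref{eq:totvariationineq}, $|\Div\A_i|\le\sum_j|\Div\A_j|$, but I need the bound in the other direction, namely $|\Div\A_i|\le|\DIV\MA|$. This holds because $\Div\A_i$ is the $i$-th component of the vector measure $\DIV\MA$, so $|\Div\A_i|(B)\le|\DIV\MA|(B)$. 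Consequently $u_i^*\in L^1_{|\Div\A_i|}$ and $\A_i\in\DMloc(\Omega;\R^N)$, and Theorem~\ref{t:pairingCD3} applies to each pair $(\A_i,u_i)$. The decompositions into absolutely continuous, jump and Cantor parts are linear, so each part of $(\MA:D\bfu)$ is the sum of the corresponding parts of the $(\A_i,Du_i)$.

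\emph{Absolutely continuous part.} This is immediate: $\sum_i\A_i\cdot\nabla u_i=\sum_{i,j}A_{ij}\partial_ju_i=\MA:\nabla\bfu$.

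\emph{Cantor part.} On $\Omega\setminus S_\MA$ every row $\A_i$ is approximately continuous, with $\widetilde{\A_i}$ equal to the $i$-th row of $\widetilde\MA$, since $S_{\A_i}\subset S_\MA$. Theorem~\ref{t:pairingCD3}(iii), restricted further to $\Omega\setminus S_\MA$, then gives $\sum_i\widetilde{\A_i}\cdot D^cu_i=\widetilde\MA:D^c\bfu$. One point needs care: $D^cu_i$ is the Cantor part of the scalar function $u_i$, and I must check that $(D^c\bfu)_i=D^cu_i$. This holds because $J_{u_i}\subset J_\bfu$ up to an $\Haus{N-1}$-null set and $D^s u_i$ vanishes on $S_\bfu\setminus S_{u_i}$ ($\Haus{N-1}$-a.e. continuity points of $u_i$ carry no jump part). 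The restriction of $D^su_i$ to $\Omega\setminus S_\bfu$ therefore coincides with the $i$-th component of $D^c\bfu$, and on $S_\bfu\setminus J_\bfu$ everything vanishes.

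\emph{Jump part.} This is the main obstacle. The notation $\Trstarv{\MA}{J_\bfu}$ is not yet defined in the text; I will interpret it as $\tfrac12(\Trpv{\MA}{J_\bfu}+\Trmv{\MA}{J_\bfu})$, consistently with the scalar formula. Summing Theorem~\ref{t:pairingCD3}(ii) naively involves the sets $J_{u_i}$ and traces oriented with $\bm\nu_{u_i}$, so two points must be checked.
\begin{enumerate}
\item[(a)] The traces should be computed on $J_\bfu$ with the orientation $\bm\nu_\bfu$. On $J_{u_i}\subset J_\bfu$ one has $\bm\nu_{u_i}=\pm\bm\nu_\bfu$. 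A flip of orientation swaps $\Tr^+$ and $\Tr^-$ (with sign changes) and also swaps $u_i^+$ and $u_i^-$, so the product $\tfrac12(\Tr^++\Tr^-)(u_i^+-u_i^-)$ is orientation invariant. By the locality of traces on rectifiable sets (Section~\ref{distrtraces}), the trace on $J_{u_i}$ agrees $\Haus{N-1}$-a.e. with that on $J_\bfu$.
\item[(b)] On $J_\bfu\setminus J_{u_i}$ one has $u_i^+=u_i^-$, so these contributions vanish and the sum may be taken over all of $J_\bfu$.
\end{enumerate}
Summing over $i$ then yields $\tfrac12(\Trpv{\MA}{J_\bfu}+\Trmv{\MA}{J_\bfu})\cdot(\bfu^+-\bfu^-)\,\Haus{N-1}\res J_\bfu$.

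As an independent check in the bounded case, I will use Theorem~\ref{thm:main}: restricting \eqref{f:anz1} to $J_\bfu$ and inserting \eqref{f:truA} together with $\DIV\MA\res J_\bfu=(\Trpv{\MA}{J_\bfu}-\Trmv{\MA}{J_\bfu})\Haus{N-1}\res J_\bfu$ and $\bfu^*=\tfrac12(\bfu^++\bfu^-)$ gives the same formula by algebra. The general case then follows by truncation with Proposition~\ref{prop:truncconv}.
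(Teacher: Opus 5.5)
Your proposal is correct. For (i) and (iii) it is the paper's proof: reduce via \eqref{eq:pairingsum} to the scalar decomposition of Theorem~\ref{t:pairingCD3} and sum over rows. Your extra check that the $i$-th row of $D^c\bfu$ is $D^cu_i$ (equivalently $|Du_i|(S_\bfu\setminus S_{u_i})=0$) is a point the paper uses without comment.

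For (ii) your primary route is different. You sum the scalar jump formulas. This costs the bookkeeping you list: $J_{u_i}\subset J_\bfu$ up to $\Haus{N-1}$-null sets, $\bm\nu_{u_i}=\pm\bm\nu_\bfu$, orientation invariance of $\tfrac12(\Tr^++\Tr^-)(u_i^+-u_i^-)$, locality of traces, and vanishing on $J_\bfu\setminus J_{u_i}$. In exchange it inherits the full generality of the scalar theorem, so unbounded $\bfu$ with $\bfu^*\in L^1_{|\DIV\MA|}$ needs no further work.

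The paper's proof of (ii) is exactly your ``independent check'': restrict \eqref{f:anz1} to $J_\bfu$ and insert \eqref{f:truA}. That route stays intrinsic to the tensor setting and reuses Proposition~\ref{p:tracesb}, with no componentwise jump sets or orientation matching. However, Corollary~\ref{c:traces} is stated only for $\bfu\in L^\infty$, so as written this route covers the bounded case and tacitly needs a truncation for the general one.

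If you carry that truncation out, note that the weak* convergence $(\MA:D\bfu^M)\rightharpoonup(\MA:D\bfu)$ from Step~3 of Theorem~\ref{thm:main} does not commute with restriction to $J_\bfu$. Use instead the total-variation bound $|(\MA:D\bfu)-(\MA:D\bfu^M)|\le\|\MA\|_{L^\infty}|D(\bfu-\bfu^M)|$, which follows from linearity of the pairing and \eqref{eq:abscont}. Combine it with $|D(\bfu-\bfu^M)|(\Omega)\to0$ (by coarea) and dominated convergence on the right-hand side.
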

\proof
The proof of (i) is immediate from \cite[Theorem 4.12]{CD3}, in view of \eqref{eq:pairingsum}, since $(\MA_i,Du_i)^a = \MA_i \cdot \nabla u_i\, \Leb{N}  $ for every $i=1,\dots, N$. Assertion (ii) follows by combining \eqref{f:anz1} and \eqref{f:truA}, once we note that $(\MA : D\bfu)^j$ is concentrated on $J_\bfu$. Regarding (iii), since $S_{\MA} = \bigcup_{i=1}^N S_{\MA_i}$, we have $\Omega \backslash S_{\MA} \subseteq \Omega\backslash S_{\MA_i}$ for every $i=1,\dots,N$. Therefore, by \cite[Theorem 4.12]{CD3},
\begin{equation*}
(\MA_i,Du_i)^c\res(\Omega\setminus S_{\MA}) = 
	\widetilde{\MA}_i \cdot D^c u_i \res(\Omega\setminus S_{\MA}) \quad \mbox{ for every $i=1,\dots,N$}
\end{equation*}
and the conclusion follows by \eqref{eq:pairingsum}. 
\endproof
\begin{remark}[\(BV\) matrix-valued fields]\label{mmmm}
Let \(\bfu\in BV(\Omega;\R^N)\) and \(\MA \in BV_{\rm loc}(\Omega;\mathbb{R}^{N\times N}) \cap L^\infty_{{\rm loc}}(\Omega;\mathbb{R}^{N\times N})\). 
Then clearly \(\MA \in \DMA(\Omega;\mathbb{R}^{N\times N})\) and
\[
\Tracev{\MA}{J_\bfu} = \MA^\pm_{J_{\bfu}}\,\bm\nu_{\bfu}\,, 
\qquad
\text{\(\Haus{N-1}\)-a.e.\ in}\ J_{\bfu},
\]
where \(\MA^\pm_{J_{\bfu}}\) are the traces of \(\MA\) on \(J_{\bfu}\)
(see \cite[Theorem~3.77]{AFP}).
Hence, the jump part of \((\MA:D{\bfu})\) can be written as
\[
(\MA:D{\bfu})^j = \frac{\MA^+ + \MA^-}{2}\, :D^j {\bfu}=\MA^*:D^j{\bfu}.
\]
In particular, when $\MA\in C^1(\Omega;\mathbb{R}^{N\times N})$ and \(\bfu\in BV(\Omega;\R^N)\) we have
\begin{equation*}\label{bbbb1}
(\MA:D\bfu)=\MA:D{\bfu},
\end{equation*}
in the sense of measures.
\end{remark}

\begin{remark}\label{r:ipoCantor}
The Cantor part of the pairing measure is completely represented as in Theorem~\ref{t:pairing} (iii) only if 
\begin{equation}\label{f:ipo}
	|D^c \bfu| (S_{\MA}) = 0\,.
	\end{equation}
This requirement may represent a significant restriction to the applicability of Theorem~\ref{t:pairing}.
Indeed, $|D^c \bfu|(S_{\MA})$ can be arbitrarily large: by adapting \cite[Example 3.9]{CCDM}, one can construct 
a field $\MA\in \DMA(\Omega;\mathbb{R}^{N\times N})$ whose set $S_{\MA}$ has Hausdorff dimension equal to $N$. 

On the other hand, we identify some relevant cases in which \eqref{f:ipo} is satisfied. First, we note that it is equivalent to \(|D^d \bfu|(S_{\MA}) = 0\), since \(\Leb{N}(S_{\MA}) = 0\).
This implies that it is satisfied, for instance, when \(S_{\MA}\) is 
$\sigma$--finite with respect to $\Haus{N-1}$
(see \cite[Proposition~3.92(c)]{AFP}), hence by any \(\MA\in BV_{{\rm loc}}(\Omega ; \mathbb{M}^{N\times  N})\cap L^\infty_{{\rm loc}}(\Omega ; \mathbb{M}^{N\times  N})\). Looking at the assumptions on $\bfu$, \eqref{f:ipo} holds when $D^c \bfu = {\bf 0}$,  
i.e.\ if 
\(\bfu \in SBV(\Omega;\R^N)\). Moreover, since the set $\jump{\MA}$ defined in \eqref{f:jump} is
$\sigma$-finite with respect to $\Haus{N-1}$, we remark that assumption \eqref{f:ipo} is equivalent to
$|D^c \bfu| (S_{\MA}\setminus \jump{\MA}) = 0$.
\end{remark}

\subsection{Proof of Gauss--Green formulas}
We are going to prove Theorem \ref{main}. The proof follows an argument similar to that of \cite[Theorem 5.1]{CD3}.

\begin{proof}[Proof of Theorem \ref{main}]
Since \(E\) is bounded, it is not restrictive to suppose that
\(\MA\in\DMA(\R^N;\mathbb{R}^{N\times N})\) and \(\bfu\in BV(\R^N;\R^{N})\) with $\bfu^*\in L^1_{|\DIV\MA|}(\R^N;\R^N)$.
We divide the proof into two steps.

\noindent
{\bf Step 1}. 
We first assume that $ \bfu\in BV(\R^N;\R^{N}) \cap L^\infty(\R^N;\R^N)$.
Recall that $\chi_E\in BV(\R^N)$, the reduced boundary $\partial ^*E$ is a $\Haus{N-1}$-rectifiable set and that
\begin{equation}\label{chichi}
\chi_E^*=\chi_{E^1}+\frac12\chi_{\partial ^*E}.
\end{equation}
Let us consider the vector field ${\B}:=\MA^T\bfu$. Then \(\chi_E {\B}\) has compact support,
hence
$
\Div(\chi_E {\B}) (\R^N) = 0
$
(see \cite[Lemma 3.1]{ComiPayne}). Moreover, since by \eqref{eq:uAmeas} 
the vector field ${\B}$ belongs to \(\DM(\R^N;\R^{N})\), by using the pairing for scalar $BV$ functions recalled in Section \ref{pimo}, and choosing in \eqref{f:anz} $\chi_E$ and ${\B}$ in place of $u$ and $\A$, respectively, we get
\begin{equation}\label{GreenIIIA}
\int_{\R^N} \chi_E^* \, d\Div {\B} =
- ( {\B} , D\chi_E) (\R^N).
\end{equation}

By  the definition of normal traces, formula \eqref{mmm} and Proposition \ref{p:tracesb} 
it holds
\begin{equation}\label{bubu}
\Div {\B}\res \partial^* E =
( \bfu^+ \cdot \Trpv{\MA}{\partial^*E} -  \bfu^- \cdot\Trmv{\MA}{\partial^*E}) \Haus{N-1} \res \partial^* E\,.
\end{equation}
With this, \eqref{chichi} and \eqref{bubu}, the left hand side of \eqref{GreenIIIA} can be rewritten as
\begin{equation}\label{f:gl}
\int_{\R^N} \chi_E^* \, d\Div {\B}  =
\int_{E^1} \, d\Div {\B} + \frac12\int_{\partial ^*E} \, [ \bfu^+\cdot\Trpv{\MA}{\partial^*E}- \bfu^-\cdot\Trmv{\MA}{\partial^*E}] \ d \Haus{N-1}\,.
\end{equation}

On the other hand, for what concerns the right hand side of \eqref{GreenIIIA}, by \cite[Remark 4.4]{CD3} and \eqref{eq:varchi}
we obtain
\[
( {\B},D\chi_E)= \frac{\Trp{{\B}}{\partial^*E} + \Trm{{\B}}{\partial^*E}}{2}
\, \Haus{N-1}\res \partial^* E.
\]
Recalling that $ \bfu^\pm\in L^1(\partial^* E, \Haus{N-1}\res\partial^*E)$ (see \cite[Theorem~3.84]{AFP}) and applying Proposition \ref{p:tracesb}, 
it follows that
\begin{equation}\label{f:gr}
( \B, D\chi_E) (\R^N) = 
\int_{\partial ^*E}\frac12 [ \bfu^+\cdot\Trpv{\MA}{\partial^*E}+ \bfu^-\cdot\Trmv{\MA}{\partial^*E}]\ d\Haus{N-1}\,.
\end{equation}
Finally, plugging \eqref{f:gl} and \eqref{f:gr} into \eqref{GreenIIIA} and simplifying some terms,
and recalling the definition of $\B$, we obtain
\begin{equation}\label{nono1}
\int_{E^1} \, d\Div(\MA^T\bfu) =-\int_{\partial ^*E}\bfu^+\cdot\Trpv{\MA}{\partial^*E}\ d\Haus{N-1}.
\end{equation}
Using now formula \eqref{f:anz1} and the new pairing $(\MA:D\bfu)$, the left hand side of \eqref{nono1} can be rewritten as
$$
\int_{E^1} \, d(\bfu^* \cdot \DIV\MA + (\MA: D\bfu)) =-\int_{\partial ^*E}\bfu^+\cdot\Trpv{\MA}{\partial^*E}\ d\Haus{N-1}.
$$
This concludes the proof of 
 \eqref{eq:gaussgreen1}.
In order to prove \eqref{eq:gaussgreen2} it suffices to note that, by \eqref{nono1}
\[
\begin{split}
&\int_{E^1\cup \partial ^*E} \, d\Div(\MA^T\bfu)=\int_{E^1} \, d\Div(\MA^T\bfu) +\int_{\partial ^*E} \, [ \bfu^+\cdot\Trpv{\MA}{\partial^*E}- \bfu^-\cdot\Trmv{\MA}{\partial^*E}] \ d\Haus{N-1}
\\
=&-\int_{\partial ^*E} \bfu^+\cdot\Trpv{\MA}{\partial^*E}\ d\Haus{N-1}+\int_{\partial ^*E} \, [ \bfu^+\cdot\Trpv{\MA}{\partial^*E}- \bfu^-\cdot\Trmv{\MA}{\partial^*E}] \ d\Haus{N-1}\,,
\end{split}
\]
hence \eqref{eq:gaussgreen2} follows.
This concludes the proof of  Step 1.\\
\\
\noindent
{\bf Step 2:} We now assume $\bfu\in BV_{\rm loc}(\R^N;\R^N)$ such that $\bfu^*\in L^1_{|\DIV\MA|,{\rm loc}}(\R^N;\R^N)$. We prove \eqref{eq:gaussgreen1} by means of a truncation argument, which can be compared with \cite[Theorem 5.1]{CD3}. 

For every $M>0$, we define the truncated function $\bfu^M\in BV_{\rm loc}(\R^N;\R^N)\cap L^\infty(\R^N;\R^N)$ as in \eqref{eq:truncfunct} so that, in view of the Step 1, we can write
\begin{equation*}
 \displaystyle \int_{E^1} (\bfu^M)^* \cdot d\DIV\MA + \int_{E^1}(\MA:D\bfu^M)  = - \int_{\partial ^* E} (\bfu^M)^{+}\cdot \Trpv{\MA}{\partial ^* E} d\Haus{N-1}\,.
\end{equation*}
By Proposition \ref{prop:truncconv}(iv), we have
\begin{equation*}
\lim_{M\to+\infty} \int_{E^1} (\bfu^M)^* \cdot d\DIV\MA = \int_{E^1}\bfu^* \cdot d\DIV\MA\,,
\end{equation*}
while arguing as in the Step 3 of the proof of Theorem \ref{thm:main}, we get
\begin{equation*}
\lim_{M\to+\infty} \int_{E^1}(\MA:D\bfu^M) = \int_{E^1}(\MA:D\bfu) \,.
\end{equation*}

To conclude, by Proposition \ref{prop:truncconv}(i) and (iii) we have $(\bfu^M)^+ \to \bfu^+$ in $L^1(\partial^* E; \Haus{N-1})$, so that we obtain
\begin{equation*}
\lim_{M\to+\infty} \int_{\partial^* E} (\bfu^M)^{+}\cdot \Trpv{\MA}{\partial^* E} d\Haus{N-1} =  \int_{\partial^* E} \bfu^{+}\cdot \Trpv{\MA}{\partial^* E} d\Haus{N-1}\,,
\end{equation*}
which combined with the previous estimates gives \eqref{eq:gaussgreen1}. 
\end{proof}

\section{Gauss--Green formula for weakly regular sets}

In this section we derive another version of
the Gauss--Green formula for a particular class of bounded open sets $E\subset\R^N$ with finite perimeter, namely the so-called {\it {weakly regular}} sets (see \cite[Definition 2.5]{LeoSa}), for which
\begin{equation}
\Haus{N-1}(\partial E) = \Haus{N-1}(\partial^*E)\,.
\label{eq:weaklyreg}
\end{equation}
In order to do that, we will need some technical results which may also be of independent interest. More precisely, we establish the following
extension theorems and gluing constructions involving vector-valued functions and tensor-valued fields (analogous results for vector fields have been obtained, for instance, in \cite{ChToZi, ComiPayne, CD4}).

We first consider extensions on overlapping domains. Here, the overlap is an open set containing the boundary $\partial E$ of a bounded set of finite perimeter in $\Omega$, and the gluing takes place along $\partial E$ by restricting the respective vector fields to $E$ and its complement.

\begin{theorem}
\label{extension}
Let $\Omega\subset\R^N$ be an open set, $E$ be a set of finite perimeter in $\Omega$ and $U,W\subset \R^N$ open sets such that
	$W \Subset {\rm int}(E) \subset E \Subset U \subset\Omega$.
	Let 
\(\MA_1\in\DMAloc(U;\mathbb{R}^{N\times N})\) and \(\MA_2\in\DMAloc(\Omega\setminus\overline{W};\mathbb{R}^{N\times N})\).
	Given $\bfu_1\in BV_{\rm{loc}}(U;\R^N)\cap L^\infty_{\rm{loc}}(U;\R^N)$ and $\bfu_2\in BV_{\rm{loc}}(\Omega\setminus\overline{W};\R^N)\cap L^\infty_{\rm{loc}}(\Omega\setminus\overline{W};\R^N)$,
	let ${\bf v}_i(x) := \MA_i^T(x)\bfu_i(x)$, $i = 1,2$. 
	Then  ${\bf v}_1\in \DMloc(U;\R^N)$, ${\bf v}_2\in \DMloc(\Omega\setminus\overline{W};\R^N)$, the extension
	\[
	{\bf v}(x) :=
	\begin{cases}
	{\bf v}_1(x),
	& \text{if}\ x\in E,\\
	{\bf v}_2(x),
	& \text{if}\ x\in\Omega\setminus E,
	\end{cases}
	\]
	belongs to $\DMloc(\Omega;\R^N)$ and
	\[
	\Div{\bf v}= \chi_{E^1} \Div{\bf v}_1 + \chi_{E^0} \Div{\bf v}_2 +
	[\Trp{{\bf v}_1}{\partial^* E} - \Trm{{\bf v}_2}{\partial^* E}]\,
	\Haus{N-1}\res\partial^*E.  
	\]
\end{theorem}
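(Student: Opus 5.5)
First, I show that each ${\bf v}_i$ lies in the class $\DMloc$. This is a direct application of Theorem~\ref{thm:main}, applied locally. For every open set $V\Subset U$, the restrictions satisfy $\MA_1\in\DMA(V;\mathbb{R}^{N\times N})$ and $\bfu_1\in BV(V;\R^N)\cap L^\infty(V;\R^N)$. Because $\bfu_1$ is bounded on $V$, we automatically have $\bfu_1^*\in L^1_{|\DIV\MA_1|}(V;\R^N)$. Theorem~\ref{thm:main} then gives $\Div(\MA_1^T\bfu_1)\in\mathcal M(V)$, and ${\bf v}_1\in L^\infty(V;\R^N)$ is immediate. The same argument applied on $\Omega\setminus\overline W$ yields ${\bf v}_2\in\DMloc(\Omega\setminus\overline W;\R^N)$.

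Having reduced the problem to vector fields, I will prove the gluing statement for general ${\bf v}_1\in\DMloc(U;\R^N)$ and ${\bf v}_2\in\DMloc(\Omega\setminus\overline W;\R^N)$. I split ${\bf v}=\chi_E{\bf v}_1+(1-\chi_E){\bf v}_2$. The first term is supported in $E\Subset U$, so I treat it inside $U$. The second term vanishes on a neighbourhood of $\overline W$, because $W\Subset{\rm int}(E)$, so I treat it inside $\Omega\setminus\overline W$.

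On $U$, I apply the scalar Leibniz rule \eqref{f:anz} with $u=\chi_E$, which is in $BV_{\rm loc}(U)\cap L^\infty$ since $E\Subset U$ has finite perimeter. This gives
\[
\Div(\chi_E{\bf v}_1)=\chi_E^*\Div{\bf v}_1+({\bf v}_1,D\chi_E).
\]
By \eqref{chichi}, \eqref{mmm}, \cite[Remark 4.4]{CD3} and \eqref{eq:varchi}, exactly as in the proof of Theorem~\ref{main}, the right-hand side equals
\[
\chi_{E^1}\Div{\bf v}_1+\tfrac12[\Trp{{\bf v}_1}{\partial^*E}-\Trm{{\bf v}_1}{\partial^*E}]\Haus{N-1}\res\partial^*E+\tfrac12[\Trp{{\bf v}_1}{\partial^*E}+\Trm{{\bf v}_1}{\partial^*E}]\Haus{N-1}\res\partial^*E .
\]
This simplifies to $\chi_{E^1}\Div{\bf v}_1+\Trp{{\bf v}_1}{\partial^*E}\,\Haus{N-1}\res\partial^*E$. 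Since this measure has compact support in $U$, it extends by zero to a finite measure on $\Omega$, and it coincides with $\Div(\chi_E{\bf v}_1)$ there. To justify the extension, I test against $\varphi\in C^\infty_c(\Omega)$ after multiplying by a cutoff equal to $1$ near $\overline E$.

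On $\Omega\setminus\overline W$, I apply the same argument with $u=1-\chi_E=\chi_{\Omega\setminus E}$, for which $\chi_{\Omega\setminus E}^*=\chi_{E^0}+\tfrac12\chi_{\partial^*E}$. Near $\partial E$ the interior normal of the complement is $-\nuint_E$, so the roles of $\Tr^\pm$ are exchanged. The resulting measure is
\[
\chi_{E^0}\Div{\bf v}_2-\Trm{{\bf v}_2}{\partial^*E}\,\Haus{N-1}\res\partial^*E .
\]
This measure vanishes near $\overline W$, so again it extends by zero to $\Omega$. Summing the two pieces gives both ${\bf v}\in\DMloc(\Omega;\R^N)$ and the claimed formula. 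Here I use that $\Haus{N-1}$-a.e.\ on $\partial^*E$ the traces are the ones defined on the common oriented rectifiable set, by the localization property of the normal traces.

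I expect the main obstacle to be the sign and orientation bookkeeping in the complementary piece. Concretely, I must check that $({\bf v}_2,D\chi_{\Omega\setminus E})=-({\bf v}_2,D\chi_E)$ combines with the half-sum/half-difference terms to leave exactly $-\Trm{{\bf v}_2}{\partial^*E}$. A second point to handle is that \eqref{f:anz} is stated for $u\in BV_{\rm loc}$ on the domain where ${\bf v}_i$ lives, and $\chi_{\Omega\setminus E}$ is only locally $BV$ in $\Omega\setminus\overline W$. I will handle this by working on compact subsets and using locality of the measures.
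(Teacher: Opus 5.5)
Your proof is correct. It differs from the paper's mainly in being self-contained.

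\textbf{First step.} Your first step matches the paper's. The paper gets ${\bf v}_i\in\DMloc$ from \eqref{eq:uAmeas}. You apply Theorem~\ref{thm:main} on each $V\Subset U$ (resp.\ $V\Subset\Omega\setminus\overline W$), which is the more directly applicable statement for $\MA^T\bfu$.

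\textbf{Gluing step.} Here the paper gives no argument. It simply applies \cite[Theorem 5.1]{ComiPayne}, a gluing theorem for arbitrary locally bounded divergence-measure fields, with $F_i={\bf v}_i$. You instead reprove that gluing formula in the situation at hand. You apply the scalar Leibniz rule \eqref{f:anz} with $u=\chi_E$ on $U$ and with $u=1-\chi_E$ on $\Omega\setminus\overline W$. You combine this with \eqref{chichi}, \eqref{mmm} and the representation $({\bf v}_i,D\chi_E)=\tfrac12\bigl[\Trp{{\bf v}_i}{\partial^*E}+\Trm{{\bf v}_i}{\partial^*E}\bigr]\Haus{N-1}\res\partial^*E$. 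This is the Step~1 computation from the proof of Theorem~\ref{main}, localized, followed by extension by zero via cut-offs.

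\textbf{Checks.} Your bookkeeping is right.
\begin{itemize}
\item The half-difference from $\chi_E^*\Div{\bf v}_i$ plus or minus the half-sum from the pairing leaves $\Trp{{\bf v}_1}{\partial^*E}$ and $-\Trm{{\bf v}_2}{\partial^*E}$, respectively.
\item For the second piece, linearity gives $({\bf v}_2,D(1-\chi_E))=-({\bf v}_2,D\chi_E)$ directly from \eqref{f:pairing}.
\item The identity $(1-\chi_E)^*=\chi_{E^0}+\tfrac12\chi_{\partial^*E}$ holds only $\Haus{N-1}$-a.e. by Federer's theorem. This suffices because $|\Div{\bf v}_2|\ll\Haus{N-1}$.
\item The hypotheses $E\Subset U$ and $W\Subset{\rm int}(E)$ are used exactly where you use them. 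They give $\partial^*E\subset\partial E\subset U\setminus\overline W$, so both traces live on a common oriented set. They also make the two pieces vanish near $\Omega\setminus U$ and near $\overline W$ respectively, so they extend by zero to locally finite measures on $\Omega$.
\end{itemize}

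\textbf{Comparison.} The citation buys brevity and the general statement for any pair of such fields. Your route stays within the paper's own toolkit. It also makes the orientation convention explicit: with respect to $\nuint_E$, the $+$ trace is taken from inside $E$ and the $-$ trace from outside. Theorem~\ref{t:GGw} later relies on exactly this convention.
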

 \begin{proof}
In view of \eqref{eq:uAmeas}, it is sufficient to use \cite[Theorem 5.1]{ComiPayne} with $F={\bf v}$ and $F_i={\bf v}_i$, $i=1,2$. 
\end{proof}

We now address extensions on complementary domains. Here, the two vector fields are extended by zero on their complements and summed to obtain the gluing
along $\partial U$. 
\begin{theorem}
\label{t:gluing}
Let $U\Subset\Omega\subset\R^N$ be open sets with $\Haus{N-1}(\partial U) < \infty$.
Let 
\(\MA_1\in\DMAloc(U;\mathbb{R}^{N\times N})\) and \(\MA_2\in\DMAloc(\Omega\setminus\overline{U};\mathbb{R}^{N\times N})\). 
	Given $\bfu_1\in BV_{\rm{loc}}(U;\R^N)\cap L^\infty_{\rm{loc}}(U;\R^N)$ and 
$\bfu_2\in BV_{\rm{loc}}(\Omega\setminus\overline{U};\R^N)\cap L^\infty_{\rm{loc}}(\Omega\setminus\overline{U};\R^N)$,
define
\[
{\bf v}_1(x) := \begin{cases}
\MA_1^T(x)\bfu_1(x),
&\text{if}\ x\in U,\\
0,
&\text{if}\ x\in\Omega\setminus{U},
\end{cases}
\quad
{\bf v}_2(x) := \begin{cases}
0,
&\text{if}\ x\in \overline{U}\,,\\
\MA_2^T(x)\bfu_2(x),
&\text{if}\ x\in\Omega\setminus\overline{U}\,,
\end{cases}
\]
and set
$$
{\bf v}(x):={\bf v}_1(x)+{\bf v}_2(x).
$$
Then ${\bf v}_1\in \DMloc(U;\R^N)$, ${\bf v}_2\in \DMloc (\Omega\setminus\overline{U};\R^N)$,
${\bf v}\in \DMloc(\Omega;\R^N)$ and the divergence measure of the extension can be represented as
\[
\Div{\bf v}= \chi_{U^1} \Div{\bf v}_1 + \chi_{U^0} \Div{\bf v}_2 +
[\Trp{{\bf v}_1}{\partial^* U} - \Trm{{\bf v}_2}{\partial^* U}]\,
\Haus{N-1}\res\partial^*U.  
\]
\end{theorem}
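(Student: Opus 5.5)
The plan mirrors the proof of Theorem~\ref{extension}: reduce to the vector-field gluing result for divergence-measure fields on complementary domains, after checking that each piece is a bounded divergence-measure vector field.

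\emph{Step 1: membership of the pieces.} Fix an open $V\Subset U$. Then $\MA_1\in\DMA(V;\mathbb{R}^{N\times N})$ and $\bfu_1\in BV(V;\R^N)\cap L^\infty(V;\R^N)$. Theorem~\ref{thm:main} (the bounded case, Step~2 of its proof, estimate \eqref{eq:equiboundeddiv}) gives
\[
|\Div(\MA_1^T\bfu_1)|(V)\le 3\bigl\{\|\bfu_1\|_{L^\infty(V)}|\DIV\MA_1|(V)+\|\MA_1\|_{L^\infty(V)}|D\bfu_1|(V)\bigr\}<+\infty .
\]
Since $\MA_1^T\bfu_1\in L^\infty(V;\R^N)$, this yields ${\bf v}_1\in\DMloc(U;\R^N)$, in agreement with \eqref{eq:uAmeas}. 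The same argument on open sets $V\Subset\Omega\setminus\overline U$ gives ${\bf v}_2\in\DMloc(\Omega\setminus\overline U;\R^N)$.

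\emph{Step 2: gluing.} Here I would invoke the scalar gluing theorem for $\mathcal{DM}^\infty$ vector fields on complementary domains with $\Haus{N-1}(\partial U)<\infty$ (the analogue in \cite{ComiPayne} of \cite[Theorem 5.1]{ComiPayne}, or \cite{CD4}), applied with $F_1={\bf v}_1$ and $F_2={\bf v}_2$. It gives ${\bf v}\in\DMloc(\Omega;\R^N)$ and the stated formula for $\Div{\bf v}$.

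\emph{Step 3: fallback if that result cannot be quoted directly.} The argument has three parts.
\begin{enumerate}
\item Note that $\Haus{N-1}(\partial U)<\infty$ implies that $U$ has finite perimeter, by Federer's criterion, since $\partial^eU\subset\partial U$.
\item Show that the zero extension of ${\bf v}_1$ has a locally finite divergence. Take cutoffs $\psi_\delta$ vanishing outside $U$, equal to $1$ on $\{d(\cdot,\partial U)>\delta\}$, with $|\nabla\psi_\delta|\lesssim1/\delta$. Then
\[
\int_U \psi_\delta\,{\bf v}_1\cdot\nabla\varphi\,dx
\]
splits into $\int\varphi\psi_\delta\,d\Div{\bf v}_1$ plus an error of order $\delta^{-1}\Leb{N}(\{d(\cdot,\partial U)<\delta\})\|{\bf v}_1\|_\infty$. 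The lower Minkowski-type bound $\liminf_\delta\delta^{-1}\Leb{N}(\{d<\delta\})\lesssim\Haus{N-1}(\partial U)$, which may only hold along a subsequence, controls this error uniformly. The same argument applies to ${\bf v}_2$ on $\Omega\setminus\overline U$.
\item Identify the boundary part of $\Div{\bf v}$. It is concentrated on $\partial U$, which has $\sigma$-finite $\Haus{N-1}$ measure. By the representation \eqref{mmm} on $\partial^*U$, together with the characterization of traces by interior and exterior blow-ups as in Proposition~\ref{p:tracesb}, this part equals $[\Trp{{\bf v}_1}{\partial^*U}-\Trm{{\bf v}_2}{\partial^*U}]\,\Haus{N-1}\res\partial^*U$. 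The contributions on $U^1\cap\partial U$ and $U^0\cap\partial U$ must be shown to be absorbed by $\chi_{U^1}\Div{\bf v}_1$ and $\chi_{U^0}\Div{\bf v}_2$, using that ${\bf v}_2=0$ near density-one points and ${\bf v}_1=0$ near density-zero points in the weak sense.
\end{enumerate}

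The main obstacle is this last bookkeeping on the topological boundary, where $\partial U\setminus\partial^*U$ can carry positive $\Haus{N-1}$ measure. My plan there is to rely on the cited gluing theorem rather than redo it, so that the tensorial content of the statement reduces entirely to Step~1.
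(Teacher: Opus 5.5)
Your Steps~1--2 are exactly the paper's proof, which simply applies the complementary-domain gluing theorem \cite[Theorem~5.3]{ComiPayne} with $\hat F_i={\bf v}_i$ after obtaining ${\bf v}_i\in\DMloc$ from Theorem~\ref{thm:main}, as in Theorem~\ref{extension}. One caveat applies to your write-up and the paper's alike: the conclusion forces ${\bf v}_1\in L^\infty(U)$ with $|\Div{\bf v}_1|(U)<\infty$, and similarly for ${\bf v}_2$ on a one-sided neighbourhood of $\partial U$; this holds when $\MA_1,\bfu_1$ are bounded with finite divergence and variation on all of $U$, as in the application in Theorem~\ref{t:GGw}, but your interior estimate on $V\Subset U$ does not give it. Your optional Step~3(2) would fail as written, since $\Haus{N-1}(\partial U)<\infty$ does not bound $\liminf_{\delta\to0}\delta^{-1}\Leb{N}(\{d(\cdot,\partial U)<\delta\})$ (tiny disjoint spheres centred at a countable compact set of lower box dimension $>N-1$ are a counterexample); the standard fix is to build the cutoffs from finite coverings of $\partial U$ by balls $B(x_i,r_i)$, $x_i\in\partial U$, $r_i<\delta$, with $\sum_i r_i^{N-1}\lesssim\Haus{N-1}(\partial U)+1$, which bounds $\int|\nabla\eta_\delta|\,dx$ directly.
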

\begin{proof} 
It will suffice to apply \cite[Theorem 5.3]
{ComiPayne} with $\hat{F}_i={\bf v}_i$, $i=1,2$. 
\end{proof}

By combining this gluing construction and the integration by parts formula \eqref{f:anz1}
we are now in a position to prove the following result. 

\begin{theorem}[Tensor Gauss--Green formula for weakly regular sets]\label{t:GGw}
Let
$\MA\in\DMAloc(\R^N;\mathbb{R}^{N\times N})$ and $\bfu\in BV_{\rm loc}(\R^N;\R^N)\cap L^\infty_{\rm loc}(\R^N;\R^N)$. Let $E\subset\R^N$ be a weakly regular set. 
Then the following Gauss--Green formula holds:
\begin{eqnarray}
& \displaystyle \int_{ E} \bfu^* \cdot d\DIV\MA + \int_{ E}(\MA:D\bfu)  =- \int_{\partial E} \bfu^+\cdot \Trpv{\MA}{\partial E} d\Haus{N-1}\,,\label{eq:gaussgreen145} 
\end{eqnarray}
where $ \Trpv{\MA}{\partial E}$
is the normal trace of \(\MA\) when \(\partial  E\) is oriented
with respect to the interior unit normal vector (which exists $\Haus{N-1}$-a.e. in $\partial E$, see \cite[Theorem 2.9]{ComiLeo}).
\end{theorem}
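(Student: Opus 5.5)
We plan to apply the gluing construction of Theorem~\ref{t:gluing} with $U=E$, $\Omega=\R^N$, $\MA_1=\MA$, $\bfu_1=\bfu$ on $E$, and $\MA_2=0$ (equivalently $\bfu_2=0$) outside $\overline E$. Here $E$ is weakly regular, so it is a bounded open set of finite perimeter with $\Haus{N-1}(\partial E)=\Haus{N-1}(\partial^*E)<\infty$. The glued field is then ${\bf v}=\chi_E\,\MA^T\bfu$. By Theorem~\ref{thm:main}, $\MA^T\bfu\in\DMloc$. Theorem~\ref{t:gluing} then gives ${\bf v}\in\DMloc(\R^N;\R^N)$ with
\[
\Div{\bf v}=\chi_{E^1}\Div(\MA^T\bfu)+\Trp{\MA^T\bfu}{\partial^*E}\,\Haus{N-1}\res\partial^*E .
\]
The trace of ${\bf v}_2\equiv0$ vanishes, and we use that $E$ open implies $E\subset E^1$.

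Since ${\bf v}$ has compact support, $\Div{\bf v}(\R^N)=0$ by \cite[Lemma 3.1]{ComiPayne}. Hence
\[
\int_{E^1} d\Div(\MA^T\bfu)=-\int_{\partial^*E}\Trp{\MA^T\bfu}{\partial^*E}\,d\Haus{N-1}.
\]
By Proposition~\ref{p:tracesb} applied on $\Sigma=\partial^*E$, the integrand equals $\bfu^+\cdot\Trpv{\MA}{\partial^*E}$ $\Haus{N-1}$-a.e. On the left we insert the Leibniz rule \eqref{f:anz1} from Theorem~\ref{thm:main}: $\Div(\MA^T\bfu)=\bfu^*\cdot\DIV\MA+(\MA:D\bfu)$. (Boundedness of $E$ lets us localize $\MA,\bfu$ to global $\DMA$, $BV\cap L^\infty$ objects as in the proof of Theorem~\ref{main}.) This gives the formula with domain $E^1$ and boundary $\partial^*E$.

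It remains to replace $E^1$ by $E$ and $\partial^*E$ by $\partial E$. This is the main obstacle, and it is where weak regularity enters. Since $E$ is open, $E^1=E\cup(E^1\cap\partial E)$. Since $\partial^*E\subset\partial E$ and the two have equal finite $\Haus{N-1}$ measure, $\Haus{N-1}(\partial E\setminus\partial^*E)=0$. By Proposition~\ref{prop:absolutecont}, $|\DIV\MA|(\partial E\setminus\partial^*E)=0$. By \eqref{eq:abscont} and the fact that $|D\bfu|\ll\Haus{N-1}$, also $|(\MA:D\bfu)|(\partial E\setminus\partial^*E)=0$. As $E^1\cap\partial^*E=\emptyset$, we get $E^1\cap\partial E\subset\partial E\setminus\partial^*E$, so the integrals over $E^1$ and over $E$ coincide.

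On the boundary side, the interior normal and the trace on $\partial E$ are defined $\Haus{N-1}$-a.e. (by \cite[Theorem 2.9]{ComiLeo} and the rectifiability of $\partial E$). They agree with those on $\partial^*E$ up to an $\Haus{N-1}$-null set, by the localization property of normal traces. So the boundary integral over $\partial^*E$ equals the one over $\partial E$, which yields \eqref{eq:gaussgreen145}. The delicate check is that the orientation in Theorem~\ref{t:gluing} (via $\partial^*U$ with interior normal) matches $\Tr^+$ as used in Proposition~\ref{p:tracesb}, so that the sign comes out as $-\int\bfu^+\cdot\Trpv{\MA}{\partial E}$. We would verify this against the analogous computation \eqref{nono1} in the proof of Theorem~\ref{main}.
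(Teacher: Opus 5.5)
Your proposal is correct and follows essentially the same route as the paper: you extend $\MA^T\bfu$ by zero via Theorem~\ref{t:gluing}, use that a compactly supported field has divergence of zero total mass, identify the trace with $\bfu^+\cdot\Trpv{\MA}{\partial E}$ by Proposition~\ref{p:tracesb}, and insert the Leibniz rule \eqref{f:anz1}. The only difference is where weak regularity enters. The paper invokes $\Haus{N-1}(\partial E\setminus\partial^*E)=0$, and hence $\Haus{N-1}(E^1\setminus E)=0$, before gluing, and leaves implicit why this lets $E^1$ be replaced by $E$. You do the replacement afterwards and justify it explicitly, via the absolute continuity of $\DIV\MA$ and $(\MA:D\bfu)$ with respect to $\Haus{N-1}$.
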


\begin{proof}
Since $ E$ is a set of finite perimeter,
it holds $\partial^* E \subseteq \partial E$,
hence \eqref{eq:weaklyreg} 
implies that
$\Haus{N-1}(\partial E\setminus\partial^* E) = 0$.
Consequently,
\begin{equation}\label{f:incl}
\Haus{N-1}\res \partial E = \Haus{N-1}\res \partial^* E,
\quad
\Haus{N-1}( E^1\setminus E) = 0,
\quad
\Haus{N-1}( E^0\setminus(\R^N\setminus\overline{ E})) = 0.
\end{equation}

Let us consider the vector field
\[
{\bf v}(x) :=\chi_ E(x)\MA^T(x)\bfu(x)=
\begin{cases}
{\bf v}_1(x) := \MA^T(x)\bfu(x), 
&\text{if}\ x\in E,\\
{\bf 0},
&\text{if}\ x\in \R^N\setminus E\,.
\end{cases}
\]
By \eqref{f:incl} we may apply Theorem \ref{t:gluing}, so that ${\bf v}\in \DMloc(\R^N)$ and it is compactly supported. Hence,
\[
0=\Div{\bf v}= \chi_{ E} \Div{\bf v}_1 + \Trp{{\bf v}_1}{\partial E}\,\Haus{N-1}\res\partial E.
\]
Therefore, by Proposition \ref{p:tracesb}, we obtain
\begin{equation}\label{nono}
\int_{ E} \, d\Div(\MA^T\bfu) =-\int_{\partial  E}\bfu^+\cdot\Trpv{\MA}{\partial E}\ d\Haus{N-1}.
\end{equation}
Now, we write the left-hand side of \eqref{nono} using the pairing $(\MA:D\bfu)$, and, applying the integration by parts formula \eqref{f:anz1}, we obtain
$$
\int_{ E} \, d(\bfu^* \cdot \DIV\MA + (\MA: D\bfu)) =-\int_{\partial  E}\bfu^+\cdot\Trpv{\MA}{\partial E}\ d\Haus{N-1},
$$
which corresponds to \eqref{eq:gaussgreen145}. 
\end{proof}

\section{Gauss--Green Formula up to the boundary on rough open sets}

In \cite{ChLiTo}, the authors consider the general class of \emph{rough} domains 
by imposing a weaker notion of regularity on $\Omega$. Specifically, the open set $\Omega$ is assumed to satisfy
\begin{equation}\label{tata}
\Haus{N-1}(\partial\Omega \setminus \Omega^0) < +\infty,
\end{equation}
where $\Omega^0$ denotes the measure-theoretic exterior of $\Omega$. It is shown (see \cite[Theorem 3.1]{ChLiTo}) that the sets satisfying \eqref{tata} are exactly those that can be approximated from the interior by smooth sets with uniformly bounded perimeter.  
Moreover, as proved in \cite[Theorem 4.2]{ChLiTo}, every bounded open set satisfying \eqref{tata} is an \emph{extension domain} for bounded divergence-measure fields. That is, for every $\A \in \DMloc(\R^N;\R^N)$, the extension 
\[
\widetilde \A(x) := \chi_\Omega(x) \A(x)
\] 
defines a divergence-measure field in $\R^N$ with $|\Div \widetilde \A|(\R^N) < +\infty$. Furthermore, open sets satisfying \eqref{tata} are also extension domains for bounded $BV$ functions (see \cite[Corollary 7.2]{ChLiTo}). Finally, each weakly regular set is a rough set, since
\begin{equation*}
\Haus{N-1}(\partial \Omega \setminus \Omega^0) \leq \Haus{N-1}(\partial \Omega) = \Haus{N-1}(\partial^* \Omega)<+\infty\,.
\end{equation*}

The following result is based on \cite[Theorem 5.2]{ChLiTo}.

\begin{theorem}
\label{t:GGrough}
Let
$\MA\in\DMAloc(\R^N;\mathbb{R}^{N\times N})$ and $\bfu\in BV_{\rm loc}(\R^N;\R^N)\cap L^\infty_{\rm loc}(\R^N;\R^N)$. Let $ E\subset\R^N$ be a bounded open set of finite perimeter satisfying \eqref{tata}. Then there exists $g\in L^\infty(\partial E\setminus  E^0;\Haus{N-1})$ such that the following Gauss--Green formula holds
\begin{align}\label{eq:gaussgreen1333} 
& \displaystyle \int_{ E} \bfu^* \cdot d\DIV\MA + \int_{ E}(\A:D\bfu)\\
=&\displaystyle - \int_{\partial^* E} \bfu^+\cdot \Trpv{\MA}{\partial^* E}\ d\Haus{N-1}
- \int_{\partial E\cap E^1} g(x)\ d\Haus{N-1},\nonumber
\end{align}
where $ \Trpv{\MA}{\partial^* E}$
is the normal trace of \(\MA\) when \(\partial^*  E\) is oriented
with respect to the interior unit normal vector.
\end{theorem}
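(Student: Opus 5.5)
The plan is to reduce to the scalar Gauss--Green formula of \cite[Theorem 5.2]{ChLiTo} for rough domains, applied to the vector field $\B:=\MA^T\bfu$. Then we translate the result back through the Leibniz rule \eqref{f:anz1} and the trace representation of Proposition~\ref{p:tracesb}.

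First, since $E$ is bounded, I would localize exactly as in the proof of Theorem~\ref{main}. This gives $\MA\in\DMA(\R^N;\mathbb{R}^{N\times N})$ and $\bfu\in BV(\R^N;\R^N)\cap L^\infty(\R^N;\R^N)$, e.g.\ after multiplying by a cut-off equal to $1$ near $\overline E$. By Theorem~\ref{thm:main}, $\B=\MA^T\bfu\in\DM(\R^N;\R^N)$, with $\Div\B=\bfu^*\cdot\DIV\MA+(\MA:D\bfu)$ as measures. Next I would invoke \cite[Theorem 5.2]{ChLiTo} for the scalar divergence-measure field $\B$ on the bounded open set $E$ satisfying \eqref{tata}. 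As I recall it, this result produces a bounded function $g\in L^\infty(\partial E\setminus E^0;\Haus{N-1})$, arising as the weak limit of normal components $\B\cdot\bm\nu$ along interior smooth approximations of $E$, with $\|g\|_\infty\le\|\B\|_\infty$. It yields
\[
\int_E d\Div\B=-\int_{\partial E\setminus E^0} g\,d\Haus{N-1}
\]
with the interior normal convention. Using Federer's theorem, $\partial E\setminus E^0$ splits, up to an $\Haus{N-1}$-null set, into $\partial^*E$ and $\partial E\cap E^1$. Here one uses $\partial E\cap\partial^eE\subset\partial^*E$ up to null sets, and that $E$ open gives $E\subset E^1$.

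The key step, and the main obstacle, is identifying $g$ on $\partial^*E$ with $\Trp{\B}{\partial^*E}$. One route is to compare with the already proved formula \eqref{nono1}, that is, Step 1 of Theorem~\ref{main}. Since $E$ is open, $E\subset E^1$, and $E^1\setminus E=\partial E\cap E^1$ (which has finite $\Haus{N-1}$ measure). Hence
\[
\int_E d\Div\B=\int_{E^1}d\Div\B-\int_{\partial E\cap E^1}d\Div\B .
\]
The last term is absolutely continuous with respect to $\Haus{N-1}$ on a set of finite $\Haus{N-1}$ measure, by Proposition~\ref{prop:absolutecont} and the rectifiable-part representation \eqref{mmm}. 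So it equals $\int_{\partial E\cap E^1}h\,d\Haus{N-1}$ for some $h$. To justify that $h$ is bounded, I would cover the rectifiable part of $\partial E\cap E^1$ by $C^1$ pieces and use the trace bounds $|\Tr^\pm(\B,\cdot)|\le\|\B\|_\infty$ together with \eqref{mmm}. The purely unrectifiable part carries no $|\Div\B|$ mass in the relevant sense; this needs checking, and is where I expect the delicate point, possibly needing the structure of $\jump{\B}$ from \cite[Proposition~2.5]{ADM}. One can then simply \emph{define} the $g$ on $\partial E\cap E^1$ to be $h$, independently of \cite{ChLiTo}.

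Combining this with \eqref{nono1} gives
\[
\int_E d\Div\B=-\int_{\partial^*E}\bfu^+\cdot\Trpv{\MA}{\partial^*E}\,d\Haus{N-1}-\int_{\partial E\cap E^1}g\,d\Haus{N-1},
\]
where the boundary term on $\partial^*E$ comes from Proposition~\ref{p:tracesb}. Finally, substituting \eqref{f:anz1} into the left-hand side gives \eqref{eq:gaussgreen1333}. Formula \eqref{eq:gaussgreen1333} writes $(\A:D\bfu)$, which I read as $(\MA:D\bfu)$. The weakly regular case of Theorem~\ref{t:GGroughintro} follows because then $\Haus{N-1}(\partial E\cap E^1)=0$ and $\partial E=\partial^*E$ up to null sets.
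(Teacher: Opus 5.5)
Your main reduction is correct, and it differs from the paper's route. Since $E$ is open, $E\subset E^1$ and $E^1\setminus E=\partial E\cap E^1$. Subtracting $\Div\B\res(\partial E\cap E^1)=h\,\Haus{N-1}\res(\partial E\cap E^1)$ from \eqref{nono1} and using \eqref{f:anz1} gives \eqref{eq:gaussgreen1333} with $g:=h$, without \cite{ChLiTo}; your preliminary appeal to \cite[Theorem 5.2]{ChLiTo} then plays no role. The paper instead takes $g$, together with its boundedness, from \cite[Theorem 5.2]{ChLiTo}.

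\textbf{The gap.} The gap is exactly that boundedness, which is part of the statement. Radon--Nikodym only gives $h\in L^1(\partial E\cap E^1;\Haus{N-1})$. Your plan for the $L^\infty$ bound rests on the claim that the purely unrectifiable part of $\partial E\cap E^1$ carries no $|\Div\B|$-mass, and this is false in general. By the Phuc--Torres characterization, a nonnegative measure $\mu$ is the divergence of a bounded field if and only if $\mu(B_r(x))\le Cr^{N-1}$ for all balls. So if $K\subset B_1\subset\R^2$ is the four-corner Cantor set (purely $1$-unrectifiable and Ahlfors regular), then $\Haus{1}\res K=\Div\mathbf F$ for some bounded $\mathbf F$. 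Take $\MA$ with first row $\mathbf F$ and second row zero, $\bfu\equiv e_1$, and $E=B_2\setminus K$. Then:
\begin{itemize}
\item $E$ is a bounded open set of finite perimeter satisfying \eqref{tata}, with $\partial E\cap E^1=K$;
\item the left-hand side of \eqref{eq:gaussgreen1333} vanishes;
\item \eqref{nono1} gives $-\int_{\partial^*E}\bfu^+\cdot\Trpv{\MA}{\partial^*E}\,d\Haus{1}=\Haus{1}(K)$.
\end{itemize}
Hence $\int_K g\,d\Haus{1}=\Haus{1}(K)>0$. So $\jump{\B}$ need not be rectifiable, and no covering of $\partial E\cap E^1$ by $C^1$ pieces can control $h$.

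\textbf{How to close it.} The bound is still true, via a density argument that ignores rectifiability. Put $S:=\partial E\cap E^1$.
\begin{itemize}
\item Testing $\Div\B$ with radial Lipschitz cut-offs increasing to $\chi_{B_r(x)}$ gives $|\Div\B(B_r(x))|\le N\omega_N\|\B\|_\infty r^{N-1}$ for every ball.
\item For $\Haus{N-1}$-a.e.\ $x\in S$, Besicovitch differentiation gives $h(x)=\lim_{r\to0}\Div\B(B_r(x)\cap S)/\Haus{N-1}(S\cap B_r(x))$.
\item For $\Haus{N-1}$-a.e.\ $x\in S$, $|\Div\B|(B_r(x)\setminus S)=o(r^{N-1})$. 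This follows from \cite[Theorem 2.56 and (2.41)]{AFP}, as in (b) of Proposition~\ref{p:tracesb}, because $|\Div\B|\res(\R^N\setminus S)$ does not charge $S$.
\item Since $\Haus{N-1}(S)<\infty$, for $\Haus{N-1}$-a.e.\ $x\in S$ we have $\limsup_{r\to0}\Haus{N-1}(S\cap B_r(x))/(\omega_{N-1}r^{N-1})\ge 2^{1-N}$.
\end{itemize}
Evaluating along radii that realize this $\limsup$ gives $|h(x)|\le 2^{N-1}N\omega_N\|\B\|_\infty/\omega_{N-1}$, which closes the gap.

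Alternatively, you could keep the bounded trace from \cite{ChLiTo}. In that case you need its localized, test-function form to identify it with $h$ on $S$; the total-mass identity you quoted cannot determine $g$ pointwise.
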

\begin{proof}
Let ${\bf v}(x):=
({\MA}^T{\bfu})(x)$, and $\widetilde{\bf v}$ denote the extension of ${\bf v}$  to $\R^N$. Then $\widetilde{\bf v}\in \DMloc(\R^N;\R^N)$. By \cite[Theorem 5.2]{ChLiTo} there exists a trace $g\in L^\infty(\partial E\setminus  E^0;\Haus{N-1})$ such that 
$$
\int_{\partial E\setminus E^0} g(x) d\Haus{N-1}=
- 2\int_{\partial^* E} (\widetilde{\bf v}, D\chi_ E)
- \int_{\partial E\cap E^1}\,d\Div\widetilde{\bf v},
$$
where $(\widetilde{\bf v}, D\chi_ E)$ is the pairing defined in \eqref{f:pairing}.

Note that $\partial E\setminus E^0=(\partial E\cap E^1)\cup \partial^* E$ up to a $\Haus{N-1}$-null set.
By Proposition \ref{p:tracesb} we have
\begin{equation*}
\begin{split}
(\widetilde{\bf v}, D\chi_ E) & = \Tr^*(\widetilde{\bf v},\partial^* E)\Haus{N-1}\res\partial^* E \\
& =\frac12\Tr^+({\bf v},\partial^* E)\Haus{N-1}\res\partial^* E \\
& =\frac12\bfu^+\cdot \Trpv{\MA}{\partial^* E} \Haus{N-1}\res\partial^* E.
\end{split}
\end{equation*}
Moreover, choosing $\varphi\in C^1_c(\R^N)$ with $\varphi=1$ on $\overline E$ in \cite[Theorem 5.2]{ChLiTo}, we obtain the following
Gauss--Green formula up to the boundary:
\begin{equation*}\label{eq:gaussgreen13334} 
 \displaystyle \int_{ E} \,d\Div{\bf v}
=\displaystyle - \int_{\partial^* E} \bfu^+\cdot \Trpv{\MA}{\partial^* E} d\Haus{N-1}
- \int_{\partial E\cap E^1} g(x) d\Haus{N-1},
\end{equation*}
 where $g$ is the density of the measure $\Div\widetilde{\bf v}=\Div(\widetilde{\A^T\bfu})$ with respect to $\Haus{N-1}\res(\partial E\cap E^1)$.
Now, by using formula \eqref{f:anz1} and the new pairing $(\MA:D\bfu)$ for vector-valued $BV$ functions, %$\bfu$,  
the identity \eqref{eq:gaussgreen1333} follows.
\end{proof}

\begin{remark}
If $ E$ is weakly regular, then $\Haus{N-1}(\partial E\setminus\partial^* E) = 0$. In particular, this entails that $\Haus{N-1}(\partial E\cap E^1)=\Haus{N-1}(\partial E\cap E^0)=0$, so that \eqref{eq:gaussgreen1333} reduces to \eqref{eq:gaussgreen145}. 
\end{remark}

\def\cprime{$'$}

\end{document}